\def\NN{{\NZQ N}}
\def\ZZ{{\NZQ Z}}
\def\RR{{\NZQ R}}
\def\CC{{\NZQ C}}
\def\PP{{\NZQ P}}
\def\frk{\frak}               
\def\Phi{{\frk n}}
\def\Phi{{\frk N}}
\def\fU{{\mathcal U}}
\def\fQ{{\mathcal Q}}
\def\opn#1#2{\def#1{\operatorname{#2}}} 
\opn\chara{char} \opn\length{\ell} \opn\pd{pd} \opn\rk{rk}
\opn\projdim{proj\,dim} \opn\injdim{inj\,dim} \opn\rank{rank}
\opn\spn{span}\opn\Seg{Seg}
\opn\depth{depth} \opn\grade{grade} \opn\height{height}
\opn\embdim{emb\,dim} \opn\codim{codim}
\opn\Tr{Tr} \opn\bigrank{big\,rank}
\opn\superheight{superheight}\opn\lcm{lcm}
\opn\trdeg{tr\,deg}
\opn\reg{reg} \opn\lreg{lreg} \opn\ini{in} \opn\lpd{lpd}
\opn\size{size}\opn\bigsize{bigsize}
\opn\cosize{cosize}\opn\bigcosize{bigcosize}
\opn\sdepth{sdepth}\opn\sreg{sreg}
\opn\link{link}\opn\fdepth{fdepth} \opn\trdeg{trdeg} \opn\mod{mod}
\opn\spann{span}
\opn\div{div} \opn\Div{Div} \opn\cl{cl} \opn\Cl{Cl}
\opn\Spec{Spec} \opn\Supp{Supp} \opn\supp{supp} \opn\Sing{Sing}
\opn\Ass{Ass} \opn\Min{Min}\opn\Mon{Mon} \opn\dstab{dstab} \opn\astab{astab}
\opn\Syz{Syz}
\opn\Ann{Ann} \opn\Rad{Rad} \opn\Soc{Soc} \opn\Aut{Aut}
\opn\Im{Im} \opn\Ker{Ker} \opn\Coker{Coker} \opn\Am{Am}
\opn\Hom{Hom} \opn\Tor{Tor} \opn\Ext{Ext} \opn\End{End}
\opn\Aut{Aut} \opn\id{id}
\opn\nat{nat}
\opn\pff{pf}
\opn\Pf{Pf} \opn\GL{GL} \opn\SL{SL} \opn\mod{mod} \opn\ord{ord}
\opn\Gin{Gin} \opn\Hilb{Hilb}\opn\sort{sort}
\opn\S{S} \opn\dim{dim} \opn\supp{supp}\opn\trdeg{trdeg}\opn\sort{sort}
\opn\aff{aff} \opn\con{conv} \opn\relint{relint} \opn\st{st}
\opn\lk{lk} \opn\cn{cn} \opn\core{core} \opn\vol{vol}
\opn\link{link} \opn\star{star}\opn\lex{lex}
\opn\conv{conv} \opn\Ehr{Ehr}\opn\Pic{Pic}
\opn\gr{gr}
\def\pot#1#2{#1[\kern-0.28ex[#2]\kern-0.28ex]}
\opn\dirlim{\underrightarrow{\lim}}
\opn\inivlim{\underleftarrow{\lim}}
\let\to=\rightarrow
\def\Implies{\ifmmode\Longrightarrow \else
        \unskip${}\Longrightarrow{}$\ignorespaces\fi}
\def\implies{\ifmmode\Rightarrow \else
        \unskip${}\Rightarrow{}$\ignorespaces\fi}
\def\iff{\ifmmode\Longleftrightarrow \else
        \unskip${}\Longleftrightarrow{}$\ignorespaces\fi}
\def\CQ{CQ}
\newtheorem{Theorem}{Theorem}[section]
 \newtheorem{Lemma}[Theorem]{Lemma}
 \newtheorem{Corollary}[Theorem]{Corollary}
 \newtheorem{Proposition}[Theorem]{Proposition}
 \newtheorem{Remark}[Theorem]{Remark}
 \newtheorem{Example}[Theorem]{Example}
 \newtheorem{Definition}[Theorem]{Definition}
 \newtheorem{Conjecture}[Theorem]{Conjecture}
\DeclareMathOperator{\diag}{diag}
\def\CC{\mathbb{C}}
\def\RR{\mathbb{R}}
\def\ZZ{\mathbb{Z}}
\def\PP{\mathbb{P}}
\def\NN{\mathbb{N}}
\begin{document}
 \title {Geometry of the Gaussian graphical model of the cycle}
\keywords{Gaussian graphical model, complete quadrics, intersection theory, projective degree }
 
 \author{Rodica Andreea Dinu}
\address{%
	University of Konstanz, Fachbereich Mathematik und Statistik, Fach D 197 D-78457 Konstanz, Germany, and Institute of Mathematics ``Simion Stoilow" of the Romanian Academy, Calea Grivitei 21, 010702, Bucharest, Romania}
	\email{rodica.dinu@uni-konstanz.de}

 \author{Mateusz Micha{\l}ek}
 \address{%
	University of Konstanz, Fachbereich Mathematik und Statistik, Fach D 197 D-78457 Konstanz, Germany}
	\email{mateusz.michalek@uni-konstanz.de}
 
\author{Martin Vodi\v{c}ka}
\address{%
	Max Planck Institute for Mathematics in the Sciences, Inselstrasse 22, 041 03 Leipzig, Germany and University of Konstanz, Fachbereich Mathematik und Statistik, Konstanz, Germany}
	\email{martin.vodicka@uni-konstanz.de}

\thanks{MM is supported by the DFG grant nr 467575307, RD is supported by the Alexander von Humboldt Foundation.}
\maketitle

 \begin{abstract}
We prove a conjecture due to Sturmfels and Uhler concerning the degree of the projective variety associated to the Gaussian graphical model of the cycle. We involve new methods based on the intersection theory in the space of complete quadrics. 
 \end{abstract}

 \maketitle

\section{Introduction}

Algebraic statistics employs techniques in algebraic geometry, commutative algebra and combinatorics, to address problems in statistics and its applications. The philosophy of algebraic statistics is that statistical models are algebraic varieties.\\
\indent One of the central problems of interest for statisticians is the following: Given data obtained after an experiment, coming from a probability distribution in our model, estimate the parameters that best explain the observed data. This problem is usually approached by maximizing the likelihood function for the given data, i.e. by computing the maximum likelihood estimate MLE. The complexity of finding  the MLE is measured by the maximum likelihood degree (ML-degree). The study of the ML-degree is a very active topic in algebraic statistics, see for example \cite{carlos, huh, huh2, orlando, uhler, sonja, bsp, Seth}.

{\it A multivariate Gaussian distribution} in $\RR^n$ is determined by the mean vector $\mu \in \RR^n$ and a positive semidefinite $n\times n$ matrix $\Sigma$ called the \emph{covariance matrix}. Its inverse matrix $K=\Sigma^{-1}$ is also positive semidefinite and known as the \emph{concentration matrix} of the distribution. We assume that $K$ belongs to a fixed linear subspace $\Lambda$, in other words the concentration matrix can be written as a linear combination $K=\lambda_1 K_1+\dots+\lambda_d K_d$ of some fixed linearly independent matrices $K_1, \dots, K_d$. In this case, the statistical model is called a \emph{linear concentration model}. The linear concentration model associated to $\Lambda$ is the variety $\Lambda^{-1}:=\overline{\{K^{-1}:K\in \Lambda\}}$. This statistical model was introduced by Anderson \cite{anderson}.


Let $\mathbb{S}^n$ denote the space of $n\times n$ symmetric matrices and the projection $\pi: \mathbb{S}^n \rightarrow \mathbb{S}^n/{\Lambda^{\perp}}$.
We assume that $\Lambda$ intersects the cone of positive definite matrices $PD_n$. The cone of concentration matrices is defined as $K_{\Lambda}= \Lambda \cap PD_n$. The covariance matrices allowed by the model form a semialgebraic set $K_{\Lambda}^{-1}=\{M^{-1}\:\; M\in K_{\Lambda}\}$. Both $K_{\Lambda}$ and $K_{\Lambda}^{-1}$ are subsets of the space of symmetric $n\times n$ matrices.

\begin{Theorem}(\cite[Corollary 1]{bc})
For the linear concentration model, the MLE is given by the unique matrix $\Sigma_0 \in K_{\Lambda}^{-1}$ such that $\pi(\Sigma_0)=\pi(S)$.
\end{Theorem}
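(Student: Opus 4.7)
\emph{Proof plan.} The approach is the classical convex-analytic argument for exponential-family MLEs. Given a sample with empirical covariance matrix $S$, the Gaussian log-likelihood of the concentration matrix $K$ is, up to an additive constant,
\[
\ell(K) \;=\; \tfrac{1}{2}\log\det K \;-\; \tfrac{1}{2}\tr(SK),
\]
and the MLE is the maximizer of $\ell$ on the open convex set $K_\Lambda = \Lambda \cap PD_n$. I would first observe that $\log\det$ is strictly concave on $PD_n$: its Hessian at $K$ in direction $H$ is $-\tr\bigl((K^{-1}H)^2\bigr) = -\tr\bigl((K^{-1/2}HK^{-1/2})^2\bigr)$, which is strictly negative for every nonzero symmetric $H$. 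Since $K \mapsto \tr(SK)$ is linear, $\ell$ is strictly concave on the convex set $K_\Lambda$, so a maximizer, if it exists, is unique.

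Next I would translate the first-order condition through the quotient $\pi$. For $H \in \Lambda$,
\[
d\ell_K(H) \;=\; \tfrac{1}{2}\tr\bigl((K^{-1}-S)H\bigr).
\]
At an interior critical point this must vanish for every $H \in \Lambda$, which with respect to the trace pairing is exactly the condition $K^{-1}-S \in \Lambda^\perp$. Setting $\Sigma_0 := K^{-1}$ this becomes $\pi(\Sigma_0) = \pi(S)$, and by construction $\Sigma_0 \in K_\Lambda^{-1}$; conversely, every such $\Sigma_0$ gives a critical point, and strict concavity promotes it to the unique global maximum. So the theorem reduces to the existence of an interior maximizer.

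Finally, existence follows from coercivity of $\ell$ on $K_\Lambda$. Along any sequence $K_j \in K_\Lambda$ tending to $\partial PD_n$ one has $\log\det K_j \to -\infty$, while along $\|K_j\|\to\infty$ inside $\Lambda$ the term $\tr(SK_j)$ grows linearly and dominates the logarithmic term, provided $S$ pairs strictly positively with every nonzero direction of $\Lambda$ that meets $\overline{PD_n}$. The main obstacle is precisely this last positivity assumption: it is the well-known subtlety of MLE existence in Gaussian exponential families, which is settled in full generality in \cite{bc}. Under the (generic) hypothesis $S \in PD_n$, coercivity is immediate from $\tr(SK) \ge \lambda_{\min}(S)\,\tr(K)$, and the argument concludes.
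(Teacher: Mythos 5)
This statement is quoted in the paper as an external result (Corollary~1 of Sturmfels--Uhler) and is given no proof here, so there is nothing internal to compare against; your convex-analytic argument --- strict concavity of $\log\det$ on $PD_n$, the first-order condition $K^{-1}-S\in\Lambda^\perp$ translating into $\pi(\Sigma_0)=\pi(S)$, and uniqueness from strict concavity --- is precisely the standard proof underlying the cited result and is correct. The only caveat, which you yourself flag, is that existence of the maximizer is conditional (it is part of the hypotheses implicit in the statement that ``the MLE'' exists), and your treatment of the generic case $S\in PD_n$ is adequate for the way the theorem is used in this paper.
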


\begin{Definition}
The ML-degree of a model is the degree of the dominant rational map $\pi: \overline{K}_{\Lambda}^{-1} \dashrightarrow \PP(\mathbb{S}^n/{\Lambda^{\perp}})$.
\end{Definition}

The {\it Gaussian graphical model} is a special case of the linear concentration model, where $\Lambda\subset \mathbb{S}^n$ is defined by the vanishing of some entries of the matrix. For a positive integer $n$, let $C_n$ be the undirected cycle with vertex set $[n]$ and edges $(1,2), (2,3), \dots, (n-1, n), (n,1)$. The undirected Gaussian graphical model associated with the graph $C_n$ is the family of multivariate normal distributions with covariance matrix $\Sigma$ in the set
$$
\{\Sigma \in PD_n \:\; (\Sigma^{-1})_{ij} =0, (i,j) \notin E(C_n),i\neq j\}.
$$
The maximum likelihood estimation for this model is a matrix completion problem, see~\cite[Theorem 2.1.14]{drton}.

\begin{Example}
Let $n=5$. Then for the covariance matrix $\Sigma$, we have \[\Sigma^{-1}=
\begin{pmatrix}
a_1&b_1&0&0&b_5\\
b_1&a_2&b_2&0&0\\
0&b_2&a_3&b_3&0\\
0&0&b_3&a_4&b_4\\
b_5&0&0&b_4&a_5\\
\end{pmatrix}. \]
\end{Example}

It is known that, for a generic linear subspace $\Lambda \subset \mathbb{S}^n$, the ML-degree of the linear concentration model given by $\Lambda$ is given by the degree of the projective variety $\Lambda^{-1}$, see for example \cite[Theorem 1]{bc}, \cite[Corollary 2.6]{mateusz1}. However, whenever $\Lambda$ is not generic, and this is the case for the Gaussian graphical models, this equality does not necessarily hold. For a detailed relation, involving the Segre classes, we refer to \cite{Kathlen}.
In the case of $n$-cycle there are conjectured formulas for both ML-degree and degree of the variety $\Lambda^{-1}$,  \cite[Section 7]{drton},\cite[Conjecture 4.6]{bc}. In this article we focus on the latter, precisely we prove the following theorem which was conjectured by Sturmfels and Uhler.

\begin{Theorem}(Conjecture 4.6 \cite{bc})\label{variety}
The degree of the projective variety associated to the Gaussian graphical model described by the $n$-cycle is equal to
\[
\frac{n+2}{4}{2n \choose n}-3\cdot 2^{2n-3}.
\]
\end{Theorem}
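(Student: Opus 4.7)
The guiding principle is to transfer the computation from $\mathbb{P}(\mathbb{S}^n)$ to the wonderful compactification $CQ_n$ of the space of smooth quadrics in $\PP^{n-1}$, on which the rational inversion map becomes regular. Let $p, p^\vee : CQ_n \to \mathbb{P}(\mathbb{S}^n)$ denote the two natural evaluation maps (``original'' and ``dual'' quadric), and let $L_1,\dots,L_{n-1}$ be the characteristic divisor classes generating $\mathrm{Pic}(CQ_n)$. For a linear subspace $\Lambda \subset \mathbb{S}^n$ with projective linearization of dimension $d$, the degree of $\Lambda^{-1}$ in $\mathbb{P}(\mathbb{S}^n)$ is computed on $CQ_n$ as the intersection number
\[
\deg \Lambda^{-1} \;=\; p^\ast[\mathbb{P}(\Lambda)] \cdot (p^\vee)^\ast H^{d},
\]
where $H$ is the hyperplane class. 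For the cycle we have $d = 2n-1$, so the first step is to set up this identity precisely and identify the correct monomial in the $L_i$ representing $(p^\vee)^\ast H^{2n-1}$.

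For the $n$-cycle, $\mathbb{P}(\Lambda)$ is a complete intersection of $c = n(n-3)/2$ coordinate hyperplanes $\{K_{ij}=0\}$, one for each non-edge of $C_n$. Pulling each of these back to $CQ_n$ along $p$ gives a divisor class that, by the geometry of the boundary of $CQ_n$, decomposes as a combination of the characteristic classes $L_1,\dots,L_{n-1}$ together with boundary correction terms. The second step is to make this expansion explicit, using the known description of the Chow ring of $CQ_n$, so that the degree is reduced to a sum of intersection monomials $L_1^{a_1}\cdots L_{n-1}^{a_{n-1}}$ with $\sum a_i = \dim CQ_n$. These monomials are the classical ``characteristic numbers'' of complete quadrics and admit well-known evaluations.

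The third step is the combinatorial evaluation of the resulting sum. Here I would exploit the cyclic symmetry and the sparse pattern of non-edges of $C_n$ (whose complement graph has maximum degree $n-3$ and decomposes neatly into ``vertex stars''). I expect this to reduce the global sum to a transfer-matrix-style product of local contributions, or alternatively to a single-variable generating function whose $n$-th coefficient I then match to the conjectured formula. As a backup I would try to establish a two-term recursion relating $\deg(C_n)$ to $\deg(C_{n-1})$ and $\deg(P_{n-1})$ (the path model) by degenerating one edge, and verify that the conjectured formula satisfies the same recursion with the same initial values.

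The main obstacle I expect is the final combinatorial identity. The closed form $\frac{n+2}{4}\binom{2n}{n} - 3\cdot 2^{2n-3}$ is a difference of two terms of different growth orders ($\sim n^{1/2}\cdot 4^n$ versus $\sim 4^n$), which strongly suggests a ``principal term minus degenerate correction'' structure: the binomial part should come from the generic linear concentration model degree, while the $3\cdot 2^{2n-3}$ captures the failure of genericity caused by the cycle's special combinatorics. Extracting this split cleanly from the intersection-theoretic sum, and in particular accounting exactly for the boundary contributions in the Chow-ring expansion, is the decisive technical point.
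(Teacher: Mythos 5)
Your overall framework---passing to the space of complete quadrics, where inversion becomes the regular map $\pi_{n-1}$, and computing the degree as an intersection number against powers of $L_{n-1}$---is indeed the one the paper uses. But your Step 1 identity is not correct for the cycle model, and this is not a cosmetic issue. The pullback $p^\ast[\PP(\Lambda)]$ is the class of the \emph{total} transform, i.e.\ $L_1^{c}$ with $c=n(n-3)/2$, and the number $L_1^{c}L_{n-1}^{2n-1}$ is the answer for a \emph{generic} subspace of the same dimension (a Lascoux number). What computes $\deg L(C_n)^{-1}$ is $[X(C_n)]\,L_{n-1}^{2n-1}$, where $X(C_n)$ is the \emph{strict} transform of $L(C_n)$; the whole difficulty of the theorem is that $L(C_n)$ meets the rank strata in excess dimension, so $[X(C_n)]\neq L_1^{c}$. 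Your Step 2 does not repair this: each individual coordinate hyperplane $\{K_{ij}=0\}$ contains none of the blow-up centers, so its strict transform \emph{is} just $L_1$ with no boundary corrections; the discrepancy arises only from the excess intersection of the product $L_1^{c}$ along the loci where $L(C_n)$ meets the rank-$\le r$ strata. There is no per-divisor expansion that accounts for this.

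Consequently the decisive technical point is not, as you predict, the final combinatorial identity, but the geometry you have skipped: one must classify the low-rank matrices in $L(C_n)$ (they acquire a cyclic block structure), describe $X(C_n)\cap E_r$ explicitly (e.g.\ $X(C_n)\cap E_1$ has $n$ components, each an $X(C_{n-1})$-bundle over $\PP^1$, and $X(C_n)\cap E_{n-2}$ is birational to a symmetric square of the quotient bundle on a Grassmannian), and from this derive both a recursion in \emph{two} indices for $F(n,r)=[X(C_n)]S_1\cdots S_r L_{n-1}^{2n-1-r}$ and a closed Schubert-calculus evaluation of the auxiliary products $[X(C_n)]S_1\cdots S_rS_{n-2}L_{n-1}^{2n-2-r}$. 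Your proposed two-term recursion relating $\deg(C_n)$ to $\deg(C_{n-1})$ and a path model is in the right spirit (edge contraction does drive the paper's recursion via $E_1$), but by itself it is underdetermined: the recursion only closes after introducing the divisors $S_1,\dots,S_r,S_{n-2}$ and the associated Grassmannian computation, none of which is present in your plan. As written, the proposal would evaluate to the generic degree $\frac{n+2}{4}\binom{2n}{n}$-type leading term without any mechanism producing the correction $-3\cdot 2^{2n-3}$.
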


We prove Theorem~\ref{variety} by using a new approach based on the intersection theory of the variety of complete quadrics, due to Micha{\l}ek, Monin and Wi\'sniewski \cite{mateusz1}; see also \cite{mateusz2}. So far the variety of complete quadrics was only used to provide information about generic linear concentration models. Our article provides methods to address concrete models, which are most sought for in algebraic statistics.

In Section~\ref{quadrics} we give an introduction to the variety of complete quadrics. Let $L(C_n)$ be the linear subspace given by the $n$-cycle. We consider the closure of the variety obtained by inverting all the regular matrices in $L(C_n)$, and we denote it by $L(C_n)^{-1}$. In Corollary~\ref{CQ_formula}, we express the degree of $L(C_n)^{-1}$ in terms of its class in the cohomology ring of the variety of complete quadrics. In Section~\ref{low rank} we provide the description of low-rank matrices in $L(C_n)$. In Section~\ref{geometry} we work with the strict transform $X(C_n)$ of the space $L(C_n)$ in the space of complete quadrics. We geometrically describe its intersections with divisors in the space of complete quadrics. We use the results from Section~\ref{geometry} in Section \ref{section:proof} to prove formulas in the Chow ring of the space of complete quadrics, which will allow us to conclude the desired result.


\section{Complete quadrics}\label{quadrics}
We start by following the approach of \cite{mateusz2}, where the authors compute the ML-degree of general linear concentration model as intersection product in the space of complete quadrics. Here we give a brief introduction to the space of complete quadrics. More details can be consulted in \cite{laksov, thaddeus, massarenti, concesi}.

First, we pass to the complex projective space of symmetric matrices. Let $V$ be an $n$-dimensional complex vector space. We will always assume that $n\ge 3$. Then the space of $n\times n$ symmetric matrices is $\PP(S^2 (V))$.  The space of complete quadrics is a compactification of the space of regular symmetric matrices $\PP(S^2 (V))^\circ$. We begin by giving two equivalent definitions of the space of complete quadrics.
For $A\in S^2 (V)$ we denote by $\bigwedge^r A$ the corresponding form in $S^2 (\bigwedge^r V)$. In coordinates, $\bigwedge^r A$ is the matrix given by all $r\times r$ minors of $A$. In particular, $\bigwedge^{n-1} A$ is the inverse of $A$ (up to a scalar multiple).

\begin{Definition}\label{CQ1}
The space of complete quadrics $\CQ(V)$ is the closure of\\ $\phi(\PP(S^2(V))^\circ)$, where
$$
\phi : \PP(S^2(V))^\circ  \to \PP(S^2(V)) \times \PP\left( S^2\left(V\wedge V\right) \right) \times\ldots\times \PP \left(S^2\left(\bigwedge^{n-1} V\right) \right)
$$
is given by
$$
[A]\mapsto \left([A],[\bigwedge^2 A],\ldots, [\bigwedge^{n-1} A]\right).
$$
\end{Definition}

 The space of complete quadrics comes with natural projections $$\pi_i:\CQ(V)\rightarrow\PP(S^2(\bigwedge^{i} V)).$$

An equivalent definition is as follows.
\begin{Definition}\label{CQ2}
The space of complete quadrics $\CQ(V)$ is the successive blow-up of $\PP(S^2(V))$:
$$
\CQ(V) = Bl_{\widetilde D^{n-1}} Bl_{\widetilde D^{n-2}} \ldots Bl_{D^1} \PP(S^2(V)),
$$
where $\widetilde D^{i} $ is the strict transform of the space of symmetric matrices of rank at most $i$
under the previous blow-ups.
\end{Definition}

We will be interested in divisors on $\CQ(V)$. There are two natural types of divisors. Divisors $L_1,\dots,L_{n-1}$ are pullbacks of hyperplanes under $\pi_i:\CQ(V)\rightarrow\PP(S^2(\bigwedge^{i} V))$. Divisors $S_1,\dots,S_{n-1}$ are classes of the strict transforms $E_1,\dots,E_{n-1}$ of the exceptional divisors of blow-ups from our second definition, i.e.~a general point of $E_i$ maps by $\pi_1$ to a symmetric matrix of rank $i$. Alternatively $E_i$ is the set of all points $(A_1,\dots,A_{n-1})\in\CQ(V)$ for which $\rank A_i=1$.

\begin{Proposition}(\cite{schubert}, \cite[Proposition 3.6, Theorem 3.13]{massarenti})\\
We have the following relations between $S_i$ and $L_i$ in $Pic(\CQ(V))$:

$$
S_i= -L_{i-1}+2L_i-L_{i+1},
$$
where we formally put $L_0=L_n=0$.
\end{Proposition}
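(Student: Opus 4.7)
The plan is to first establish the explicit formula
\[
L_r \;=\; r L_1 \;-\; \sum_{i=1}^{r-1} (r-i)\, S_i \qquad (\ast)
\]
in $\Pic(\CQ(V))$ for $1 \le r \le n$, where the case $r = n$ together with $L_n = 0$ (forced since $\bigwedge^n A = \det A$ sends $\PP(S^2 V)$ to a point) yields the supplementary relation $n L_1 = \sum_{i=1}^{n-1}(n-i)\, S_i$. Once $(\ast)$ is in hand, the stated identity $S_i = -L_{i-1} + 2 L_i - L_{i+1}$ follows by direct substitution: the $L_1$-coefficient $-(i-1) + 2i - (i+1)$ vanishes, the $S_j$-coefficients for $j < i$ collapse via $(i-1-j) - 2(i-j) + (i+1-j) = 0$, and only the $S_i$-contribution $(i+1-i) = 1$ survives; the boundary cases $i = 1$ and $i = n-1$ are handled by the conventions $L_0 = 0$ and $L_n = 0$ together with the supplementary degree-$n$ relation.

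To establish $(\ast)$, I would view $\pi_r : \CQ(V) \to \PP(S^2(\bigwedge^r V))$ as the resolution of the rational map $\PP(S^2 V) \dashrightarrow \PP(S^2(\bigwedge^r V))$ given by the $r \times r$ minors of $A$, which are homogeneous of degree $r$ in the entries of $A$. Pulling back the hyperplane class $H_r$ to $\CQ(V)$ therefore forces
\[
L_r \;=\; r L_1 \;-\; \sum_{i=1}^{r-1} m_i\, S_i,
\]
where $m_i$ is the multiplicity of the ideal of $r \times r$ minors at the generic point of $\widetilde D^i$, the center blown up at the $i$-th stage. Contributions from $E_i$ with $i \ge r$ vanish because a generic rank-$i$ matrix with $i \ge r$ already has a nonzero $r \times r$ minor. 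The principal obstacle is therefore the multiplicity computation $m_i = r - i$ for $i < r$.

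For this I would work locally near a general rank-$i$ symmetric matrix and write $A = \begin{pmatrix} P & R \\ R^T & S \end{pmatrix}$ with $P \in \GL_i$ invertible. The congruence
\[
A \;=\; L\,\diag\!\left(P,\; S - R^T P^{-1} R\right) L^T
\]
shows that every entry of the compound matrix $\bigwedge^r A$ is a sum of products of a $j$-minor of $P$ (a unit at our point for $j \le i$) with an $(r-j)$-minor of the Schur complement $B := S - R^T P^{-1} R$, whose entries are precisely the local defining equations of $D^i$. The lowest-order contribution comes from $j = i$, producing an $(r-i)$-minor of $B$, a homogeneous polynomial of degree exactly $r-i$ in those defining equations with nonvanishing leading form; contributions from $j < i$ involve higher-order minors of $B$ and hence vanish to strictly higher order. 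Since the strict transform $\widetilde D^i \to D^i$ is birational at the generic point, the multiplicity computation transfers from $\PP(S^2 V)$ to $\CQ_{i-1}$, yielding $m_i = r - i$ and completing the proof of $(\ast)$ and hence of the proposition.
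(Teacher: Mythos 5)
The paper does not prove this proposition at all: it is quoted from Schubert and from Massarenti (\cite[Proposition 3.6, Theorem 3.13]{massarenti}), so there is no internal argument to compare against. Your proposal supplies a genuine, self-contained proof, and I believe it is correct. The reduction of the proposition to the formula $L_r = rL_1 - \sum_{i<r}(r-i)S_i$ is a clean second-difference computation (note that the identity $(i-1-j)-2(i-j)+(i+1-j)=0$ also disposes of the $j=i-1$ term, since its first summand is $0$ anyway), and the top boundary case $i=n-1$ is correctly closed by the relation $nL_1=\sum_i (n-i)S_i$, which is the linear equivalence $\div(\det)=\sum_i(n-i)E_i$ in $|\mathcal O(n)|$. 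Your formula is also consistent with the two relations the paper does record: substituting the supplementary relation into $(\ast)$ at $r=n-1$ gives exactly $L_{n-1}=\frac1n\sum_i iS_i$. The multiplicity computation $m_i=r-i$ via the congruence $A=L\,\diag(P,\,S-R^TP^{-1}R)\,L^T$ and Cauchy--Binet (so that $I_r(A)=\sum_j I_j(P)I_{r-j}(B)$ with $I_i(P)=(1)$ near a generic rank-$i$ point) is the standard and correct way to get the order of vanishing of the $r\times r$ minors along $D^i$.

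One point deserves to be made more explicit, though it does not invalidate the argument: in an iterated blow-up the naive formula $\pi_r^*H_r=rL_1-\sum_i m_i(\text{exceptional})$ produces coefficients against \emph{total} transforms, whereas $S_i$ is by definition the class of the \emph{strict} transform $E_i$, and the two differ by contributions from later exceptional divisors. The correct way to pin down the coefficient of $S_i$ is as $\mathrm{ord}_{E_i}$ of the pullback of a general section of $H_r$, evaluated at the generic point of $E_i$; since the generic point of $E_i$ lies over the generic point of $D^i$, which avoids all $D^j$ with $j<i$ and is untouched by the blow-ups in stages $j>i$, this order is exactly $\mathrm{mult}_{D^i}I_r(A)=r-i$. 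Your remark that ``the multiplicity computation transfers from $\PP(S^2V)$ to $\CQ_{i-1}$'' is precisely this point, so the gap is one of exposition rather than substance; I would just spell out that the coefficient being computed is an order of vanishing along the divisor $E_i$ at its generic point, not a multiplicity along a center that later blow-ups could modify.
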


As a consequence we can also express $L_i$ in terms of $S_i$. We will need the following relations:

\begin{equation}
L_{n-1}=\frac{1}{n}\Big(S_1 +2S_2 + \ldots + (n-1)S_{n-1}\Big).
\end{equation}
\begin{equation}\label{relations}
L_{n-1}=\frac{1}{n-i}\Big(L_i +S_{i+1} + \ldots + (n-i-1)S_{n-1}\Big).
\end{equation}

Note that the first equation is a special case of the second one for $i=0$, where we again formally put $L_0=0$.

We denote by $L(\Gamma)\subset \PP(S^2 V)$ the linear subspace for the Gaussian graphical model of the graph $\Gamma$, i.e.~the locus of symmetric matrices $A$ for which $A_{ij}=0$ when $i\neq j$ and $\Gamma$ does not have an edge between the $i$-th and $j$-th vertex. The case we will be mostly interested in is $\Gamma=C_n$, however many of our methods work for arbitrary $\Gamma$. Our main goal is to compute the degree of $L(C_n)^{-1}$ which is the closure of the variety obtained by inverting all regular matrices in $L(C_n)$. Let $X(\Gamma)$ be the strict transform of $L(\Gamma)$ in the space of complete quadrics, i.e. the closure of the image of $\phi:L^\circ(\Gamma)\rightarrow\CQ(V)$, where $L^\circ(\Gamma)$ is the space of all regular matrices in $L(\Gamma)$ and $\phi$ is as in the Definition \ref{CQ1}. We denote by $[X(\Gamma)]$ its class in the cohomology ring of $\CQ(V)$.

\begin{Proposition}\label{Prop: deg} The following formula holds
$$\deg(L(\Gamma)^{-1})=[X(\Gamma)]L_{n-1}^{\dim(L(\Gamma)^{-1})}.$$
\end{Proposition}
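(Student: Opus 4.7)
The plan is to realize $L(\Gamma)^{-1}$ directly as the image of $X(\Gamma)$ under the projection $\pi_{n-1}\colon \CQ(V)\to \PP(S^2(\bigwedge^{n-1}V))$, and then use the projection formula together with the fact that $L_{n-1}=\pi_{n-1}^*H$ for a hyperplane class $H$. The starting observation is that for an invertible symmetric matrix $A$ one has $\bigwedge^{n-1}A = (\det A)\,A^{-1}$ after the standard identification $\bigwedge^{n-1}V\cong V^*$ induced by a volume form, so projectively $[\bigwedge^{n-1}A]=[A^{-1}]$. Consequently $\pi_{n-1}\circ\phi$ restricted to $L^\circ(\Gamma)$ is exactly the inversion map $[A]\mapsto[A^{-1}]$, whose image (by definition) has closure $L(\Gamma)^{-1}$.

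The second step is to check that $\pi_{n-1}\colon X(\Gamma)\to L(\Gamma)^{-1}$ is birational. Since $\phi$ realizes $X(\Gamma)$ as a compactification of $L^\circ(\Gamma)$, and inversion on $L^\circ(\Gamma)$ is an involution (its image lies in its own domain of inversion at the generic point), the composition $\pi_{n-1}\circ\phi$ restricted to $L^\circ(\Gamma)$ is birational onto its image. Hence $\pi_{n-1}|_{X(\Gamma)}$ is birational, which in particular gives $\dim X(\Gamma)=\dim L(\Gamma)^{-1}$ and $(\pi_{n-1})_*[X(\Gamma)]=[L(\Gamma)^{-1}]$ in the Chow ring of $\PP(S^2(\bigwedge^{n-1}V))$.

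With these two facts in hand, the computation is a one-line application of the projection formula. Writing $d=\dim L(\Gamma)^{-1}$ and $H$ for the hyperplane class of $\PP(S^2(\bigwedge^{n-1}V))$, so that $L_{n-1}=\pi_{n-1}^*H$, we get
$$\deg L(\Gamma)^{-1} \;=\; H^d\cdot [L(\Gamma)^{-1}] \;=\; H^d\cdot (\pi_{n-1})_*[X(\Gamma)] \;=\; (\pi_{n-1})_*\bigl(L_{n-1}^d\cdot [X(\Gamma)]\bigr),$$
and since the pushforward of a zero-cycle preserves degree, the right-hand side equals the intersection number $[X(\Gamma)]\cdot L_{n-1}^d$ computed in the cohomology ring of $\CQ(V)$.

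The only step that requires genuine attention is the birationality of $\pi_{n-1}|_{X(\Gamma)}$; after that, everything reduces to bookkeeping. The identification $\bigwedge^{n-1}A\sim A^{-1}$ (projectively) makes the rational inverse of $\pi_{n-1}|_{X(\Gamma)}$ explicit, so this step is in fact immediate. There is no real obstacle here: the content of the proposition is the translation from a projective degree to an intersection number on $\CQ(V)$, and this is precisely what the universal property of the space of complete quadrics — together with Definition~\ref{CQ1} — is designed to provide. The payoff is that all subsequent sections can work entirely inside the cohomology ring of $\CQ(V)$ to compute the right-hand side for $\Gamma=C_n$.
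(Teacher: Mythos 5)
Your proof is correct and follows essentially the same route as the paper: both arguments rest on the observation that $\pi_{n-1}$ maps $X(\Gamma)$ generically injectively onto $L(\Gamma)^{-1}$ (because matrix inversion is projectively injective on regular matrices) and that $L_{n-1}$ is the pullback of the hyperplane class. The paper phrases this as a direct count of intersection points with $\dim(L(\Gamma)^{-1})$ general hyperplanes on both sides, while you package the same idea via the pushforward $(\pi_{n-1})_*$ and the projection formula; the content is identical.
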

\begin{proof}
The degree of $L(\Gamma)^{-1}$ can be computed as the number of intersection points of $L(\Gamma)^{-1}$ with $\dim(L(\Gamma)^{-1})$ general hyperplanes. Since the hyperplanes are general, the intersection points will be just inverses of regular matrices from $L(\Gamma)$. On the other hand $[X(\Gamma)]L_n^{\dim(L(\Gamma)^{-1})}$ is also the number of points $(A_1,\dots,A_{n-1})\in X(C_n)$ which satisfy $\dim(L(\Gamma)^{-1})$ general linear conditions on $A_{n-1}$. Again, these points must be from $\phi(L^\circ(\Gamma))$, i.e.~$A_1$ is a regular matrix from $L(\Gamma)$ and $A_{n-1}$ is its inverse. Thus, both sides compute the same number.
\end{proof}
\begin{Corollary}\label{CQ_formula} The following formula holds
$$\deg(L(C_n)^{-1})=[X(C_n)]L_{n-1}^{2n-1}.$$
\end{Corollary}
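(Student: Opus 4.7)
The plan is to apply Proposition~\ref{Prop: deg} directly with $\Gamma = C_n$; once the projective dimension $\dim L(C_n)^{-1}$ is identified as $2n-1$, the claimed exponent in the intersection product drops out immediately.

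First I would compute $\dim L(C_n)$. The linear subspace $L(C_n) \subset \PP(S^2 V)$ consists of symmetric matrices whose off-diagonal entries $a_{ij}$ vanish whenever $\{i,j\}$ is not an edge of $C_n$. The free entries are therefore the $n$ diagonal entries $a_{ii}$ together with the $n$ off-diagonal entries corresponding to the $n$ edges of $C_n$, giving $2n$ free parameters in total. As a projective linear subspace this yields $\dim L(C_n) = 2n-1$, which matches the parameter count in the $n = 5$ example displayed above.

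Next I would check that inversion preserves this dimension. Since the identity matrix lies in $L(C_n)$, the set $L^\circ(C_n)$ of regular matrices in $L(C_n)$ is a nonempty Zariski-open subset. Matrix inversion is birational on the space of regular symmetric matrices, being induced by the entries of $\bigwedge^{n-1} A$, so it preserves dimension on the image. Taking closures then gives $\dim L(C_n)^{-1} = \dim L(C_n) = 2n - 1$, and substituting this value into Proposition~\ref{Prop: deg} yields the corollary. There is no real obstacle at this stage; the substantive work, namely evaluating the intersection number $[X(C_n)]\, L_{n-1}^{2n-1}$, is deferred to the geometric analysis of $X(C_n)$ and its intersections with the divisors $S_i$ and $L_i$ carried out in the later sections.
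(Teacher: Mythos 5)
Your proposal is correct and follows the paper's own argument exactly: the corollary is an immediate application of Proposition~\ref{Prop: deg} once one notes $\dim(L(C_n)^{-1})=\dim(L(C_n))=2n-1$. Your additional justification of the dimension count ($n$ diagonal plus $n$ edge entries, with inversion birational on the open locus of regular matrices) is a correct elaboration of what the paper leaves implicit.
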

\begin{proof}
Follows from Proposition~\ref{Prop: deg}, as $\dim(L(C_n)^{-1})=\dim(L(C_n))=2n-1$.
\end{proof}

Our goal is to understand the class $[X(C_n)]$ and the intersection products involving it.

\subsection{Description of points in $E_r$}\label{points-E_r}
The next step is to have a good description of the points in $E_r$. Here we give two ways how we may think about them.
For a point $P\in \CQ(V)$ we denote $P_r=\pi_r(P)\in \PP(S^2 (\bigwedge^r V))$.
From the quotient construction of the space of complete quadrics \cite{mateusz1,thaddeus}, it can be shown that
\[
E_r=\CQ_r(\fU)\times_{G(r,n)} \CQ_{n-r}(\fQ^*).
\]
In other words, points $P\in S_r$ can be viewed as a triple $(A_P,U_P,B_P)$ where  $A_P\in \CQ(U)$, $U_P\in G(r,V)$, $B_P\in \CQ(V/U)$. Since $A_P,B_P$ are also elements of some space of complete quadrics, we define $(A_P)_i$ and $(B_P)_i$ analogously as the images under the corresponding projections.

The relations between two descriptions of a point $P\in S_r$ are as follows:
\begin{itemize}

\item The second Veronese of $U_P$ equals $P_r$, which makes sense since the matrix $P_r$ is of rank $1$. 

\item Let $1\le i\le r-1$. Then $(A_P)_i$ is a natural interpretation of $P_i$ as an element of $\PP(S^2(\bigwedge^i U))\subset \PP(S^2(\bigwedge^i V))$.

\item Let $1\le i\le n-r-1$ and $W\in  G(r+i,V^*)$. We consider $P_{r+i}$ as a symmetric, bilinear form on $ \PP(\bigwedge^{r+i}V^*)$. In particular, we may evaluate it on $W$ obtaining a linear form on  $\PP(\bigwedge^{r+i}V^*)$. If $\dim(U^\perp\cap W)>i$, then the linear form $$P_{r+i}(W)=0.$$ 

We fix any vectors $v_1^*,v_2^*,\dots,v_r^*\in V^*$ such that $U^\perp,v_1^*,v_2^*,\dots,v_r^*$ generate $V^*$. Then the vectors of the form $v_1^*\wedge v_2^*\wedge\dots\wedge v_r^*\wedge u_1^*\wedge\dots\wedge u_i^*\in \bigwedge^{r+i}V^*$ for $u_1^*,\dots,u_i^*\in U^\perp$ generate the subspace of $\bigwedge^{r+i}V^*$ which contains the space of all points $w\in \bigwedge^{r+i}V^*$ for which the linear form $P_{r+i}(w)\neq 0$.

In this setting the relation between $P_{r+i}$ and $(B_P)_i$ is as follows:

$$P_{r+i}(v_1^*\wedge v_2^*\wedge\dots\wedge v_r^*\wedge u_1^*\wedge\dots\wedge u_i^*,v_1^*\wedge v_2^*\wedge\dots\wedge v_r^*\wedge {u'}_1^*\wedge\dots\wedge {u'}_i^*)=$$
$$=(B_P)_i( u_1^*\wedge\dots\wedge u_i^*, {u'}_1^*\wedge\dots\wedge {u'}_i^*).$$

Note that by making a different choice of vectors $v_1^*,v_2^*,\dots,v_r^*\in V^*$ we only multiply the billinear form $(B_P)_i$ by a constant which does not matter since we are working projectively.
\end{itemize}

 In the next sections, we will exploit the symmetries of the space $L(C_n)$. There is a natural action of dihedral group $\Aut(C_n)=D_n\subset \mathbb S_n$ on vector space $V$ such that $\sigma(e_i)=e_{\sigma(i)}$. This action naturally extends on $S^2 (V)$ and $\CQ(V)$. We notice that the space $L(C_n)$ is invariant under this action and, therefore, also the variety $X(C_n)$ is invariant. When we will work with points in $X(C_n)$, it will be enough to prove the statements after acting with a suitable $\sigma\in D_n$.

\section{Low rank matrices}\label{low rank}
In this section we describe low-rank matrices in $L(C_n)$.  We denote the entries of a matrix $A\in L(C_n)$ by $a_i,b_i$ as follows:

\[A=\begin{pmatrix}
a_1 & b_1 & 0 & 0 & \dots & b_n \\
b_1 & a_2 & b_2  & 0 & \dots & 0 \\
0   & b_2 & a_3 & b_3 & \dots & 0 \\
0 & 0 & b_3 & a_4 & \dots & 0 \\
\vdots & \vdots & \vdots & \vdots & \ddots & \vdots \\
b_n & 0 & 0 &0 & \dots & a_n
\end{pmatrix}.\]

 We denote by $Z(A)=\{i \:\; b_i=0\}$. We say that $A$ is in {\it a cyclic-block form} if $Z(A)\neq\emptyset$. We denote by $z_1<\dots<z_k$ all elements of $Z(A)$. The block $B_i$ is formed by taking the rows and the columns from $A$ indexed by $z_{i-1}+1,z_{i-1}+2,\dots,z_i$. We consider the indices $i$ modulo $k$ and $z_i$ modulo $n$. In particular, $B_1$ is formed by rows and columns $z_k+1,\dots,n,1,2,\dots,z_1$. Note that all non-zero entries of $A$ are in some block $B_i$. Moreover, all the blocks are tridiagonal. We denote the size of $B_i$ by $|B_i|$, which is $z_i-z_{i-1}$.

\begin{Lemma}
The rank of the block $B_i$ is either $|B_i|$ or $|B_i|-1$. The rank of a matrix $A$ in a cyclic-block form is the sum of the ranks of all blocks.
\end{Lemma}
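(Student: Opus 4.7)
The plan is to handle the two assertions separately, first analyzing a single block and then reducing the global statement to the block-diagonal case by a cyclic permutation.

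For the bound on the rank of a single block, I would first observe that within $B_i$ all off-diagonal entries $b_j$ (for $z_{i-1}+1 \le j \le z_i - 1$) are necessarily nonzero: otherwise $j$ would itself belong to $Z(A)$, contradicting the fact that $z_{i-1}$ and $z_i$ are \emph{consecutive} elements of $Z(A)$. Since $B_i$ is symmetric tridiagonal of size $|B_i|$, the key trick is to exhibit a nonzero minor of size $|B_i|-1$. Consider the submatrix obtained by deleting the first row and the last column of $B_i$: in a tridiagonal matrix this leaves a triangular matrix whose diagonal entries are precisely the off-diagonal $b_j$'s, all of which are nonzero. Hence this minor is a nonzero product of the $b_j$'s, giving $\rank B_i \ge |B_i|-1$; the reverse inequality $\rank B_i \le |B_i|$ is trivial. (The degenerate case $|B_i|=1$ is consistent: rank is $0$ or $1$.)

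For the additivity statement, I would argue that the zero positions $b_{z_1},\dots,b_{z_k}$ of $A$ correspond precisely to the edges of $C_n$ whose removal breaks the cycle into $k$ arcs, with the vertices of the $i$-th arc being the index set of $B_i$. Applying the cyclic permutation that sends index $z_k+1$ to $1$, the rows and columns of each block become contiguous, and since every nonzero entry of $A$ lies inside some $B_i$, the permuted matrix is block-diagonal with diagonal blocks $B_1,\dots,B_k$. Conjugation by a permutation matrix preserves rank, so
\[
\rank A \;=\; \sum_{i=1}^{k} \rank B_i,
\]
which is the second claim.

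There is no real obstacle: the statement is a direct consequence of the combinatorial structure of tridiagonal (Jacobi-type) matrices together with the observation that $Z(A)$ splits the cycle into independent arcs. The only point requiring care is noting that the off-diagonal entries within a single block are automatically nonzero by maximality of the block.
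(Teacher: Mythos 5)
Your proposal is correct and follows essentially the same route as the paper: both exhibit the nonzero minor of $B_i$ obtained by deleting the first row and last column (a triangular matrix with the nonzero off-diagonal entries $b_j$ on its diagonal) to get $\rank B_i \ge |B_i|-1$, and both reduce the additivity claim to the block-diagonal structure (the paper simply declares this part obvious, while you spell out the permutation argument).
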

\begin{proof}
Let
\[B_i=\begin{pmatrix}
a_1 & b_1 & 0 & 0 & \dots & 0 \\
b_1 & a_2 & b_2  & 0 & \dots & 0 \\
0   & b_2 & a_3 & b_3 & \dots & 0 \\
0 & 0 & b_3 & a_4 & \dots & 0 \\
\vdots & \vdots & \vdots & \vdots & \ddots & \vdots \\
0 & 0 & 0 &0 & \dots & a_{|B_i|}
\end{pmatrix}.\]

By the definition of blocks, all $b_i$ are non-zero. Then the minor obtained by deleting the first row and the last column is $b_1b_2\dots b_{|B_i|-1}\neq 0$. Hence, the rank of the block $B_i$ is at least $|B_i|-1$. The fact about the rank of $A$ is obvious.
\end{proof}

\begin{Lemma} \label{lemma:lowrank} Let $r\leq n-3$. Then all rank $r$ matrices in $L(C_n)$ are in a cyclic-block form.
\end{Lemma}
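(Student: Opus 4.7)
The plan is to prove the contrapositive: if $Z(A) = \emptyset$, i.e.\ every $b_i$ (including $b_n$) is nonzero, then $\rank A \geq n-2$. Since the lemma assumes $r = \rank A \leq n-3$, this is logically equivalent.

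To witness $\rank A \geq n-2$ I would exhibit one specific $(n-2)\times(n-2)$ submatrix of $A$ with nonzero determinant. A convenient choice is $A'$, the submatrix obtained from $A$ by deleting rows $1,2$ and columns $n-1,n$. Reading off the sparsity pattern displayed at the start of the section, the first column of $A'$ has only one potentially nonzero entry, its bottom entry $A_{n,1} = b_n$.

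Expanding $\det(A')$ along that first column reduces the problem to computing the determinant of the $(n-3)\times(n-3)$ minor $A''$ with rows indexed by $3,4,\ldots,n-1$ and columns indexed by $2,3,\ldots,n-2$. A short index check, using that rows $3,\ldots,n-1$ carry no cyclic wraparound and so have nonzero entries only on the main three diagonals of $A$, shows that $A''$ is upper triangular with $i$-th diagonal entry $b_{i+1}$, coming from $A_{i+2,i+1}$, for $i=1,\ldots,n-3$. Consequently
\[
\det A' \;=\; \pm\, b_n \cdot b_2\, b_3 \cdots b_{n-2},
\]
which is nonzero by hypothesis, giving $\rank A \geq n-2$ and hence the lemma.

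No deeper obstacle is anticipated. The only mild technicality is verifying that the upper triangular shape of $A''$ really persists in its last two rows, where the entries $a_{n-1}$ and $b_{n-1}$ have been trimmed away by the column deletion; this follows immediately by writing the last two rows of $A''$ out explicitly.
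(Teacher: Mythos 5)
Your proposal is correct and is essentially the paper's own argument: the paper also considers the minor obtained by deleting the first two rows and the last two columns, observes it equals $b_2b_3\cdots b_{n-2}b_n$, and concludes that some $b_i$ must vanish when the rank is at most $n-3$. Your contrapositive phrasing and the explicit triangularity check are just a more detailed write-up of the same computation.
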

\begin{proof}
Consider a minor obtained by deleting first two rows and last two columns from $A$. This minor is $b_2b_3\dots b_{n-3}b_{n-2}b_n$. Since $r\leq n-3$, this minor has to be 0. Therefore at least one $b_i$ is 0 and hence, $A$ is in a cyclic-block form.
\end{proof}

Assume $z_1<\dots<z_k=n\in Z(A)$. If for the blocks corresponding to these zeros, we have that $\rank(B_i)=|B_i|-1$, then we say that the matrix $A$ is in a {\it block form of type $T=(t_1,\dots,t_k)$}, where $t_i=|B_i|$. Note that we do not require $\{z_1,\dots,z_k\}= Z(A)$, i.e. $Z(A)$ can have more elements.

\begin{Lemma}
Let $A\in L(C_n)$ be a singular matrix in a cyclic-block form. Then, for suitable $\sigma\in D_n$, $\sigma(A)$ is in a block form of type $T$ for some $T$.
\end{Lemma}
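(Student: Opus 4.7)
\emph{Strategy.} The plan is to use a rotation in $D_n$ to place a zero of $A$ at position $n$, then to consider the finest ``atomic'' block decomposition of $A$ using \emph{all} of $Z(A)$, and finally to greedily select a sub-list of zeros as block boundaries so that each resulting block contains exactly one rank-deficient atomic sub-block.

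Concretely, since $A$ is in cyclic-block form, I can pick any $w\in Z(A)$ and apply the rotation $\sigma\in D_n$ sending the vertex $w$ to the vertex $n$. After replacing $A$ by $\sigma(A)$, I may assume $n\in Z(A)$, so I can write $Z(A)=\{w_1<w_2<\cdots<w_\ell=n\}$ and view $\sigma(A)$ as the block-diagonal sum of $\ell$ tridiagonal atomic blocks $C_1,\dots,C_\ell$, where $C_j$ occupies rows and columns $w_{j-1}+1,\dots,w_j$ (indices taken cyclically, so $C_1$ sits on rows $1,\dots,w_1$). By the previous lemma each $C_j$ satisfies $\rank(C_j)\in\{|C_j|,\,|C_j|-1\}$ and $\rank(A)=\sum_j\rank(C_j)$, and the singularity of $A$ together with $\sum_j |C_j|=n$ forces at least one of the $C_j$ to be rank-deficient; I list the indices of these rank-deficient atomic blocks in cyclic order as $i_1<i_2<\cdots<i_s$ with $s\ge 1$.

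Finally I take $z_j:=w_{i_j}$ for $1\le j<s$ and $z_s:=w_\ell=n$. For $j<s$ the corresponding big block $B_j$ is then the union of the consecutive atomic blocks $C_{i_{j-1}+1},\dots,C_{i_j}$ (with the convention $i_0:=0$), and $B_s=C_{i_{s-1}+1}\oplus\cdots\oplus C_\ell$. Since rank is additive across atomic blocks and each $B_j$ contains exactly one rank-deficient atomic sub-block by construction, $\rank(B_j)=|B_j|-1$ for every $j$, and hence $\sigma(A)$ is in block form of type $T=(|B_1|,\dots,|B_s|)$. The only subtle point is arranging simultaneously that $z_s=n$ and that exactly one rank-deficient atomic block sits in each resulting arc; the rotation in the first step handles the former and the greedy choice $z_j=w_{i_j}$ handles the latter, and the recipe is uniform whether or not $C_\ell$ itself happens to be rank-deficient (i.e.\ whether $i_s=\ell$ or $i_s<\ell$).
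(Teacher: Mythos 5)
Your proof is correct and follows essentially the same route as the paper: both arguments rest on rotating a zero to position $n$, the additivity of rank over the tridiagonal blocks, and the fact that singularity forces at least one block to be rank-deficient. The only cosmetic difference is that you select the block boundaries in one shot (taking the right endpoint of each rank-deficient atomic block, with the last boundary moved to $n$), whereas the paper reaches the same grouping by iteratively merging each full-rank block into its left neighbour; both correctly ensure each final block contains exactly one deficient atomic block and that $n$ remains a boundary.
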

\begin{proof}
Since $Z(A)\neq\emptyset$, by acting with a suitable $\sigma\in D_n$, we obtain a matrix with $b_n=0$. Thus, we may restrict to the case $b_n=0$. Assume that the matrix $A$ has a block $B_j$ with $\rank B_j=|B_j|$ and $j>1$. Then we can simply merge the blocks $B_j$ and $B_{j-1}$ into one block. The rank of the obtained block is $\rank B_j+\rank B_{j-1}\ge |B_j|+|B_{j-1}|-1$. Therefore, we decrease the number of blocks and still all blocks $B_i$ are of rank $|B_i|$ or $|B_i|-1$. If $\rank B_1=|B_1|$, then we merge $B_1$ and $B_2$. We can continue merging blocks until all blocks $B_i$ are of the rank $|B_i|-1$, or we remain with just one block. However, we can not have just one block with rank $n$ since matrix $A$ is singular.
\end{proof}

As a consequence of the previous two lemmas, we get the following result:

\begin{Corollary}\label{lemma:blockform}
Let $r\le n-3$ and $A\in L(C_n)$ be a matrix of rank $r$. Then, for suitable $\sigma\in D_n$, $\sigma(A)$ is in a block form of type $T=(t_1,\dots,t_{n-r})$ for a suitable $T$.
\end{Corollary}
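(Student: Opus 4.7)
The plan is to simply chain the two preceding lemmas and then pin down the number of blocks by a rank count. First I would observe that since $r \le n-3 < n$, the matrix $A$ is singular; moreover, Lemma~\ref{lemma:lowrank} applies directly and tells us that $A$ is in a cyclic-block form. This takes care of the assumption needed in the second lemma.

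Next I would apply the preceding (unnamed) lemma on singular matrices in cyclic-block form: it produces an element $\sigma \in D_n$ such that $\sigma(A)$ is in block form of some type $T = (t_1, \ldots, t_k)$. Here $k$ is not yet identified; the work that remains is to show $k = n-r$.

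For this last step I would use the rank computation. By the definition of block form of type $T$, each block $B_i$ satisfies $\rank(B_i) = |B_i| - 1 = t_i - 1$, and by the first lemma of the section the rank of $\sigma(A)$ is the sum of the ranks of its blocks. Since $\sigma$ is a permutation of coordinates, $\rank \sigma(A) = \rank A = r$. Thus
\[
r \;=\; \sum_{i=1}^{k} (t_i - 1) \;=\; \Big(\sum_{i=1}^{k} t_i\Big) - k \;=\; n - k,
\]
where the last equality uses that the blocks partition the $n$ rows and columns, so $\sum t_i = n$. Hence $k = n - r$, which is exactly the claim.

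I do not expect a genuine obstacle here: the corollary is a bookkeeping consequence of Lemma~\ref{lemma:lowrank}, the preceding lemma, and the additive behaviour of rank across blocks. The only mild subtlety is remembering that being in a block form of type $T$ is a stronger condition than being in cyclic-block form (the blocks singled out by $T$ must all be rank-deficient), which is why one must invoke the second lemma and cannot simply read off $T$ directly from $Z(A)$.
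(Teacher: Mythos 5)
Your proof is correct and matches the paper's intent: the paper states this corollary without proof as "a consequence of the previous two lemmas," and your argument is precisely that chaining, with the rank count $r=\sum(t_i-1)=n-k$ making explicit why the number of blocks must be $n-r$. No gaps.
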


For any type $T=(t_1,\dots,t_k)$ there is a natural decomposition of the vector space $V=\bigoplus V_i^T$, where $V_i^T$ is  the vector space corresponding to the block $B_i$. That means $V_i^T$ is spanned by $e_{t_1+\dots+t_{i-1}+1},\dots,e_{t_1+\dots+t_{i}}$ and we have $\dim(V_i^T)=t_i$.

\section{Geometry of $X(C_n)$}\label{geometry}
In this section we will describe the intersection of the variety $X(C_n)$ with the exceptional divisors $E_r$. We will often use the representation of the points $P\in E_r$ as triples $(A_P,U_P,B_P)$, and as projections $P_i$, described in Section \ref{points-E_r}.

\subsection{Intersection of $X(C_n)$ with $E_{r}$ for $r<n-2$}

In this subsection we consider the case $1\le r< n-2$. We fix $(A,U)$ and describe the fiber over this pair, i.e. all possible $B$ such that the triple $(A,U,B)$ defines a point in the intersection $X(C_n)\cap E_r$. In Lemmas~\ref{lowrankequation} and \ref{lemma:XE1} we consider the case $r=1$. In Lemma~\ref{lowrankequation} we prove that $B$ lies in the total transform of $L(C_{n-1})$ viewed as the subspace of $V/U$. In Lemma~\ref{lemma:XE1}, we prove that $B$ also lies in the strict transform of $L(C_{n-1})$ and also the other inclusion, i.e. that every such $B$ gives us a point in $X(C_n)\cap E_1$. Then we use this result to prove in Lemma~\ref{lemma:recursion-XE_r} that the same happens in the case when $r>1$, namely the point $B$ lies in the strict transform of $L(C_{n-r})$.

In what follows we will be working with points $(A,U,B)\in X(C_n)\cap E_1$. The space $U$ must be the image of a rank one matrix from $L(C_n)$. By Corollary~\ref{lemma:blockform}, after acting with suitable $\sigma\in D_n$ is the space $U$ generated by a vector of the form $e_1+ae_2$. Thus, throughout this subsection we may, without loss of generality, assume that $U$ is generated by $e_1+ae_2$.

\begin{Lemma}\label{lowrankequation}
Let  $P\in X(C_n)\cap E_1$ be such that $U_P$ is generated by $e_1+ae_2$. By writing $B:=(B_P)_1\in\PP(\S^2(V/U_P))$ as a matrix in basis $\widehat{e_2},\dots,\widehat{e_n}$, we have $B_{kl}=0$ for $n-2>|k-l|>1$, i.e $B$ lies in the linear space for the smaller cycle.
\end{Lemma}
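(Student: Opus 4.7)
The strategy is to translate the desired vanishing of $(B_P)_{kl}$ into a vanishing statement about the projection $P_2 = \pi_2(P) \in \PP(S^2(\bigwedge^2 V))$, and then observe that this vanishing is already forced on the dense open set $\phi(L^\circ(C_n))\subset X(C_n)$ by the sparsity pattern of matrices in $L(C_n)$.

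First, I would set up the dual basis. Since $U_P = \spann(e_1 + ae_2)$, the vector $v^* = e_1^*$ satisfies $v^*(e_1+ae_2)=1\ne 0$, so $U_P^\perp$ together with $v^*$ spans $V^*$. The dual basis in $U_P^\perp$ of the quotient basis $\widehat{e_2},\dots,\widehat{e_n}$ of $V/U_P$ is $u_2^* = e_2^* - a e_1^*$ and $u_k^* = e_k^*$ for $3\le k\le n$. The key observation is that for every $k\ge 2$,
$$
e_1^*\wedge u_k^* \;=\; e_1^*\wedge e_k^*,
$$
since the $-a\,e_1^*\wedge e_1^*$ contribution in the case $k=2$ vanishes.

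Next, applying the formula recalled in Section~\ref{points-E_r} with $r=i=1$, for every $2\le k,l\le n$,
$$
(B_P)_{kl} \;=\; (B_P)_1(u_k^*, u_l^*) \;=\; P_2(e_1^*\wedge u_k^*,\; e_1^*\wedge u_l^*) \;=\; P_2(e_1^*\wedge e_k^*,\; e_1^*\wedge e_l^*).
$$
Thus it suffices to show that the linear functional $\ell \colon Q\mapsto Q(e_1^*\wedge e_k^*, e_1^*\wedge e_l^*)$ vanishes on $\pi_2(X(C_n))$. Because $X(C_n)$ is the closure of $\phi(L^\circ(C_n))$, it is enough to verify that $\ell(\bigwedge^2 A') = 0$ for every regular $A'\in L(C_n)$. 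But $\bigwedge^2 A'(e_1^*\wedge e_k^*, e_1^*\wedge e_l^*)$ is just the $(\{1,k\},\{1,l\})$-minor
$$
A'_{11}A'_{kl} - A'_{1l}A'_{1k}.
$$

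Finally, I would finish by a direct case check on the sparsity pattern of $L(C_n)$, using the hypothesis $1<|k-l|<n-2$ with $k,l\in\{2,\dots,n\}$. For such $(k,l)$ the pair $\{k,l\}$ is not an edge of $C_n$ (the only edges with both endpoints in $\{2,\dots,n\}$ are the consecutive pairs), so $A'_{kl}=0$, killing the first term. For the second term, $A'_{1m}\ne 0$ forces $m\in\{2,n\}$; if both $k$ and $l$ lay in $\{2,n\}$ the distance $|k-l|$ would be $n-2$, contradicting the hypothesis, so at least one of $A'_{1k}, A'_{1l}$ is zero. Hence $\ell$ vanishes on all of $\phi(L^\circ(C_n))$, and therefore on $X(C_n)$, giving $(B_P)_{kl}=0$ as claimed. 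The main point requiring care is the bookkeeping of the dual-basis relation $e_1^*\wedge u_k^* = e_1^*\wedge e_k^*$, which is what makes the reduction to a simple minor computation clean; the sparsity check itself is routine once this identification is in place.
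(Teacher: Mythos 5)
Your proof is correct and follows essentially the same route as the paper's: reduce $B_{kl}$ to the $2\times 2$ minor $P'_2(e_1^*\wedge e_k^*,\, e_1^*\wedge e_l^*)$ of regular matrices in $L(C_n)$, check it vanishes by the sparsity pattern, and conclude by passing to the closure. The only difference is cosmetic: your observation that $e_1^*\wedge u_2^* = e_1^*\wedge e_2^*$ unifies the two cases ($k,l\neq 2$ versus $l=2$) that the paper treats separately.
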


\begin{proof}
We index the rows and columns of the matrix $B$ naturally from $2$ to $n$, i.e. there is no row or column indexed by one. Let us fix $k,l$ as in the statement. Since $B$ is symmetric we may assume $k\neq n$. Then we distinguish two cases. Firstly, suppose $k,l\neq 2$. 

Note the corresponding basis of $(V/U_P)^*=U_P^\perp$, in which the matrix $B$ is written, is $-ae_1^*+e_2^*,e_3^*,\dots,e_n^*$.
From the description presented in Subsection~\ref{points-E_r}, we know that $$B_{kl}=(e_1^*\wedge e_k^*)^T P_2 (e_1^*\wedge e_l^*).$$

Now we consider any point $P'\in X(C_n)$ for which $P'_1$ is regular. Let  

$$M=\begin{pmatrix}
(e_1^*)^T\\
(e_k^*)^T
\end{pmatrix}
\cdot P'_1
\cdot
\begin{pmatrix}
e_1^*  e_l^*
\end{pmatrix}.$$

Then $$
(e_1^*\wedge e_k^*)^T P'_2 (e_1^*\wedge e_l^*)=\det(M).$$

Since the matrix $P'_1\in L(C_n)$ and $k\notin \{2,n\}$, the matrix $M$ has zeros in the last row. It follows that $\det(M)=0$.

This equation holds for all points $P'\in X(C_n)$ for which $P'_1$ is regular, therefore it must hold for all points from $X(C_n)$. Therefore, it holds also for the point $P$, and $B_{kl}=0$.

Now we consider the second case, when one of $k,l$ is equal to two, since $B$ is symmetric we may assume $l=2$.

In this case we have 
$$B_{kl}=(e_1^*\wedge e_k^*)^T P_2 (e_1^*\wedge (-ae_1^*+e_2^*)).$$

Analogously, as in the first case, we have $$(e_1^*\wedge e_k^*)^T P'_2 (e_1^*\wedge (-ae_1^*+e_2^*))=0,$$
for all regular matrices $P'\in X(C_n)$ and we obtain the desired conclusion, namely $B_{k2}=0$.
\end{proof}

\begin{Lemma} \label{lemma:XE1}
The variety $X(C_n)\cap E_1$ has $n$ components, each of them is isomorphic to an $X(C_{n-1})$-bundle over $\PP^1$, i.e.~the fiber over every point is isomorphic to $X(C_{n-1})$.
\end{Lemma}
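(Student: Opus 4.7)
My plan is to analyse $X(C_n)\cap E_1$ via the triple description $(A_P,U_P,B_P)$ of points of $E_1$ recalled in Subsection~\ref{points-E_r}. Since $\dim U_P=1$, the factor $A_P\in\CQ(U_P)$ is a single point and $P$ is determined by the pair $(U_P,B_P)$; I therefore study the projection $P\mapsto U_P$ from $X(C_n)\cap E_1$ to $G(1,V)=\PP(V)$. Because $\pi_1(P)=uu^T$ and the condition $uu^T\in L(C_n)$ forces $u_iu_j=0$ for every non-edge $\{i,j\}$ of $C_n$, the vector $u$ has at most two non-zero coordinates, and if two then they sit at adjacent positions of the cycle. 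Hence the image of the projection is the union of $n$ lines $\ell_i:=\PP(\spann(e_i,e_{i+1}))\cong\PP^1$, one per edge of $C_n$. Each $\ell_i$ will be the base of one of the $n$ promised components, and by the $D_n$-symmetry of $X(C_n)$ it is enough to treat $\ell_1$, writing $U=\langle e_1+ae_2\rangle$ with $a\in\PP^1$.

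For the fiber over such a fixed $U$, Lemma~\ref{lowrankequation} already shows that $(B_P)_1$, written in the basis $\widehat e_2,\dots,\widehat e_n$ of $V/U$, lies in the linear subspace $L(C_{n-1})\subset S^2(V/U)$ associated to the cycle obtained by contracting the edge $(1,2)$. To finish I must upgrade this to $B_P\in X(C_{n-1})\subset\CQ(V/U)$ and establish the converse, namely that every such $B$ arises from some $P\in X(C_n)\cap E_1$. For both directions I plan to use the explicit family
\[
\tilde M_t \;=\; \tfrac{1}{t}\,uu^T + \hat M\in L^\circ(C_n),\qquad u=e_1+ae_2,\qquad t\neq 0,
\]
where, for a given regular $M\in L^\circ(C_{n-1})$, the lift $\hat M\in L(C_n)$ is chosen by setting its entries on rows and columns indexed by $\{3,\dots,n\}$ equal to the corresponding entries of $M$, the entries $\hat M_{12}$ and $\hat M_{1n}$ so as to produce the correct $M_{22}$ and $M_{2n}$ after the change of basis between $V/U$ and $\spann(e_2,\dots,e_n)$, and the remaining entries $\hat M_{11}$, $\hat M_{22}$ subject to one linear constraint. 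A direct minor expansion shows that $\bigwedge^{r}\tilde M_t$ has a leading term in $t$ of order $t^{-(r-1)}$, and after the normalisations prescribed by the identifications of Subsection~\ref{points-E_r} this leading term matches $\phi(M)_{r}$; hence $\lim_{t\to 0}\phi(\tilde M_t)$ is the point of $E_1$ with triple $(A,U,\phi(M))$. Letting $M$ vary in $L^\circ(C_{n-1})$ and $a$ vary in $\PP^1$ and taking closures produces an $X(C_{n-1})$-bundle over $\ell_1$ contained in $X(C_n)\cap E_1$; the reverse inclusion over this base follows from Lemma~\ref{lowrankequation} combined with the definition of strict transform.

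Rotating by the $D_n$-action yields analogous bundles over $\ell_2,\dots,\ell_n$ whose union exhausts $X(C_n)\cap E_1$. Each is irreducible, being a bundle with irreducible base $\PP^1$ and irreducible fiber $X(C_{n-1})$, and they project onto distinct lines of $\PP(V)$, so none is contained in the union of the others; they are therefore exactly the $n$ irreducible components claimed. The main obstacle I foresee is the minor-expansion step for general $r$: one has to verify that the leading $t$-behaviour of every $\bigwedge^{r}\tilde M_t$ matches the prescribed $\phi(M)_r$ under the somewhat intricate identifications of Subsection~\ref{points-E_r}, and not only the case $r=1$ that Lemma~\ref{lowrankequation} already handles. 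This calculation is where the full complete-quadrics geometry, rather than the purely linear statement of Lemma~\ref{lowrankequation}, does the actual work.
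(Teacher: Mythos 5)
Your overall strategy coincides with the paper's: project to $G(1,V)$ to obtain the $n$ lines $\ell_i$, use Lemma~\ref{lowrankequation} for the necessary condition on $(B_P)_1$, and prove the converse by degenerating a one-parameter family of the form $uu^T+t\hat M$ inside $L(C_n)$ (projectively the same family the paper uses). However, two steps are genuinely incomplete. First, your claim that the reverse inclusion ``follows from Lemma~\ref{lowrankequation} combined with the definition of strict transform'' does not suffice when $B_P$ is a singular point of $\CQ(V/U)$: Lemma~\ref{lowrankequation} only constrains the first projection $(B_P)_1$, i.e.\ it places $B_P$ in the \emph{total} transform of $L(C_{n-1})$, whereas the statement requires $B_P\in X(C_{n-1})$, the \emph{strict} transform. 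The paper closes this with a separate argument: given a sequence $A_k\in L^\circ(C_n)$ converging to $(U,B)$, it applies the rational map $\psi:\PP(S^2(V))\dashrightarrow\PP(S^2(V/U))$ to produce points $(U,\psi(A_k))\in X(C_n)\cap E_1$ with \emph{regular} second component converging to the same limit, showing the fiber is the closure of its regular locus and hence equals $X(C_{n-1})$. Some version of this step is needed.

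Second, your lift $\hat M$ implicitly requires dividing by $a$: since $(2,n)$ is not an edge of $C_n$, the only way to produce the entry $M_{2n}$ in the quotient by $U=\langle e_1+ae_2\rangle$ is through the $(1,n)$ slot scaled by $-1/a$, so the construction breaks down at the points $U=\langle e_1\rangle$ and $U=\langle e_2\rangle$ of $\ell_1$. The paper treats the case $\alpha=0$ separately with a family $C(\lambda)$ using mixed powers $\lambda,\lambda^2$; without this (or an argument that the closure of the bundle over $\ell_1\setminus\{e_1,e_2\}$ already has the correct fibers over the two missing points, which is not automatic), the claimed bundle structure over all of $\PP^1$ is not established. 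The deferred leading-order computation for $\bigwedge^{r}\tilde M_t$ is the same verification the paper also compresses, so I would not count that as a gap by itself.
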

\begin{proof}
Every point $P\in E_1$ can be represented as a pair $(U_P,B_P)$ where $U_P\in G(1,V)$ and $B_P\in \CQ(V/U_P)$. We do not need to specify $A_P$  because the space of complete quadrics over 1-dimensional space is just a point.

If $P\in X(C_n)\cap E_1 $, then $U_P$ is the image of a rank 1 matrix in $L(C_n)$.
By Lemma \ref{lemma:lowrank}, such a matrix is in a cyclic-block form, and therefore its image is of the type $[ae_i+be_{i+1}]$, for some $1\le i\le n$. This means that the projection of $X(C_n)\cap E_1$ to $G(1,V)$ consists of (at most) $n$ lines $\PP^1$.  We now describe the $n$ isomorphic components, each one mapping to a fixed $\PP^1$.

 We fix the space $U$, which is without loss of generality generated by $u=e_1+\alpha e_2$. 
Consider a point $B\in \CQ(V/U)$, such that $B$ is a regular matrix, written in the basis $\widehat{e_2},\widehat{e_3},\dots,\widehat{e_n}$ with rows and columns indexed from $2$ to $n$.  We know, by Lemma~\ref{lowrankequation}, that the necessary condition for $(U,B)\in X(C_n)$ is $B_{ij}=0$, for all $n-2>|i-j|>1$. We will show that this condition is also sufficient. For this we fix a regular matrix $B$ satisfying the condition and our goal is to prove that $(U,B)\in X(C_n)$.

First, assume $\alpha\neq 0$. Consider a matrix $C\in S^2(V)$, given as follows: $C_{ij}=B_{ij}$ for $i,j\ge 2,(i,j)\notin\{ (2,n),(n,2)\}$;
$C_{1n}=C_{n1}=-\frac{1}{\alpha}B_{2n}$ and the rest of the entries of $C$ are 0. Clearly, $C\in L(C_n)$ and when we restrict $C$ to an element of $S^2(V/U)$ we get $B$.
 
Let $A$ be the unique (up to scaling) symmetric matrix with image $U$. Then $A+\lambda C\in L(C_n)$ and $A+\lambda C$ converges to $(U,B)$ when $\lambda\rightarrow 0$, therefore $(U,B)\in X(C_n)$.

Now, assume $\alpha=0$. 
We construct a sequence of matrices in $L(C_n)$ which converges to $(U,B)$. In this case we may assume $A_{11}=1$ and other entries of $A$ are 0. We consider matrices $C(\lambda)$, given as follows:
\begin{itemize}
    \item $C(\lambda)_{ij}=\lambda^2 B_{ij}$ for all $i,j\ge 2; (i,j)\notin\{ (n,2),(2,n),(2,2),(n,n)\},$
    \item $C(\lambda)_{12}=C(\lambda)_{21}=-\lambda$,
    \item $C(\lambda){1n}=C(\lambda)_{n1}=\lambda B_{2n}$,
    \item $C(\lambda)_{22}=B_{22}+\lambda^2$,
    \item $C(\lambda)_{nn}=B_{nn}+\lambda^2 B_{2n}^2$,
    \item all other entries are zero.
\end{itemize}

One can check that also in this case the sequence $A+C(\lambda)$ converges to $(U,B)$ in $\CQ(V)$.

Therefore, in all cases we prove that $(U,B)\in X(C_n)$. That means that for the fixed one-dimensional subspace $U$, the points $(U,B)\in X(C_n)\cap E_1$, where $B$ is a regular matrix, are precisely those where the matrix $B$ belongs to the space for the cycle $L(C_{n-1})$ inside the space $V/U$.

We keep the space $U$ fixed and consider a point $(U,B)\in X(C_n)\cap E_1$ such that $B\in\CQ(V/U)$ is not a regular matrix. Since $(U,B)\in X(C_n)$, there exists a sequence of matrices $\{A_k\}$ such that $A_k\in L(C_n)$ and \{$A_k$\} converges to $(U,B)$. We may assume $(u^*)^TA_iu^*=1$ for all $i$, since $A_i$ converges to A in $\PP(S^2 (V))$. There is a rational map $\psi:\PP(S^2(V))\dashrightarrow \PP(S^2(V/U)$, which is a composition of natural maps $\PP(S^2(V))\dashrightarrow\PP(S^2(V\wedge V))$ and $\PP(S^2(V\wedge V))\dashrightarrow \PP(S^2(V/U)$. Note that this map is defined on all regular matrices $M$ with the property $(u^*)^TMu^*\neq 0$, and the  image of such a matrix is a regular matrix.

Now we consider the sequence of points in $\CQ(V)$ defined by $\{(U,\psi(A_k))\}$. Clearly, $(U,\psi(A_k))\in X(C_n)\cap E_1$. We claim that this sequence also converges to $(U,B)$. The coordinates in $\PP(S^2 (V))$ are constant - they are equal to the unique matrix $A$ with the image $U$. For all $k$, all non-zero coordinates in $\PP(S^2(\bigwedge^i V))$, i.e. those which correspond to coordinates in $\PP(S^2(\bigwedge^{i-1} (V/U)))$, are the same for $A_k$ and $(U,\psi(A_k))$ by the definition of $\psi$. Thus, on these coordinates the  sequence $\{(U,\psi(A_k))\}$ converges to the same point as the sequence $\{A_k\}$ which is the point $(U,B)$. To sum up, it converges in all coordinates, when we regard $\CQ(V)$ as being embedded in the product of projective spaces.

This means that the set of all points $(U,B)$ which are inside $X(C_n)\cap E_1$ is the closure of all such points where $B$ is a regular matrix. Moreover, we know that for regular matrix $B\in S^2(V/U)$ the point $(U,B)\in X(C_n)\cap E_1$ if and only if $B\in L(C_{n-1})$. This means that for any $B\in \CQ(V/U)$, the point $(U,B)\in X(C_n)\cap E_1$ if and only if $B\in X(C_{n-1})$ (in the appropriate basis). Thus, the fiber over $U$ is isomorphic to $X(C_{n-1})$.
\end{proof}

For $1\le i\le n$, let us denote by $\ell_i$ the line in $\PP(V)$ which is spanned by $e_i$ and $e_{i+1}$, where $e_{n+1}:=e_1$. We identify these lines as the edges of the graph $C_n$ living in $\PP(V)$. Indeed, when we draw them in $\PP(V)$ we get the picture of $C_n$. For an edge $e$ of $C_n$ we can also denote $\ell_e$ the corresponding line in $\PP(V)$.

For the description of the intersection of $X(C_n)$ with more exceptional divisors $E_r$ we will consider the subgraphs $\Gamma$ of $C_n$ which have $n$ vertices and less than $n-1$ edges. We denote by $E(\Gamma)$ the set of edges of $\Gamma$. Such graphs are unions of disjoint paths. For a graph $\Gamma$ we will denote by $\gamma_1,\dots,\gamma_k$ all positive lengths of these paths. Clearly, $\gamma_1+\dots+\gamma_k=|E(\Gamma)|$. We denote the set of all subgraphs of $C_n$ with $n$ vertices by $\Sigma_n$. We denote the subset of all such graphs which do not contain the edge $(1,n)$ by $\Sigma'_n$.
We will also work with the graphs which have their edges labeled by numbers $1,\dots,r$, where $r$ is the number of edges in the graph. We denote the set of all labeled subgraphs with $n$ vertices of $C_n$ by $\tilde{\Sigma}_n$. Also for a labeled graph $\tilde{\Gamma}\in \tilde{\Sigma}_n$, we denote by $\gamma_1,\dots,\gamma_k$ the lengths of the paths.
Moreover, we denote by $f_{\tilde{\Gamma}}:[r]\rightarrow E(C_n)$, the labeling function corresponding to a labeled graph $\tilde{\Gamma}$, i.e. for number $i$ it returns the edge with label $i$.

\begin{Corollary}\label{cor:E1..Er}
Let $r\le n-3$. Then  $X(C_n)\cap E_1\cap E_2\cap \dots \cap E_r$ has $n(n-1)\dots(n-r+1)$ components. Each component corresponds to a labeled graph $\tilde{\Gamma}\in\tilde{\Sigma}_n$ with $r$ edges.
We fix one of these components given by the graph $\tilde{\Gamma}$. Consider an open subset of this component which is formed by the points which do not lie in the other components. Then a point in this open subset is uniquely determined by the choice of $r$ points $p_1,\dots,p_r\in\PP(V)$, such that $p_i\in\ell_{f_{\tilde{\Gamma}}(i)}$, $p_i\neq e_j$ and $A\in X(C_{n-r})$, where $X(C_{n-r})\subset \CQ(V/(p_1,\dots,p_r))$.
\end{Corollary}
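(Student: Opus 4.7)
The plan is to proceed by induction on $r$, with the base case $r=1$ supplied directly by Lemma \ref{lemma:XE1}. Indeed, that lemma already gives $n$ components of $X(C_n)\cap E_1$, one for each edge $\ell_i$ of $C_n$ (i.e.\ for each $\tilde{\Gamma}\in\tilde{\Sigma}_n$ with a single labeled edge), and identifies a generic point with the data $(p_1,A)$ where $p_1\in\ell_i$ with $p_1\neq e_i,e_{i+1}$ and $A\in X(C_{n-1})\subset\CQ(V/\langle p_1\rangle)$. In the basis $\widehat{e_{i+2}},\widehat{e_{i+3}},\dots,\widehat{e_{i-1}}$ of $V/\langle p_1\rangle$, the edges of the smaller cycle $C_{n-1}$ correspond naturally to the edges of $C_n$ other than $\ell_i$, with the two neighbors $e_{i-1},e_{i+2}$ becoming adjacent.

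For the inductive step, suppose the claim holds for $r-1$. A point of $X(C_n)\cap E_1\cap\cdots\cap E_r$ in particular lies in $X(C_n)\cap E_1\cap\cdots\cap E_{r-1}$, so by the induction hypothesis it lies on a component indexed by a labeled subgraph $\tilde{\Gamma}'\in\tilde{\Sigma}_n$ with $r-1$ edges, with generic parametrization
\[
 (p_1,\dots,p_{r-1};\, A),\qquad A\in X(C_{n-r+1})\subset \CQ\bigl(V/\langle p_1,\dots,p_{r-1}\rangle\bigr).
\]
By the triple description of Subsection~\ref{points-E_r} and Lemma~\ref{lemma:recursion-XE_r}, the additional condition of belonging to $E_r$ in the ambient $\CQ(V)$ translates to $A$ belonging to $E_1$ of the smaller complete quadrics $\CQ(V/\langle p_1,\dots,p_{r-1}\rangle)$. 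Applying Lemma~\ref{lemma:XE1} to $C_{n-r+1}$ (which is allowed since $r\le n-3$ forces $n-r+1\ge 4$), the intersection $X(C_{n-r+1})\cap E_1$ has exactly $n-r+1$ components, one for each edge of $C_{n-r+1}$. Under the edge correspondence established in the base case, these match bijectively with edges of $C_n$ not already used in $\tilde{\Gamma}'$, and picking one furnishes a new point $p_r$ and refines $A$ to some $A'\in X(C_{n-r})\subset\CQ(V/\langle p_1,\dots,p_r\rangle)$. Labeling the newly selected edge with $r$ produces the desired $\tilde{\Gamma}\in\tilde{\Sigma}_n$ with $r$ labeled edges.

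The component count is immediate: for each labeled graph $\tilde{\Gamma}'$ with $r-1$ edges we obtain $n-r+1$ new components, yielding $n(n-1)\cdots(n-r+2)\cdot (n-r+1)=n(n-1)\cdots(n-r+1)=\binom{n}{r}r!$, which matches the total number of labeled $r$-edge subgraphs of $C_n$. The open condition $p_i\neq e_j$ (for all vertices $e_j$) is precisely what guarantees that the point lies in a unique component: if some $p_i$ coincides with a vertex of $C_n$, then $\langle p_1,\dots,p_r\rangle$ would be spanned by fewer directions than a generic choice, pushing the point into the closure of a component corresponding to a different labeled subgraph.

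I expect the main obstacle to be the rigorous passage from $E_r\cap E_1\cap\cdots\cap E_{r-1}$ inside $\CQ(V)$ to $E_1$ inside the smaller $\CQ(V/\langle p_1,\dots,p_{r-1}\rangle)$; this is where Lemma~\ref{lemma:recursion-XE_r} must be applied carefully, using the product structure $E_r=\CQ_r(\fU)\times_{G(r,n)}\CQ_{n-r}(\fQ^*)$ together with the fact that the $B_P$ factor inherits its own sequence of exceptional divisors compatible with the ambient ones. A secondary subtlety is verifying that the combinatorial edge identification between $C_{n-r+1}$ and the unused edges of $C_n$ is preserved under iteration, so that the final labeled graph $\tilde{\Gamma}$ is a well-defined element of $\tilde{\Sigma}_n$ independent of the order in which the edges are produced.
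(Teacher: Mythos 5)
Your proposal is correct and follows essentially the same route as the paper: induction on $r$, using Lemma~\ref{lemma:XE1} at each step to identify the $n-r$ new components of $X(C_{n-r})\cap E_1'$ in the quotient $\CQ(V/(p_1,\dots,p_{r-1}))$ with the unused edges of $C_n$, and lifting the new point from the quotient line back to $\PP(V)$ via the condition $p_i\neq e_j$. The only cosmetic difference is that you invoke Lemma~\ref{lemma:recursion-XE_r} for the passage to $E_1$ of the smaller space, whereas the paper (which proves this corollary before that lemma) treats it as following directly from the recursive fiber-product description of the exceptional divisors.
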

\begin{proof}
We prove it by induction on $r$. For $r=0$ there is nothing to prove. Assume that the statement holds for $r$. Then to get a point in $X(C_n)\cap E_1\cap E_2\cap \dots \cap E_{r+1}$ we need to have a graph $\tilde{\Gamma}\in\tilde{\Sigma}_n$, points $p_1,\dots,p_r\in\PP(V)$, such that $p_i\in\ell_{f_{\tilde{\Gamma}}(i)}$, $p_i\neq e_j$ and $A\in X(C_{n-r})\cap E'_1$, where $E'_1$ is the first exceptional divisor in $\CQ(V/(p_1,\dots,p_r))$. Now we use the Lemma~\ref{lemma:XE1} to see that $X(C_{n-r})\cap E'_1$ has $n-r$ components which naturally correspond to the edges of $C_n$ which were not chosen yet. Thus, to get the point here we need to choose one such edge and a point $p_{r+1}\in \ell$, where $\ell $ is the corresponding line in the quotient space $\PP(V/(p_1,\dots,p_r))$. However, the line $\ell$ is the image of line $\ell_{r+1}$ under the quotient map $\PP(V)\rightarrow \PP(V/(p_1,\dots,p_r))$. Thus, we can lift the point $p_{r+1}$ to the point in $\ell_{r+1}\subset \PP(V)$, since all $p_i$ are distinct from $e_j$.
\end{proof}

\begin{Lemma}\label{lemma:recursion-XE_r}
Let $1\le r \le n-3$. Let $U$ be an $r$-dimensional subspace which is the image of some matrix in $L(C_n)$ and $A\in \CQ(U)$. Then the set of all points $B\in \CQ(V/U)$ such that $(A,U,B)\in X(C_n)$ is a subset of $X(C_{n-r})\subset\CQ(V/U)$.
\end{Lemma}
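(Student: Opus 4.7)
The plan is to proceed by induction on $r$, with the base case $r = 1$ provided by Lemma~\ref{lemma:XE1}, which directly yields $B \in X(C_{n-1})$. For the inductive step, I fix $2 \le r \le n-3$ and a point $P = (A, U, B) \in X(C_n) \cap E_r$, aiming to show that $B \in X(C_{n-r})$.

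First, using the $D_n$-invariance of $X(C_n)$ together with Corollary~\ref{lemma:blockform}, I may assume that $U$ is the image of a rank-$r$ matrix of $L(C_n)$ in block form of type $T = (t_1, \ldots, t_{n-r})$. Since $r = \sum_i (t_i - 1) \ge 2$, some block satisfies $t_i \ge 2$; intersecting its $(t_i - 1)$-dimensional image with the coordinate plane $\spn(e_j, e_{j+1})$ for two consecutive indices $j, j+1$ from this block yields a $1$-dimensional subspace $U_1 = \spn(e_j + \alpha e_{j+1}) \subset U$, which by Lemma~\ref{lemma:lowrank} is itself the image of a rank-$1$ matrix of $L(C_n)$.

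The crux is then to produce a point $P' = (A', U, B) \in X(C_n) \cap E_1 \cap E_r$ whose $E_1$-flag is $U_1$, whose $E_r$-subspace is $U$, and whose $B$-coordinate equals the given $B$. Starting from a sequence $M^{(k)} \in L(C_n)^\circ$ with $\phi(M^{(k)}) \to P$, I would perturb this sequence so that its limit refines in the $E_1$-direction towards $U_1$, while the projective convergence of the wedge powers $\wedge^{r+i} M^{(k)}$ for $i \ge 1$ (and hence of the $B$-coordinate, via the formulas from Subsection~\ref{points-E_r}) is preserved. Given such a $P'$, Lemma~\ref{lemma:XE1} supplies an $E_1$-component $B^{(1)} \in X(C_{n-1}) \subset \CQ(V/U_1)$; since $P' \in E_r$ as well, $B^{(1)}$ lies in the exceptional divisor of $\CQ(V/U_1)$ associated to the $(r-1)$-dimensional subspace $U/U_1$, with transverse component equal to $B \in \CQ((V/U_1)/(U/U_1)) = \CQ(V/U)$. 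The inductive hypothesis applied to $B^{(1)}$ with parameters $(n-1, r-1)$ (valid since $r-1 \le (n-1) - 3$) then gives $B \in X(C_{(n-1)-(r-1)}) = X(C_{n-r})$, closing the induction.

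The main technical obstacle is the \emph{flag reduction} sketched above: justifying that the $B$-coordinate is preserved when the sequence is perturbed so that its limit refines to a point in $E_1 \cap E_r$ with the prescribed $U_1 \subset U$. I would settle this by analyzing how $X(C_n) \cap E_r$ sits inside $E_r \cong \CQ(\mathcal{U}) \times_{G(r,n)} \CQ(\mathcal{Q})$ over a fixed $U$, using the fact that $X(C_n)$ is the closure of $\phi(L(C_n)^\circ)$ and that rank-$1$ images of $L(C_n)$ contained in $U$ form a non-empty closed subvariety of $\PP(U)$, so the fiber over $U$ cannot avoid the exceptional divisor $E'_1 \subset \CQ(U)$ corresponding to $U_1$; alternatively, one can bypass the flag reduction by directly establishing the commented-out generalization of Lemma~\ref{lowrankequation} and combining it with a density argument on full-rank $(B_P)_1$.
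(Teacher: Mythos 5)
Your overall strategy --- induct on $r$, reduce a point of $X(C_n)\cap E_r$ to a point of $X(C_n)\cap E_1\cap E_r$ whose $E_1$-datum is a line $U_1\subset U$ spanned by a vector with two consecutive nonzero coordinates, invoke Lemma~\ref{lemma:XE1}, and then apply the inductive hypothesis inside $\CQ(V/U_1)$ --- is exactly the route the paper takes. But the step you label ``the main technical obstacle'' is the entire content of the proof, and neither of the justifications you offer for it works. The paper establishes the flag reduction by an explicit construction: it writes the approximating sequence of regular matrices $A^k\in L(C_n)$ in the basis $u,e_2,\dots,e_n$ (normalized so the $(1,1)$-entry is $1$), rescales all other entries by $\lambda_k\to 0$ (with corrections in the second and $n$-th rows and columns so as to stay inside $L(C_n)$), and then verifies by a minor-by-minor comparison that the limit $S$ of the new sequence satisfies both $\rank S_1=1$ with image $[u]$ \emph{and} $S_j=P_j$ for all $j\ge r$ --- the latter being precisely what guarantees that the $B$-coordinate survives the perturbation. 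Your first fallback (``the fiber over $U$ cannot avoid the exceptional divisor $E_1'\subset\CQ(U)$'') only produces \emph{some} point $(A'',U,B'')\in X(C_n)\cap E_1\cap E_r$; since the fiber of $X(C_n)\cap E_r$ over a fixed $U$ need not be a product of its $\CQ(U)$- and $\CQ(V/U)$-parts, this gives no control on $B''$ and hence does not yield the required point with the \emph{given} $B$. Your second fallback (the commented-out generalization of Lemma~\ref{lowrankequation}) would only place $(B_P)_1$ in the linear span of the smaller cycle, i.e.\ in the total transform of $L(C_{n-r})$, not in the strict transform $X(C_{n-r})$, so it proves a strictly weaker statement.

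A second genuine omission: you never treat the case where $A\in\CQ(U)$ is not a regular quadric. The perturbation you sketch (and the paper carries out) relies on $A$ being interpretable as a rank-$r$ symmetric matrix with $u$ in its image, so that the $(1,1)$-entry in the adapted basis is nonzero; for boundary points of $\CQ(U)$ this breaks down. The paper handles this case separately: such a point already lies in $E_i$ for $i=\dim(\Im(A_1))<r$, so one rewrites it as $(A_1,U',B')$ with $U'=\Im(A_1)$ and applies the inductive hypothesis twice, once to place $B'$ in the smaller cycle and once more inside $\CQ(V/U')$. You cannot dismiss this case by a closure argument, because the conclusion is a statement about every individual $A$, and the fiber of $X(C_n)\cap E_r$ over $U$ need not equal the closure of its locus of regular $A$.
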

\begin{proof}

We will prove it by induction on $r$. The case $r=1$ is solved in Lemma \ref{lemma:XE1}. Now we assume that the statement holds for all $r'<r$. 

Assume that $A$ is not a regular matrix. Then we look at the smallest $i$ such that $(A,U,B)\in E_i$, i.e. $i=\dim(\Im(A_1))$. Thus we can write $(A,U,B)$ as the element of $E_i$ in the form $(A_1,U',B')$, where $U'=\Im(A_1)$, and $B'\in \CQ(V/U')$. From the induction hypothesis we know that $B'$ lies in the space for the smaller cycle. Since $(A,U,B)\in E_r$, we know that $B'$ lies in the $(r-i)$-th exceptional divisor in $\CQ(V/U')$. Thus, we can write $B'$ in the form $(A',U,B)$.  We apply induction once again for the point $B'$ to get that $B$ is in the space for the smaller cycle.

Now we assume that $A$ is a regular matrix. We can also interpret $A$ as the element of $S^2(V)$, such that the rank of $A$ is $r$. By Lemma \ref{lemma:lowrank} the matrix $A$ is in a cyclic-block form. Thus in its image $U$ there exists a vector with just two consecutive non-zero coordinates, since it is the first vector of some block. Let us pick such vector and denote it by $u$. Without loss of generality we may assume $u=e_1+ae_2$.

Since $P:=(A,U,B)\in X(C_n)$ there must exist a sequence of regular matrices $\{A^k\}$ of $L(C_n)$ which converges to $(A,U,B)$. Here we are using $k$ as an upper index, we will not work with any powers of matrices. We will write these matrices in the basis $u,e_2,e_3,\dots,e_n$. In this basis we have $A_{11}\neq 0$, thus we may assume $A_{11}=1$. We know that $A^k$ converges to $A$ and since we are working projectively, we may also assume $A^k_{11}=1$ for all $k$. Note that in this basis we have $M\in L(C_n)$ if and only if $M_{ij}=0$, for all $n-1>|i-j|>1, (i,j)\notin \{(2,n),(n,2)\}$ and $M_{2n}=M_{n2}=-aM_{1n}=-aM_{n1}$.

Let us assume $a\neq 0$. Now we construct the sequence $\tilde{A}^k$ of regular matrices in $L(C_n)$ as follows:

\begin{itemize}
    \item $\tilde{A}^k_{11}=1$,
    \item $\tilde{A}^k_{22}=\lambda_k(A^k_{22}-(A^k_{12})^2)$,
    \item $\tilde{A}^k_{nn}=\lambda_k(A^k_{nn}-(A^k_{1n})^2)$,
    \item $\tilde{A}^k_{2n}=\tilde{A}^k_{n2}=\lambda_k(A^k_{2n}-A^k_{12}A^k_{1n})$,
    \item $\tilde{A}^k_{1n}=\tilde{A}^k_{n1}=-\frac 1a \tilde{A}^k_{2n}$
    \item $\tilde{A}^k_{ij}=\lambda_kA^k_{ij}$ for other $(i,j)$.
\end{itemize}

where $\lambda_k$ are sufficiently small and $\{\lambda_k\}$ converges to 0. We claim that the limit $S\in \CQ(V)$ of this sequence satisfies $\rank(S_1)=1$ and image of $S_1$ is generated by $u$. This is clear since the only entry of $\tilde{A}^k$ which is not multiple of $\lambda_k$ is $\tilde{A}^k_{11}$. Thus, it is the only nonzero entry of $S_1$. This means that $S\in E_1$.

Moreover, we claim that $S_j=P_j$ for $j\ge r$. To show this, we will compare $j\times j$ minors of matrices $A^k$ and $\tilde A^k$. Since the image of $P_1$ and $S_1$ contains $u$, we only have to consider the minors which contain both first row and column.

We start with the matrices $\tilde A^k$. We note that the only products of $j$ entries of $\tilde A^k$ which do not contain $\lambda_k^j$ are those which contain the entry $\tilde A^k_{11}$. In these products appears only $\lambda_k^{j-1}$. Thus, in the limit survive only products which contain $\tilde A^k_{11}$. Thus the limit of $j\times j$ minors of the sequence $\{\tilde A_k\}$ is just the limit of $(j-1)\times(j-1)$ minors of submatrices which we obtain by deleting the first row and column.

Next we consider the matrices $A^k$. Since we are considering minors with the first row and column we may add a multiple of the first row (column) to any other row (column) and this will not change these minors. Thus we will do such operations to kill the entries $A^k_{12},A^k_{1n},A^k_{21},A^k_{n1}$. This means we perform the following operations: we subtract $A^k_{12}$-multiple of first row (column) to the second row (column) and $A^k_{1n}$-multiple of the first row (column) to the last row (column).

Let us denote the resulting matrix by $A'^k$. It will look as follows:

\begin{itemize}
    \item $A'^k_{12}=A'^k_{21}=A'^k_{1n}=A'^k_{n1}=0$,
    \item $A'^k_{22}=A^k_{22}-(A^k_{12})^2$,
    \item $A'^k_{nn}=A^k_{nn}-(A^k_{1n})^2$,
    \item $A'^k_{2n}=A'^k_{n2}=A^k_{2n}-A^k_{12}A^k_{1n}$,
    \item $A'^k_{ij}=A^k_{ij}$ for other $(i,j)$.
\end{itemize}

All $j\times j$ minors containing both first row and column of this matrix are equal to the $(j-1)\times (j-1)$ minors of the matrix which we obtain by deleting first row and column.

This means that in both sequences, $\{A^k\}$ and $\{\tilde A^k\}$, we are computing the limit of the same minors, only in $\tilde A^k$ all are multiplied by $\lambda_k^{j-1}$. Thus, $S_j=P_j$ for all $j\ge r$.

In the case $a=0$, this construction fails but we can still construct a sequence $\{\tilde A^k\}$ as follows:

\begin{itemize}
    \item $\tilde{A}^k_{11}=1$,
    \item $\tilde{A}^k_{1i}=\tilde{A}^k_{i1}=\lambda_k(A^k_{1i})$ for all $2\le i\le n$,
    \item $\tilde{A}^k_{ij}=\lambda_k^2A^k_{ij}$ for all $2\le i,j,\le n$.
\end{itemize}

In a similar way, one can prove that, also in this case, the limit $S$ of this sequence satisfies $S_j=P_j$ for $j\ge r$ and $S_1$ has rank one and image $[u]$, but we omit the technical details.

Since $S\in E_1$, we can write $S=(A_S,[u],B_S)$, where $B_S\in X(C_{n-1})\subset\CQ(V/[u])$. Moreover, since $S_j=P_j$ for all $j\ge r$ we can write $B_S=(A_{B_S},U,B)$. From induction hypothesis for $B_S$ it follows that $B$ is in the space for smaller cycle which concludes the lemma.

\end{proof}
\begin{Remark}
We believe that in a similar way one can show that in fact in Lemma \ref{lemma:recursion-XE_r} equality holds, i.e.~that the fiber is equal to the space for smaller cycle. However, this is even more technical. As we do not need this for our main result, we omit the proof for the sake of compactness.
\end{Remark}

\subsection{Intersection of $X(C_n)$ with $E_{n-2}$}

\begin{Lemma}\label{n-2_uniqueA}
 Let $U\subset V$ be an $(n-2)$-dimensional subspace such that there is no non-zero vector with only two non-zero coordinates in $U$. Then there exists a unique matrix (up to a multiple) $A\in L(C_n)$ such that $\Im A=U$.
\end{Lemma}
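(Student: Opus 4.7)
The plan is to reduce the whole statement to a single dimension count. Fix a basis $u_1, u_2$ of $U^{\perp}$ and consider the linear map
$$\psi\colon L(C_n) \to V \oplus V, \qquad A \mapsto (Au_1, Au_2).$$
Its kernel is exactly the set of $A \in L(C_n)$ with $U^{\perp} \subseteq \ker A$, equivalently $\Im A \subseteq U$, so both existence and uniqueness will follow once we show $\dim \ker \psi = 1$ and that a non-zero element in the kernel has image equal to all of $U$. A first observation is that the symmetry of $A$ gives $u_1^T A u_2 = u_2^T A u_1$, so the vector $(-u_2, u_1) \in V \oplus V$ is orthogonal to $\Im \psi$, yielding $\dim \ker \psi \ge 1$ for free.

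For the matching upper bound, I describe $(\Im \psi)^{\perp}$ explicitly. Writing $A \in L(C_n)$ in its free entries $a_1, \ldots, a_n, b_1, \ldots, b_n$ and collecting coefficients in $v_1^T A u_1 + v_2^T A u_2 = 0$, the orthogonality condition splits into $n$ ``diagonal'' equations $u_{1,i} v_{1,i} + u_{2,i} v_{2,i} = 0$ and $n$ ``edge'' equations $u_{1,i} v_{1,i+1} + u_{1,i+1} v_{1,i} + u_{2,i} v_{2,i+1} + u_{2,i+1} v_{2,i} = 0$ (indices mod $n$). Packaging $\mu_i := (u_{1,i}, u_{2,i}) \in \CC^2$ and $w_i := (v_{1,i}, v_{2,i}) \in \CC^2$, these become $\mu_i \cdot w_i = 0$ and $\mu_i \cdot w_{i+1} + \mu_{i+1} \cdot w_i = 0$.

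The hypothesis enters through two geometric equivalences: $\mu_i = 0 \iff e_i \in U$, and $\det(\mu_i \,|\, \mu_{i+1}) = 0 \iff$ there is a non-zero vector of the form $\alpha e_i + \beta e_{i+1}$ in $U$. Both are ruled out by assumption, so every $\mu_i$ is non-zero and every $\det(\mu_i \,|\, \mu_{i+1})$ is non-zero. The diagonal equations then force $w_i = t_i J\mu_i$ for scalars $t_i$, where $J(x, y) := (-y, x)$ generates the line $\mu_i^{\perp}$. Substituting into the edge equations yields $(t_i - t_{i+1}) \det(\mu_i \,|\, \mu_{i+1}) = 0$, so cyclic propagation around $C_n$ forces $t_1 = t_2 = \cdots = t_n$. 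Hence $(v_1, v_2)$ is a scalar multiple of $(-u_2, u_1)$, proving $\dim (\Im \psi)^{\perp} = 1$ and so $\dim \ker \psi = 1$.

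Finally, I check that a non-zero $A \in \ker \psi$ has image equal to all of $U$. If instead $\rank A \le n - 3$, Lemma~\ref{lemma:lowrank} would force $A$ into cyclic-block form, and picking any non-trivial block $B_k$ one finds that its first column equals $a_{z_{k-1}+1} e_{z_{k-1}+1} + b_{z_{k-1}+1} e_{z_{k-1}+2}$ (with $b_{z_{k-1}+1} \ne 0$ if $|B_k| \ge 2$, and just $a_{z_k} e_{z_k}$ if $|B_k| = 1$), producing a non-zero vector in $\Im A \subseteq U$ with only one or two non-zero consecutive coordinates, contradicting the hypothesis. Hence $\rank A = n - 2$ and $\Im A = U$. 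I expect the main obstacle to be identifying the correct geometric dictionary between the hypothesis on $U$ and the non-degeneracy of the $\mu_i$; once $\mu_i \ne 0$ and $\det(\mu_i \,|\, \mu_{i+1}) \ne 0$ are identified with the excluded vectors, the cyclic topology of $C_n$ does all the remaining work.
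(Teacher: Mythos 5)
Your proof is correct, but it takes a genuinely different route from the paper. The paper builds $A$ row by row: each row $a_i$ must lie in $U$ and satisfy the $n-3$ vanishing conditions from $L(C_n)$, which pins it down up to scalar (two independent solutions would combine to a forbidden vector in $U$); the rows are then glued by the symmetry conditions $a_{i,i+1}=a_{i+1,i}$, and the one remaining wrap-around condition $a_{1,n}=a_{n,1}$ is verified at the end via two vanishing $(n-1)\times(n-1)$ minors. You instead argue globally: since $\dim L(C_n)=2n=\dim(V\oplus V)$ and $\ker\psi=\{A:\Im A\subseteq U\}$, computing $(\Im\psi)^{\perp}$ exactly yields existence and uniqueness in one stroke, with the hypothesis on $U$ entering precisely as $\mu_i\neq 0$ and $\det(\mu_i\,|\,\mu_{i+1})\neq 0$, and the cyclic propagation $t_1=\cdots=t_n$ playing the role of the paper's consistency check. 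Your dictionary is sound (note that, as in the paper, ``only two non-zero coordinates'' must be read as ``at most two,'' so that $e_i\in U$ is also excluded), the identity $\mu_{i+1}\cdot J\mu_i=-\mu_i\cdot J\mu_{i+1}=\det(\mu_i\,|\,\mu_{i+1})$ checks out, and your final rank argument via Lemma~\ref{lemma:lowrank} parallels the paper's minor computation. The paper's construction is more elementary and explicitly produces $A$; your duality argument is cleaner in that it never has to resolve the wrap-around symmetry condition separately and makes visible exactly where the cycle topology and the genericity of $U$ are used. Both are complete proofs.
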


\begin{proof}
Let $a_1,\dots,a_n$ be the rows of $A$. We look at $a_1$. We know that $a_1\in U$ and $a_{1,3}=a_{1,4}=\dots=a_{1,n-1}=0$. This gives us together $n-1$ linear conditions on $a_1$. Thus there is at least one non-zero $a_1$ which satisfies all the conditions. If there were two linearly independent possibilities for $a_1$ then their suitable linear combination would be a vector in $U$ with only two non-zero coordinates, which is not possible. Thus, $a_1$ is uniquely determined up to a scalar multiple, and moreover $a_{1,1},a_{1,2},a_{1,n}$ are non-zero. After scaling, we may assume $a_{1,1}=1$. Analogously, $a_2$ is uniquely determined up to a scalar multiple, but moreover we want $a_{1,2}=a_{2,1}$, so we can pick $a_2$ such that this holds. We continue with $a_3,\dots, a_n$. In this way we uniquely construct $A$ which image lies in $U$. Moreover the minor of $A$ obtained by deleting first two columns and last two rows is equal to $a_{2,3}a_{3,4}\dots a_{n-2,n-1}a_{1,n}\neq 0$, thus the rank of $A$ is at least $n-2$. Thus, $\Im A=U$. To prove that $A\in L(C_n)$ we have to check only one more condition, namely $a_{1,n}=a_{n,1}$ which was not used during the construction. However, both $(n-1)\times(n-1)$ minors of $A$ obtained by deleting the first row and the last column, or the last row and the first column are 0, since $A$ is a rank $n-2$ matrix. If $a_{1,n}\neq a_{n,1}$ then these two minors are different which is a contradiction.
\end{proof}

\begin{Lemma}\label{n-2_anyB}

 Let $U\subset V$ be an $(n-2)$-dimensional subspace such that there is no non-zero vector with only two non-zero coordinates in $U$. Let $A\in L(C_n)$ be the unique matrix with $\Im A=U $. Let $B\in \CQ(V/U)=\PP(S^2 (V/U))$. If a point $P\in \CQ(V)$ is given by $(A_P,U_P,B_P)=(A,U,B)$, then $P\in X(C_n)$.
\end{Lemma}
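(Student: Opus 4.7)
The strategy mirrors the proof of Lemma~\ref{lemma:XE1}: I would exhibit, for a Zariski-dense subset of $B$'s, a sequence of regular matrices in $L(C_n)$ converging in $\CQ(V)$ to the triple $P = (A,U,B)$, and then appeal to the closedness of $X(C_n)$. Concretely, since $X(C_n) \subset \CQ(V)$ is closed, the set
\[
\mathcal{F} := \{B \in \CQ(V/U) : (A,U,B) \in X(C_n)\}
\]
is a closed subvariety of $\CQ(V/U) \cong \PP^2$, so it suffices to prove $B \in \mathcal{F}$ for non-degenerate $B$, since these form a Zariski-dense open.

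Fix such a $B$. The plan is to consider sequences of the form $A + \lambda C$ with $\lambda \to 0$ and $C \in L(C_n)$ to be chosen. After passing to a basis in which $A = \diag(\lambda_1,\ldots,\lambda_{n-2},0,0)$, every symmetric matrix decomposes into blocks with a bottom-right $2\times 2$ piece $C_{22}$; this piece coincides with the restriction $\rho(C) \in S^2(V/U)$, where $\rho \colon S^2(V) \to S^2(V/U)$ is the natural surjection dual to $U^\perp \hookrightarrow V^*$. A Schur-complement computation yields
\[
\det(A+\lambda C) = \lambda^2 \det(A') \det(C_{22}) + O(\lambda^3), \quad \operatorname{adj}(A+\lambda C) = \lambda\det(A')\begin{pmatrix} 0 & 0 \\ 0 & \operatorname{adj}(C_{22}) \end{pmatrix} + O(\lambda^2).
\]
Provided $C_{22}$ is invertible, $A + \lambda C$ is regular for generic small $\lambda \neq 0$; the projections $\pi_i(A+\lambda C) = [\bigwedge^i(A+\lambda C)]$ converge to $[\bigwedge^i A]$ for $i \leq n-2$ (these are non-zero since $\rank A = n-2$), and $\pi_{n-1}$ converges to the class of the matrix supported in the $U^\perp$-block with value $\operatorname{adj}(C_{22})$. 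Through the dictionary of Subsection~\ref{points-E_r}, the limit in $\CQ(V)$ is exactly $(A, U, \operatorname{adj}(C_{22}))$; since $\operatorname{adj}$ is an involution (up to scalar) on $\PP(S^2(V/U))$, choosing $C_{22} = \operatorname{adj}(B)$ makes the limit equal $(A,U,B)$, and invertibility of $C_{22}$ follows from that of $B$.

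It remains to produce $C \in L(C_n)$ with $\rho(C) = \operatorname{adj}(B)$, i.e., to prove that $\rho|_{L(C_n)} \colon L(C_n) \to S^2(V/U)$ is surjective. I would argue this by duality. Let $Q$ be a $2\times n$ matrix whose rows form a basis of $U^\perp$. The adjoint of $\rho|_{L(C_n)}$ sends $M \in S^2(U^\perp)$ to the functional $C \mapsto \tr(Q^T M Q \cdot C)$ on $L(C_n)$, which vanishes iff the symmetric matrix $Q^T M Q$ is supported off the diagonal and off the edges of $C_n$. If $M \neq 0$, writing $u_i \in \CC^2$ for the $i$-th column of $Q$, the diagonal conditions $u_i^T M u_i = 0$ force each $u_i$ onto the isotropic locus of $M$ (at most two lines through the origin); the edge conditions $u_i^T M u_{i+1} = 0$, combined with the connectedness of $C_n$, force all $u_i$ onto a single such line. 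The hypothesis on $U$ implies $e_i \notin U$ for every $i$, so each $u_i \neq 0$, and $Q$ would then have rank at most one---contradicting $\dim U = n-2$.

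I expect the main technical obstacle to be the careful identification, via the dictionary of Subsection~\ref{points-E_r}, of the projective limit of $\pi_{n-1}(A+\lambda C)$ with the element of $S^2(V/U)$ represented by $\operatorname{adj}(C_{22})$, i.e., checking that the block identification in a diagonalising basis is consistent with the intrinsic description of $B_P$ in terms of $P_{n-1}$. Once this compatibility and the surjectivity of $\rho$ are in place, the result follows.
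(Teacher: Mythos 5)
Your proposal is correct and its core is the same as the paper's: degenerate a pencil $A+\lambda C$ of regular matrices in $L(C_n)$ to $\lambda=0$ and invoke closedness of $X(C_n)$. The differences are in the sub-steps. First, where you prove surjectivity of $\rho|_{L(C_n)}\colon L(C_n)\to S^2(V/U)$ by a duality/isotropic-lines argument (which is correct, and uses the hypothesis on $U$ only through $e_i\notin U$), the paper short-circuits this: the hypothesis forces $\widehat{e_1},\widehat{e_2}$ to be a basis of $V/U$, so one may lift $B$ to the matrix $C$ supported on the $(1,2)$-block, which lies in $L(C_n)$ automatically because $(1,2)$ is an edge of $C_n$; the hypothesis is instead used to show $A+\lambda C$ is regular via a direct row-dependence argument. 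Second, you dispose of degenerate $B$ by Zariski density of the nondegenerate locus in $\mathcal F$, which is cleaner than the paper's remark that $\phi$ extends to rank-$(n-1)$ matrices. Third, a caveat on your identification of the limit: with the paper's dictionary from Subsection~\ref{points-E_r}, where $(B_P)_1(u^*,u'^*)=P_{n-1}(v_1^*\wedge\dots\wedge v_{n-2}^*\wedge u^*,\,v_1^*\wedge\dots\wedge v_{n-2}^*\wedge u'^*)$, the leading term of the relevant minor of $A+\lambda C$ is $\lambda\cdot\det\bigl(A(v_i^*,v_j^*)\bigr)\cdot C(u^*,u'^*)$, so the limit fiber coordinate is the restriction $\rho(C)=C_{22}$ itself, \emph{not} $\operatorname{adj}(C_{22})$; your extra adjugate comes from reading $\bigwedge^{n-1}M$ as the adjugate matrix in $S^2(V^*)$ (complementary indexing) rather than as a form on $\bigwedge^{n-1}V^*$. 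Since $\operatorname{adj}$ is a linear involution on $2\times 2$ symmetric matrices this does not affect your conclusion (you hit all nondegenerate $B$ either way), but the correct choice under the paper's conventions is simply $C_{22}=B$, which is exactly what the paper takes.
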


\begin{proof}
We write $B$ in the basis $\widehat{e_1}, \widehat{e_2}$. We denote by $C$ the matrix such that $C_{ij}=B_{ij}$ for $1\le i,j\le 2$ and $C_{ij}=0$ otherwise. Assume that $B_P$ is regular. We claim that $A+\lambda C$ is regular for all $\lambda\neq 0$.

Indeed, suppose that some linear combination of rows of $A+\lambda C$ is zero. If this linear combination involves nontrivially the first or second row, then by taking the same combination in $A$ we obtain a vector in $\Im A$ which has only two non-zero coordinates. If not, it is just a linear combination of the last $n-2$ rows of $A$. However, these can be dependent only if one of the numbers $A_{23},A_{34},\dots,A_{(n-1)n}$ is 0, which again implies that there is a vector with just two non-zero coordinates in $\Im A$, a contradiction.

Therefore, the matrix $A+\lambda C$ defines a unique point $P(\lambda):=\phi(A+\lambda C)$ in $\CQ(V)$. We can parametrize all coordinates of $P(\lambda)$ by $\lambda$. Since $X(C_n)$ is closed, the point $P(0)$, which is obtained by putting $\lambda=0$, is also in $X(C_n)$. It is straightforward to check that $P(0)$ corresponds to $(A,U,B)$, which shows that $(A,U,B)\in X(C_n)$.

If $B$ is not regular, then it has rank 1. In this case $A+\lambda C$ has rank $n-1$ and the proof follows in the same way, since the map $\phi$ from Definition~$\ref{CQ1}$ is actually well defined also for rank $n-1$ matrices, i.e. a rank $n-1$ matrix also determines a unique point in the space of complete quadrics.
\end{proof}

Now we sum up Lemmas~\ref{n-2_uniqueA} and \ref{n-2_anyB}, and we show the most important result of this subsection. We will say that a (possibly rational) map $f:X\rightarrow Y$ is an isomorphism on an open subset of $Y$, if there exists an open subset $U\subset Y$ such that $f:f^{-1}(U)\rightarrow U$ is an isomorphism.

\begin{Lemma}\label{lemma:En-2}
Let $\mathcal Q$ be the quotient bundle over Grassmannian $G(n-2,V)$. Let $\psi: X(C_n)\cap E_{n-2} \rightarrow S^2(\mathcal Q) $ be the map which sends $P=(A_P,U_P,B_P)$ to $(U_P,B_P)$ in $ S^2(\mathcal Q)$. Then this map is an isomorphism on an open subset of $S^2(\mathcal Q)$.
\end{Lemma}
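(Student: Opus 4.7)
The plan is to construct an explicit algebraic inverse of $\psi$ on a Zariski-open subset of the target, and to verify the mutual inverse property using Lemmas~\ref{n-2_uniqueA} and~\ref{n-2_anyB}. Let $\mathcal{O}_G \subset G(n-2, V)$ denote the open locus of subspaces $U$ satisfying the hypothesis of Lemma~\ref{n-2_uniqueA}, i.e. containing no non-zero vector with only two non-zero coordinates in the standard basis; its complement is a finite union of closed Schubert-type conditions, so $\mathcal{O}_G$ is open in $G(n-2, V)$. Restricting the bundle $S^2(\mathcal{Q})$ along the inclusion yields an open subset $\mathcal{O} \subset S^2(\mathcal{Q})$ consisting of pairs $(U, B)$ with $U \in \mathcal{O}_G$; I expect this to be the open set on which $\psi$ is an isomorphism.

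Over $\mathcal{O}$ I would define a morphism $\Phi \colon \mathcal{O} \to X(C_n) \cap E_{n-2}$ by the rule $(U, B) \mapsto (A_P, U, B)$, where $A_P \in \CQ(U)$ is obtained from the unique (up to scalar) matrix $A(U) \in L(C_n)$ with $\Im A(U) = U$ produced by Lemma~\ref{n-2_uniqueA}. The row-by-row construction in the proof of that lemma amounts to solving linear systems whose coefficients depend algebraically on the Pl\"ucker coordinates of $U$, so $U \mapsto A(U)$ is a morphism $\mathcal{O}_G \to \PP(L(C_n))$. Since $\rank A(U) = n-2$, the rational map $\phi$ of Definition~\ref{CQ1} is regular at $A(U)$ and its value lies in $E_{n-2}$ and in no smaller $E_r$, yielding a canonical fiber datum $A_P \in \CQ(U)$. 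By Lemma~\ref{n-2_anyB} we have $\Phi(U, B) \in X(C_n) \cap E_{n-2}$, and $\psi \circ \Phi = \mathrm{id}_{\mathcal{O}}$ holds by construction. For the reverse composition, take $P = (A_P, U_P, B_P) \in \psi^{-1}(\mathcal{O})$; writing $P$ as a limit of regular matrices $A^k \in L(C_n)$, the limit $\pi_1(P) \in L(C_n)$ has image contained in $U_P$, and Lemma~\ref{lemma:lowrank} combined with the genericity of $U_P$ (any cyclic-block matrix contributes a vector with at most two non-zero standard coordinates to its image) prevents the rank from dropping below $n-2$. Hence $\pi_1(P) = A(U_P)$ up to scalar by the uniqueness part of Lemma~\ref{n-2_uniqueA}, which pins down $A_P$ and gives $\Phi(\psi(P)) = P$.

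The main technical step I expect to require care is reconciling the two descriptions of the fiber datum $A_P \in \CQ(U_P)$: the intrinsic one coming from the blow-up construction of $\CQ(V)$, and the one coming from the rank-$(n-2)$ matrix $A(U_P) \in L(C_n) \subset S^2(V)$. The openness condition on $\mathcal{O}_G$ is designed precisely for this: because $A(U)$ has the maximal possible rank $n-2$, $\phi$ is regular at $A(U)$, its value lies in $E_{n-2}$ and in no smaller $E_r$, and the two descriptions of $A_P$ coincide unambiguously. Once this identification is in place, the algebraic dependence of $A(U)$ on $U$ promotes the set-theoretic bijection to the desired isomorphism of $\psi^{-1}(\mathcal{O})$ onto the open subvariety $\mathcal{O} \subset S^2(\mathcal{Q})$.
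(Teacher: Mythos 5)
Your proposal is correct and follows essentially the same route as the paper: both build the inverse rational map on the locus of subspaces $U$ containing no non-zero vector with only two non-zero coordinates via Lemmas~\ref{n-2_uniqueA} and~\ref{n-2_anyB}, and both rule out non-regular fiber data $A_P$ over such $U$ by combining Lemma~\ref{lemma:lowrank} with the cyclic-block structure. The only imprecision is the claim that $\phi$ of Definition~\ref{CQ1} is regular at the rank-$(n-2)$ matrix $A(U)$ --- it is not, since a point of $E_{n-2}$ requires the additional datum $B$ --- but your argument only needs that $A(U)$, having image exactly $U$, determines a nondegenerate element of $S^2(U)$ and hence a canonical point of $\CQ(U)$, which is true.
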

\begin{proof}
 We can define a rational map $\overline{\psi}: S^2(\mathcal Q)\dashrightarrow X(C_n)\cap E_{n-2}$, which sends $(U,B)$ to the unique point $(A,U,B)\in X(C_n)\cap E_{n-2}$ by Lemmas~\ref{n-2_uniqueA} and \ref{n-2_anyB}. The uniqueness of $A$ is guaranteed only on the open subset of $S^2(\mathcal Q)$ where the space $U$ does not contain a vector with only two non-zero coordinates, so it is only a rational map.
 
 Note that for $U$ which does not contain a vector with two non-zero coordinates, there does not exist any point $(A',U,B)\in X(C_n)\cap E_{n-2}$ where $A'$ is not a regular matrix. Indeed, if it exists, then $A'_1$, interpreted as the element of $S^2(V)$, has rank at most $n-3$. By Lemma \ref{lemma:lowrank}, it is in the cyclic-block form and therefore its image contains a vector with just two non-zero coordinates which is a contradiction.

Thus, the maps $\psi$ and $\overline{\psi}$ are inverses to each other on an open set of $S^2(\mathcal Q)$, which proves the lemma.
\end{proof}

\subsection{Intersection of $X(C_n)$ with more exceptional divisors $E_i$}

For describing such intersections, we will work with the subvarieties of a Grassmannian, more precisely with their classes in the Chow ring of Grassmannian. For a partition $\lambda$ we denote by $s_\lambda$ the corresponding Schur polynomial, \cite{macdonald}.  Recall that the Chow ring of $G(n,r)$ is generated by $s_\lambda$ where $\lambda$ fits inside an $r\times (n-r)$ rectangle.


\begin{Lemma}\label{lemma:XE1..ErEn-2}
Let $r\le n-3$. Then $X(C_n)\cap E_1\cap E_2\cap \dots \cap E_r\cap E_{n-2}$ has $n(n-1)\dots (n-r+1)$ components, each of them corresponding to a labeled graph $\tilde{\Gamma}\in \tilde{\Sigma}_n$ with $r$ edges. Moreover, for every component $Y_{\tilde{\Gamma}}$ there exists a closed subvariety $G_{\tilde{\Gamma}}$ of Grassmannian $G(n-2,V)$ and the quotient bundle $\mathcal Q_{\tilde{\Gamma}}$ over $G_{\tilde{\Gamma}}$ such that the map $\psi: Y_{\tilde{\Gamma}} \dashrightarrow S^2(\mathcal Q_{\tilde\Gamma}) $, which sends $P=(A_P,U_P,B_P)$ to $(U_P,B_P)$ in $ S^2(\mathcal Q_{\tilde{\Gamma}})$, is an isomorphism on an open subset of $S^2(\mathcal Q_{\tilde{\Gamma}})$. Moreover, if $\gamma_1,\dots,\gamma_k$ denotes the lengths of paths in $\tilde{\Gamma}$, then the class of $G_{\tilde{\Gamma}}$ in the Chow ring of $G(n-2,V)$ is $s_{1,\dots,1}\dots s_{1,\dots,1}$, where the numbers of ones in each class are $\gamma_1,\dots,\gamma_k$.
\end{Lemma}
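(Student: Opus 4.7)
The plan is to combine Corollary~\ref{cor:E1..Er} with Lemma~\ref{lemma:En-2} applied to the smaller cycle $C_{n-r}$ sitting inside $\CQ(V/U_P)$, and then to reorganize the resulting parametrization over an appropriate subvariety of the Grassmannian $G(n-2, V)$.

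First, I would fix a labeled graph $\tilde\Gamma \in \tilde\Sigma_n$ with $r$ edges. By Corollary~\ref{cor:E1..Er}, a general point of the corresponding component of $X(C_n)\cap E_1 \cap \dots \cap E_r$ is $(A_P, U_P, B_P)$ with $U_P = \langle p_1,\dots,p_r\rangle$, $p_i \in \ell_{f_{\tilde\Gamma}(i)}$, and $B_P \in X(C_{n-r}) \subset \CQ(V/U_P)$. The key translation step is that the condition $P \in E_{n-2}$ descends to the smaller cycle: using the correspondence $P_{r+i} \leftrightarrow (B_P)_i$ from Subsection~\ref{points-E_r}, $P \in E_{n-2}$ if and only if $(B_P)_{n-r-2}$ has rank one, i.e.\ $B_P$ lies in the $(n-r-2)$-th exceptional divisor $E'_{n-r-2}$ of $\CQ(V/U_P)$.

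Next I would invoke Lemma~\ref{lemma:En-2} for the smaller cycle $C_{n-r}$: on an open subset, $B_P$ is equivalent data to a pair $(W, B_B)$ with $W \in G(n-r-2, V/U_P)$ and $B_B \in S^2((V/U_P)/W)$. Letting $\tilde U \subset V$ be the preimage of $W$ under the quotient $V \to V/U_P$, we get $\tilde U \in G(n-2, V)$ with $U_P \subset \tilde U$ and $V/\tilde U \cong (V/U_P)/W$. Hence a general point of $Y_{\tilde\Gamma}$ is specified by $(p_1,\dots,p_r, \tilde U, B_B)$ with $\langle p_1,\dots,p_r\rangle \subset \tilde U$. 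Moreover $\tilde U$ alone recovers the $p_i$ generically: for each path of length $\gamma_j$ in $\tilde\Gamma$, with path subspace $V_{\mathrm{path},j}$, the intersection $\tilde U \cap V_{\mathrm{path},j}$ has dimension $\gamma_j$ and meets each of the $\gamma_j$ edge-lines of that path in a single point, uniquely recovering the corresponding $p_i$. This produces the required isomorphism $\psi\colon Y_{\tilde\Gamma} \dashrightarrow S^2(\mathcal{Q}_{\tilde\Gamma})$ on an open subset, where $\mathcal{Q}_{\tilde\Gamma}$ is the quotient bundle over the image $G_{\tilde\Gamma} \subset G(n-2,V)$.

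It remains to identify the class of $G_{\tilde\Gamma}$ in $G(n-2,V)$. By construction, $G_{\tilde\Gamma}$ is the locus of $\tilde U$ satisfying $\dim(\tilde U \cap V_{\mathrm{path},j}) \ge \gamma_j$ for every path $j$. A direct Schubert calculus computation (applied to a flag containing $V_{\mathrm{path},j}$) shows that each such single condition has class $s_{1,\dots,1}$ with $\gamma_j$ ones. Since the paths of $\tilde\Gamma$ are pairwise vertex-disjoint, the subspaces $V_{\mathrm{path},j}$ are supported on disjoint standard basis vectors, so the Schubert loci meet transversally and their classes multiply, yielding the stated product. The main obstacle is the bookkeeping in the previous paragraph: making precise that the fiberwise description of Lemma~\ref{lemma:En-2} glues coherently as $U_P$ varies in its moduli, so that a single relative bundle $S^2(\mathcal{Q}_{\tilde\Gamma})$ over $G_{\tilde\Gamma}$ actually emerges. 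The $E_{n-2}$-translation and the Schubert class computation itself are routine once this global bundle structure is in place.
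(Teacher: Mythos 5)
Your proposal is correct and follows essentially the same route as the paper: it combines Corollary~\ref{cor:E1..Er} with Lemma~\ref{lemma:En-2} applied to $C_{n-r}$ inside $\CQ(V/(p_1,\dots,p_r))$, recovers the points $p_i$ generically as $U\cap\ell_{f_{\tilde\Gamma}(i)}$, and identifies $G_{\tilde\Gamma}$ with the product of Schubert conditions $s_{1,\dots,1}$ attached to the vertex-disjoint paths. The "gluing" issue you flag at the end is not a real obstacle — the paper simply defines the inverse $\overline{\psi}$ pointwise as a rational map into $Y_{\tilde\Gamma}$, which suffices for an isomorphism over an open subset.
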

\begin{proof}
By Corollary~\ref{cor:E1..Er}, $X(C_n)\cap E_1\cap E_2\cap \dots \cap E_r$ has $n(n-1)\dots (n-r+1)$ components given by the graphs $\tilde{\Gamma}\in\tilde{\Sigma}_n$ with $r$ edges. By intersecting each of these components with $E_{n-2}$ we get $n(n-1)\dots(n-r+1)$ components of $X(C_n)\cap E_1\cap E_2\cap \dots \cap E_r\cap E_{n-2}$. We fix a graph $\tilde{\Gamma}$.
Then consider the set $$G_{\tilde{\Gamma}}=\overline{\{ \Lambda\in G(n-2,V); \forall i\le r: \Lambda\cap \ell_{f_{\tilde{\Gamma}}(i)}\neq\emptyset,\forall j\le n: e_j\notin \Lambda\}}.$$ We will prove that it satisfies the conditions from the statement of our lemma. We still have the map  $\psi:Y_{\tilde{\Gamma}} \rightarrow S^2(\mathcal Q)$, since it is the restriction of a map defined on $E_{n-2}$. Consider the projection $\xi:S^2(\mathcal Q)\rightarrow G(n-2,V)$. We claim that the image of $\xi\circ\psi$ map lies in $G_{\tilde{\Gamma}}$. 

Consider any $y\in Y_{\tilde{\Gamma}}$ which does not lie in other component. By Corollary~\ref{cor:E1..Er}, the point $y$ is defined by points $p_1,\dots,p_r$ such that $p_i\in \ell_{f_{\tilde{\Gamma}}(i)}$. The space $\xi(\psi(y))$ contains points $p_1,\dots,p_r$, thus $\xi(\psi(y))\in G_{\tilde{\Gamma}}$. Consequently, the image of $\psi$ lies in $S^2(\mathcal Q_{\tilde{\Gamma}})$, hence we have a map 
$\psi:Y_{\tilde{\Gamma}} \rightarrow S^2(\mathcal  Q_{\tilde{\Gamma}})$. Moreover, we can define the map $$\overline{\psi}: S^2(\mathcal Q_{\tilde{\Gamma}})\dashrightarrow Y_{\tilde{\Gamma}}$$
as follows: consider a point $(U,B)\in S^2(\mathcal Q_{\tilde{\Gamma}}) $, where $U\in G_{\tilde{\Gamma}}$ and $B\in \PP(S^2(V/U))$. We define the points $p_i:=U\cap \ell_{f_{\tilde{\Gamma}}(i)}$.

Let $\mathcal Q'$ be the quotient bundle over Grassmannian $G(n-r-2,V/(p_1,\dots,p_r))$. We have a natural embedding $i:G(n-r-2,V/(p_1,\dots,p_r))\rightarrow G_{\tilde{\Gamma}}$ which sends the space $\Lambda$ to its preimage in the projection $V\rightarrow V/(p_1,\dots,p_r)$. Consequently, we have also an embedding $\tilde{i}:S^2(\mathcal Q')\rightarrow S^2(\mathcal Q_{\tilde{\Gamma}})$. Now we apply Lemma~\ref{lemma:En-2} to the space $\CQ(V/(p_1,\dots,p_r))$. The map $\psi':X(C_{n-r})\cap E_{n-r-2}\rightarrow S^2(\mathcal Q') $ is an isomorphism on an open subset and it has its inverse $\overline{\psi'}:S^2(\mathcal Q') \dashrightarrow X(C_{n-r})\cap E_{n-r-2}$.

We define $A\in X(C_{n-r})\subset\CQ(V/(p_1,\dots,p_r))$ as $A=\overline{\psi'}(i^{-1}(U,B))$. Now we define $\overline{\psi}(U,B)$ as the point in $Y_{\tilde{\Gamma}}$ determined by $p_1,\dots,p_r$ and $A$ as in Corollary~\ref{cor:E1..Er}. 

The map $\overline{\psi}$ is only a rational map, since it is well-defined only if the intersection of $U$ with all $\ell_{f_{
\tilde{\Gamma}}(i)}$ is just a point different from $e_j$ and if $i^{-1}(U,B)$ belongs to the subset of $S^2(\mathcal Q')$ where the rational map $\overline{\psi'}$ is defined.

By construction, the maps $\psi$ and $\overline{\psi}$ are inverses of each other on an open subset of $S^2(\mathcal Q_{\tilde{\Gamma}})$.
To conclude the lemma, we need to determine the class of $G_{\tilde{\Gamma}}$ in the Chow ring of Grassmannian. A linear space $\Lambda\subset\PP(V)$ intersects all of the lines $\ell_1,\dots \ell_k$ in points different from $e_j$ if and only if it intersects the $k$-plane spanned by  $\ell_1,\dots \ell_k$ in a $(k-1)$-dimensional space and does not contain points $e_j$. The closure of the set of all such spaces $\Lambda$ is the set of all spaces $\Lambda$ which intersect the $k$-plane spanned by  $\ell_1,\dots \ell_k$ in a $(k-1)$-dimensional space. This is, indeed, true for any $k$ consecutive lines. This is exactly the condition for the class $s_{1,\dots,1}$ with $k$ ones.  From this it follows that the class of $G_{\tilde{\Gamma}}$ is exactly the class from the statement of our lemma.
 \end{proof}
 
We finish this section with two technical lemmas which will allow us to compute the intersection products in  Section~\ref{section:proof}. 

\begin{Lemma}\label{lemma:argument}
Let $\phi_r:E_r\rightarrow G(r,V)$ be the natural map which sends the point $p\in E_r$ to the image of $\pi_r(p)\in \PP(S^2(\bigwedge^r V))$. Let $\Lambda$ be the $(n-r-1)$-space in $\PP(V)$. Let $\Omega(\Lambda)=\{\Theta\in G(r,V): \Theta\cap\Lambda\neq\emptyset\}\subset G(r,V)$. Then $2[\phi_r^{-1}(\Omega(\Lambda))]=S_rL_r$.

\end{Lemma}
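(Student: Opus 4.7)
The plan is to compute $L_r$ restricted to $E_r$ via the factorization of $\pi_r|_{E_r}$ through the Grassmannian, and then conclude by the projection formula.

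The first step is to describe $\pi_r|_{E_r}$ geometrically. Since for every $p \in E_r$ the matrix $\pi_r(p) \in \PP(S^2(\bigwedge^r V))$ has rank one, its image in $\bigwedge^r V$ is a one-dimensional subspace, and as explained in Subsection~\ref{points-E_r} this space is decomposable and equals $\bigwedge^r U_p$. Consequently $\pi_r|_{E_r}$ factors as
\[
E_r \xrightarrow{\;\phi_r\;} G(r,V) \xrightarrow{\;\tau\;} \PP(\bigwedge^r V) \xrightarrow{\;\nu_2\;} \PP(S^2(\bigwedge^r V)),
\]
where $\tau$ is the Plücker embedding and $\nu_2$ is the second Veronese.

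The second step is a pullback computation on line bundles. We have $\nu_2^*\mathcal{O}(1)=\mathcal{O}(2)$, and $\tau^*\mathcal{O}(1)=\sigma_1$, the Schubert divisor on $G(r,V)$. Moreover, $\sigma_1$ is represented by the Schubert cycle $\Omega(\Lambda)$ of $r$-planes meeting a fixed $(n-r)$-dimensional linear subspace, i.e.~an $(n-r-1)$-plane in $\PP(V)$. Therefore
\[
L_r|_{E_r} \;=\; (\pi_r|_{E_r})^*\mathcal{O}(1) \;=\; \phi_r^*\bigl(\nu_2\circ\tau\bigr)^*\mathcal{O}(1) \;=\; 2\,\phi_r^*\sigma_1 \;=\; 2\,[\phi_r^{-1}(\Omega(\Lambda))].
\]

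The third step finishes the argument by the projection formula applied to the inclusion $i_r\colon E_r\hookrightarrow \CQ(V)$. Since $S_r=(i_r)_*[E_r]$ as a class on $\CQ(V)$, we obtain
\[
S_r\,L_r \;=\; (i_r)_*\bigl(L_r|_{E_r}\bigr) \;=\; (i_r)_*\bigl(2\,[\phi_r^{-1}(\Omega(\Lambda))]\bigr) \;=\; 2\,[\phi_r^{-1}(\Omega(\Lambda))].
\]
The codimensions match: $\phi_r^{-1}(\Omega(\Lambda))$ has codimension $2$ in $\CQ(V)$ (codimension $1$ in the divisor $E_r$), as does the product $S_r L_r$.

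The only subtlety, and the one place that needs attention rather than routine, is justifying the factorization of $\pi_r|_{E_r}$ as a morphism of varieties (not only on the open stratum where $\pi_r$ hits honest rank-one matrices). This is guaranteed by the triple description recalled in Subsection~\ref{points-E_r}: on all of $E_r$ the point $\pi_r(p)$ equals the second Veronese of $\bigwedge^r U_p$, so the map to $G(r,V)$ defined by $p\mapsto U_p$ is a morphism and the factorization holds scheme-theoretically.
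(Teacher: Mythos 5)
Your proof is correct and follows essentially the same route as the paper: both factor $\pi_r|_{E_r}$ through the Pl\"ucker embedding composed with the second Veronese, identify the hyperplane class on $G(r,V)$ with the Schubert cycle $\Omega(\Lambda)$, and pick up the factor of $2$ from the Veronese. The only cosmetic difference is that you phrase the final step via line-bundle pullbacks and the projection formula, whereas the paper intersects with general hyperplanes and pulls back cycle classes through the same commutative diagram.
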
 
\begin{proof}
We have the following commutative diagram:
\[
\begin{tikzcd}                      
                     E_r\arrow[r,"\phi_r"]\arrow[rrd,"\tilde{\pi}_r"]&G(r,V)\arrow[r,hook,"i"]& \PP(\bigwedge^r(V))\arrow[d,"\nu_2"] \\
                     &&\PP(S^2(\bigwedge^r(V)))
\end{tikzcd}
\]
where $G(r,V)$ is embedded in $\PP(\bigwedge^r(V))$ by Pl\"ucker coordinates, $\nu_2$ is the second Veronese map and $\tilde{\pi}_r$ is the restriction of $\pi_r$ to $E_r$.
Let $H$ be a general hyperplane in  $\PP(S^2(\bigwedge^r(V)))$. From the diagram we have the following equality in the Chow ring of $E_r$: $$[\tilde{\pi}_r^{-1}(H)]=[\phi_r^{-1}(i^{-1}(\nu_2^{-1}(H)))].$$

Let $H'$ be a general hyperplane in $\PP(\bigwedge^r(V))$. Since $\nu_2$ is the second Veronese map, we get $[\nu_2^{-1}(H)]=2[H']$. Moreover we know that $[i^{-1}(H')]=[\Omega(\Lambda)]$. Thus, $$[\tilde{\pi}_r^{-1}(H)]=2[\phi_r^{-1}(\Omega(\Lambda))].$$

This equality also holds in the Chow ring of $\CQ(V)$. We have $\tilde{\pi}_r^{-1}(H)=\pi_r^{-1}(H)\cap E_r$, thus $$L_rS_r=[\tilde{\pi}_r^{-1}(H)]=2[\phi_r^{-1}(\Omega(\Lambda))].$$

\end{proof}

\begin{Lemma}\label{lemma:puzzle}
Consider the component $Y_{\tilde{\Gamma}}$ of $X(C_n)\cap E_1\cap\dots\cap E_r$ given by the graph $\tilde{\Gamma}\in \tilde{\Sigma}_n$ with $r$ edges. Let $\mathcal E_{\tilde{\Gamma}}$ be the set of edges of $\tilde{\Gamma}$ such that the right end of the edge has degree one in ${\tilde{\Gamma}}$. (Here, the right end of edge $(1,n)$ is the vertex $1$ and otherwise it is the vertex with higher number.)

For each $e\in \mathcal E_{\tilde{\Gamma}}$, let $q_e$ be a general point on the line $\ell_e$. Let $\Lambda_{\tilde{\Gamma}}=\spann(\{q_e \:\; e\in\mathcal E_{\tilde{\Gamma}}\}\cup\{e_i \:\; i \text{ is isolated vertex in }\tilde{\Gamma}\})$. Then the intersection $Y_{\tilde{\Gamma}}\cap \phi_r^{-1}(\Omega(\Lambda_{\tilde{\Gamma}}))$ is transverse and $Y_{\tilde{\Gamma}}\cap \phi_r^{-1}(\Omega(\Lambda_{\tilde{\Gamma}}))$ has $|\mathcal E_{\tilde{\Gamma}}|$ components. Each of them is birational to the component of $X(C_{n-1})\cap E_1\cap\dots\cap E_{r-1}$ given by the graph $\tilde{\Gamma}/e$ for some $e\in \mathcal E_{\tilde{\Gamma}}$. In the graph $\tilde{\Gamma}/e$ we decrease the labels of every edge with higher label than $e$ by one, so now the graph $\tilde{\Gamma}/e$ has the edges labeled by $1,\dots,r-1$. Note that the graph $\tilde{\Gamma}/e$ is uniquely determined as the subgraph of $C_{n-1}$ up to a possible rotation.
\end{Lemma}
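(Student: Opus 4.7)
The plan is to parametrize $Y_{\tilde{\Gamma}}$ generically via Corollary~\ref{cor:E1..Er}, writing a generic point as a tuple $(p_1,\ldots,p_r;A)$ with $p_i\in\ell_{f_{\tilde{\Gamma}}(i)}$ and $A\in X(C_{n-r})\subset\CQ(V/(p_1,\ldots,p_r))$. Under this parametrization $\phi_r$ sends such a point to $\spann(p_1,\ldots,p_r)\in G(r,V)$, so the condition $\phi_r^{-1}(\Omega(\Lambda_{\tilde{\Gamma}}))$ translates to the single Schubert condition that $\spann(p_1,\ldots,p_r)\cap\Lambda_{\tilde{\Gamma}}\neq 0$. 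The proof thus reduces to analyzing this determinantal condition on the parameter space of $Y_{\tilde{\Gamma}}$.

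Grouping the edges of $\tilde{\Gamma}$ by path, let path $j$ of length $\gamma_j$ have vertices $v^{(j)}_0,\ldots,v^{(j)}_{\gamma_j}$, ordered so that the unique edge lying in $\mathcal E_{\tilde{\Gamma}}$ is $(v^{(j)}_{\gamma_j-1},v^{(j)}_{\gamma_j})$. Write $p^{(j)}_t=\alpha^{(j)}_t e_{v^{(j)}_{t-1}}+\beta^{(j)}_t e_{v^{(j)}_t}$ and $q^{(j)}=\alpha^{(j)} e_{v^{(j)}_{\gamma_j-1}}+\beta^{(j)} e_{v^{(j)}_{\gamma_j}}$ for the chosen generic point on that edge. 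Assemble the $n\times n$ matrix $M$ whose columns are $p_1,\ldots,p_r$ followed by the $q^{(j)}$ and the $e_i$ at isolated vertices; then $\spann(p_1,\ldots,p_r)\cap\Lambda_{\tilde{\Gamma}}\neq 0$ iff $\det M=0$. Expanding along the rows of isolated vertices (each contributing a trivial cofactor of $1$) leaves a block-diagonal matrix with one $(\gamma_j+1)\times(\gamma_j+1)$ block per path, and iteratively expanding each block from the free endpoint of the path yields
\[
\det M=\pm\prod_{j}\Bigl(\prod_{t=1}^{\gamma_j-1}\alpha^{(j)}_t\Bigr)\bigl(\alpha^{(j)}_{\gamma_j}\beta^{(j)}-\alpha^{(j)}\beta^{(j)}_{\gamma_j}\bigr).
\]
On the dense open set where no $\alpha^{(j)}_t$ vanishes, $\det M=0$ forces exactly one of the $2\times 2$ factors $\delta_j:=\alpha^{(j)}_{\gamma_j}\beta^{(j)}-\alpha^{(j)}\beta^{(j)}_{\gamma_j}$ to vanish, which amounts projectively to $p^{(j)}_{\gamma_j}=q^{(j)}$. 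Since the $\delta_j$ are in bijection with the paths of $\tilde{\Gamma}$, and hence with $\mathcal E_{\tilde{\Gamma}}$, this produces exactly $|\mathcal E_{\tilde{\Gamma}}|$ components, one for each $e\in\mathcal E_{\tilde{\Gamma}}$.

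For the birational identification, fix $e\in\mathcal E_{\tilde{\Gamma}}$ and restrict to the component where $p_e=q_e$. Projecting along $p_e$ passes to $V/(p_e)$; the two endpoints of $e$ are identified in $\PP(V/(p_e))$, so the basis $\{e_i\}$ descends to a basis indexed by the vertices of the cycle $C_{n-1}$ obtained by contracting $e$. The lines $\ell_f$ for $f\neq e$ project bijectively onto the edge lines of this $C_{n-1}$, the points $p_i$ with $i\neq e$ descend to points on them, and $A\in X(C_{n-r})$ sits unchanged in $\CQ\bigl(V/(p_1,\ldots,p_r)\bigr)$, which coincides with the corresponding iterated quotient of $V/(p_e)$. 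Applying Corollary~\ref{cor:E1..Er} inside $\CQ(V/(p_e))$ to the labeled graph $\tilde{\Gamma}/e$ (labels above that of $e$ shifted down by one), this data assembles to a generic point of the component of $X(C_{n-1})\cap E_1\cap\ldots\cap E_{r-1}$ indexed by $\tilde{\Gamma}/e$; the construction admits a rational inverse by lifting each image point to the unique preimage on the corresponding line in $\PP(V)$, giving the asserted birational equivalence.

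Transversality is immediate from the factorization: each $\delta_j$ is a non-zero linear form in the homogeneous coordinates of $p^{(j)}_{\gamma_j}$ on $\ell_{e^{(j)}_{\gamma_j}}$, so its zero locus is a smooth divisor in $Y_{\tilde{\Gamma}}$, and on the open set where all other $\alpha^{(j')}_t$ and $\delta_{j'}$ (for $j'\neq j$) are non-zero the function $\det M$ restricts to a non-vanishing multiple of $\delta_j$. Hence $\phi_r^{-1}(\Omega(\Lambda_{\tilde{\Gamma}}))$ cuts out each component of $Y_{\tilde{\Gamma}}\cap\phi_r^{-1}(\Omega(\Lambda_{\tilde{\Gamma}}))$ reduced, i.e.\ transversely. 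The main obstacle I anticipate is not algebraic-geometric but combinatorial bookkeeping: verifying carefully that after contracting $e$ and relabeling, the remaining edge lines match the description of the component of $X(C_{n-1})\cap E_1\cap\ldots\cap E_{r-1}$ indexed by $\tilde{\Gamma}/e$ as a subgraph of $C_{n-1}$, which the statement of the lemma allows only up to rotation.
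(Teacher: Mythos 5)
Your proof is correct and follows the same overall strategy as the paper's: parametrize a general point of $Y_{\tilde{\Gamma}}$ by $(p_1,\dots,p_r,A)$ via Corollary~\ref{cor:E1..Er}, translate $\Omega(\Lambda_{\tilde{\Gamma}})$ into the vanishing of the determinant of the matrix built from the $p_i$, the $q_e$ and the $e_i$ at isolated vertices, block-decompose along the connected components of $\tilde{\Gamma}$ to conclude that the locus is the union over $e\in\mathcal E_{\tilde{\Gamma}}$ of $\{p_i=q_e\}$, and then identify each piece with the component for $\tilde{\Gamma}/e$ by passing to $V/(q_e)$. The one place where you genuinely diverge is the transversality step: the paper projects to $G(r,V)$ and checks by a tangent-space computation that the tangent space to $\phi_r(Y_{\tilde{\Gamma}})$ is not contained in that of $\Omega(\Lambda_{\tilde{\Gamma}})$ (exhibiting the vector $p_1\otimes e_1$ outside the latter), whereas you read off multiplicity one directly from the explicit factorization $\det M=\pm\prod_j\bigl(\prod_t\alpha^{(j)}_t\bigr)\delta_j$, noting that near a general point of the $j$-th component the local equation is a unit times the linear form $\delta_j$, which vanishes to order one. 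Your version is more computational but arguably more self-contained, since it produces the local equation of the divisor on $Y_{\tilde{\Gamma}}$ rather than arguing via tangent spaces; the paper's version avoids having to write the determinant explicitly. Two minor points worth tightening: the phrase ``smooth divisor'' should really be ``generically reduced divisor'' (smoothness of $X(C_{n-r})$ is neither needed nor known), and, as in the paper, one implicitly uses that no component of the intersection is contained in the boundary of the open parametrized locus --- harmless here because $q_e$ is general, but worth a sentence.
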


\begin{proof}
By Corollary~\ref{cor:E1..Er}, we know that a general point in $P\in Y_{\tilde{\Gamma}}$ is given by points $p_1,\dots,p_r$ such that $p_i\in\ell_{f_{\tilde{\Gamma}}(i)}$ and $A\in CQ(X_{n-r})$. Clearly, $\phi_r(P)=\spann\{p_1,\dots,p_r\}$. 

Consider the matrix with rows $p_1,\dots,p_r,q_e,e_i$ for $e\in \mathcal E_{\tilde{\Gamma}}$, and all $i$ such that $i$ is an isolated vertex in $\tilde{\Gamma}$. The spaces $\spann\{p_1,\dots,p_r\}$ and $\Lambda_{\tilde{\Gamma}}$ intersect if and only if this matrix is singular. We can change the order of rows and split the matrix into blocks, such that every block corresponds to a connected component of $\tilde{\Gamma}$. From this follows that this matrix is singular if and only if we have that $q_e\in \spann\{p_1,\dots,p_r\}$ for some edge $e$. This happens if and only if $p_i=q_e$ for some $1\le i\le r$ and $e\in \mathcal E_{\tilde{\Gamma}}$.
 
 Thus, we can see that $Y_{\tilde{\Gamma}}\cap \phi_r^{-1}(\Omega(\Lambda))$ has $|\mathcal E_{\tilde{\Gamma}}|$ components depending on $e$. Let us fix one of these components by fixing the edge $e$. We claim that it is birational to the component of $X(C_{n-1})\cap E_1\dots\cap E_{r-1}$ given by $\tilde{\Gamma}/e$. Indeed, we will realise $L(C_{n-1})\subset \PP(V/(q_e))$. Then the birational map is trivial, it sends a point corresponding to $p_1,\dots,p_r,A$ to the point corresponding to $\widehat{p_1},\dots,\widehat{p_r},A$ when the point $p_i$, such that $p_i=q_e$ is ignored.
 
 It remains to prove that the intersection $Y_{\tilde{\Gamma}}\cap \phi_r^{-1}(\Omega(\Lambda_{\tilde{\Gamma}}))$ is transverse. It is sufficient to show the transversality after projecting with $\phi_r$ to the Grassmannian $G(r,V)$. Note that $\phi_r^{-1}(\Omega(\Lambda_{\tilde{\Gamma}}))$ is a divisor in $S_r$, thus the intersection is not transverse only if, at some point of the intersection, the tangent space to $Y_{\tilde{\Gamma}}$ is a subspace of the tangent space to $\phi_r^{-1}(\Omega(\Lambda_{\tilde{\Gamma}}))$. However, if we show that this does not happen after projecting to $G(r,V)$ it can not happen before projecting.
 
Notice that $$\phi_r(Y_{\tilde{\Gamma}})=\overline{\{\Theta\in G(r,V); \forall e\in \mathcal E(\tilde{\Gamma}): \Theta\cap \ell_e\neq\emptyset, \forall j\le n: e_j\notin \Theta\}}.$$

Consider a point $p\in \phi_r(Y_{\tilde{\Gamma}})\cap \Omega(\Lambda_{\tilde{\Gamma}})$. We know that $p$ is spanned by the points $p_1,\dots,p_r$ such that $p_i\in \ell_{f_{\tilde{\Gamma}}(i)}$ and $p_i=q_e$ for some $1\le i\le r$, $e\in \mathcal E_{\tilde{\Gamma}}$. Without loss of generality we may assume that $i=f(i)=1$ and $e=(1,2)$. We will think about the points $p_i$ as a vector in $V$. Let $v_1,\dots,v_{n-r}$ be the vectors such that $p_1,\dots,p_r,v_1,\dots,v_{n-r}$ span $V$. The tangent space to Grassmannian $G(r,V)$ at point $p$ is generated by vectors $p_i\otimes v_j $ for $1\le i\le r, 1\le j\le n-r$.

The tangent space to $\Omega(\Lambda_{\tilde{\Gamma}})$ is spanned by $p_i\otimes v_j $ for $2\le i\le r, 1\le j\le n-r$ and $p_1\otimes q_e$, $p_1\otimes e_i$ such that $(1,2)\neq e\in\mathcal E_{\tilde{\Gamma}}$ and $i$ is isolated vertex in $\tilde{\Gamma}$. This is true since we want to move in the direction such that our space will still intersect $\Lambda_{\tilde{\Gamma}}$. Thus, we can either not change vector $p_1$ or change it in the direction of $\Lambda_{\tilde{\Gamma}}$.

The tangent space to $\phi_r(Y_{\tilde{\Gamma}})$ at  point $p$ is generated by the vectors $p_i\otimes e_j$ where $j$ is the left end of the edge $f_{\tilde{\Gamma}}(i)$. This is clear since any point in $\phi_r(Y_{\tilde{\Gamma}})$ is defined by $r$ points on lines $\ell_{f_{\tilde{\Gamma}}}(i)$.

The vector $p_1\otimes e_1$ does not lie in the tangent space of $\Omega(\Lambda_{\tilde{\Gamma}})$, which means that the tangent vectors generate the whole tangent space in $p$ to the Grassmannian $G(r,V)$, and the intersection is transverse.
\end{proof}

\section{Proof of Theorem~\ref{variety}}\label{section:proof}
In this section we will prove formulas for various intersections in the space of complete quadrics. Ultimately we also compute the intersection which proves the Theorem~\ref{variety}.

\begin{Lemma}\label{lemma:Pataki} We have
$S_{n-1}L_{n-1}^{n}=0.$
\end{Lemma}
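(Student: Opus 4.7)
The plan is to use the projection formula for the closed embedding $\iota: E_{n-1} \hookrightarrow \CQ(V)$, combined with a dimension count for the image of $E_{n-1}$ under the projection $\pi_{n-1}$.

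First I would recall from the paper that $S_{n-1} = [E_{n-1}] = \iota_*(1)$, together with the tautological description
$$E_{n-1} = \{(A_1,\dots,A_{n-1}) \in \CQ(V) : \rank A_{n-1} = 1\}.$$
This immediately implies that $\pi_{n-1}(E_{n-1})$ is contained in the locus of rank-one symmetric forms in $\PP(S^2(\bigwedge^{n-1}V))$, i.e.\ in the image of the second Veronese embedding $\nu_2(\PP(\bigwedge^{n-1}V))$. In particular, the image has dimension at most $n-1$.

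Next, letting $H$ denote the hyperplane class on $\PP(S^2(\bigwedge^{n-1}V))$, we have $L_{n-1} = \pi_{n-1}^* H$ by definition, so
$$\iota^*(L_{n-1}^n) = (\pi_{n-1}|_{E_{n-1}})^*(H^n).$$
Since $\pi_{n-1}(E_{n-1})$ has dimension $\leq n-1$, the codimension-$n$ class $H^n$ vanishes when restricted to it; hence $\iota^*(L_{n-1}^n) = 0$ in the Chow ring of $E_{n-1}$. Finally, the projection formula gives
$$S_{n-1} \cdot L_{n-1}^n = \iota_*(1) \cdot L_{n-1}^n = \iota_*\bigl(\iota^*(L_{n-1}^n)\bigr) = \iota_*(0) = 0.$$

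There is no real obstacle: the argument is essentially a dimension count using the fact that $E_{n-1}$ factors through the Veronese variety. The only point requiring care is the identification of $\pi_{n-1}(E_{n-1})$ with a subset of $\nu_2(\PP(\bigwedge^{n-1}V))$, which is immediate from the description of $E_{n-1}$ recalled above.
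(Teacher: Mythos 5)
Your proof is correct and follows essentially the same route as the paper: both arguments reduce to the observation that $\pi_{n-1}(E_{n-1})$ lies in the rank-one (Veronese) locus of $\PP(S^2(\bigwedge^{n-1}V))$, which has dimension $n-1$, so capping with $n$ copies of the pulled-back hyperplane class kills the class of $E_{n-1}$. You merely spell out the projection-formula bookkeeping that the paper's one-line proof leaves implicit.
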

\begin{proof}
Since $E_{n-1}$ projects by $\pi_{n-1}$ to rank one matrices and the dimension of the space of rank one matrices is $n-1$, it follows that $S_{n-1} \cdot L_{n-1}^{n}=0$, thus the statement holds, see also \cite[Proposition 5]{nie}.
\end{proof}

\begin{Lemma}\label{lemma:0}
Let $n-2>r>r'+1$. Then
$[X(C_n)]S_1S_2\dots S_{r'}S_{r}L_{n-1}^{2n-2-r'}=0.$
\end{Lemma}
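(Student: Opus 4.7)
My plan is to apply the projection formula to the morphism $\pi_{n-1}\colon \CQ(V)\to \PP(S^2(\bigwedge^{n-1}V))$. Writing $L_{n-1}=\pi_{n-1}^{*}H$ for the hyperplane class $H$ on the target, the intersection equals
\[
H^{2n-2-r'}\cdot(\pi_{n-1})_{*}\bigl([X(C_n)]\cdot S_1\cdots S_{r'}\cdot S_r\bigr),
\]
so it suffices to prove that the pushforward class vanishes. Since this class is supported on $Z:=X(C_n)\cap E_1\cap\cdots\cap E_{r'}\cap E_r$, the vanishing will follow if the morphism $\pi_{n-1}$ has strictly positive-dimensional fibers on every irreducible component of $Z$.

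By Corollary~\ref{cor:E1..Er}, the components of $X(C_n)\cap E_1\cap\cdots\cap E_{r'}$ are indexed by labeled subgraphs $\tilde\Gamma\in\tilde\Sigma_n$ with $r'$ edges, and a general point is $(p_1,\ldots,p_{r'},A)$ with $p_i\in \ell_{f_{\tilde\Gamma}(i)}$ and $A\in X(C_{n-r'})\subset \CQ(V/\langle p_1,\ldots,p_{r'}\rangle)$. Imposing the additional condition $P\in E_r$ is equivalent to $A\in X(C_{n-r'})\cap E'_{r-r'}$, where $E'_{r-r'}$ is the corresponding exceptional divisor of $\CQ(V/\langle p_i\rangle)$. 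Applying Lemma~\ref{lemma:recursion-XE_r} inside the smaller cycle, such an $A$ is described by a triple $(A',U'_A,B'_A)$, where $U'_A$ is the image of a rank $r-r'$ matrix in $L(C_{n-r'})$, $B'_A\in X(C_{n-r})$, and $A'\in \CQ(U'_A)$ encodes the internal quadric on $U'_A$. A dimension count using the block-form description of Section~\ref{low rank} shows that for a fixed $U'_A$ the rank $r-r'$ matrices in $L(C_{n-r'})$ with that image form an $(r-r'-1)$-dimensional projective family, so $A'$ varies in an $(r-r'-1)$-dimensional locus.

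The crucial observation is that $\pi_{n-1}$ does not see $A'$. From Subsection~\ref{points-E_r}, for $P\in E_{r'}$ with data $(A_P,U_P,B_P)$ one has $P_{n-1}=(B_P)_{n-r'-1}$ up to the standard twist by $U_P$, since $n-r'-1\ge 1$. Writing $B_P=(A',U'_A,B'_A)\in E'_{r-r'}$ inside $\CQ(V/U_P)$ and re-applying the same description in that smaller space, one identifies $(B_P)_{n-r'-1}=(B'_A)_{n-r-1}$ up to the twist by $U'_A$, because the shifted index $n-r'-1=(r-r')+(n-r-1)$ lies in the $B$-regime of the description. Thus $P_{n-1}$ is determined by $(U_P,U'_A,B'_A)$ and is completely independent of $A'$. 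Consequently every fiber of $\pi_{n-1}|_{Z}$ contains a positive-dimensional family of distinct $A'$'s: the fiber dimension is at least $r-r'-1\ge 1$ by the hypothesis $r>r'+1$. Therefore $(\pi_{n-1})_{*}[Z_i]=0$ for every irreducible component $Z_i$ of $Z$, and the vanishing of the intersection product follows.

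The main obstacle will be to make precise the bookkeeping for the nested application of Subsection~\ref{points-E_r}: one must verify that composing the twist by $U'_A$ in $\CQ(V/U_P)$ with the twist by $U_P$ in $\CQ(V)$ produces the claimed identification $P_{n-1}=(B'_A)_{n-r-1}$ in $\PP(S^2(\bigwedge^{n-1}V))$ with no dependence on $A'$. Once this is settled, the dimension count on $Z$ and the projection formula combine to give the result without any further delicate intersection-theoretic input.
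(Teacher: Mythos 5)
Your key observation --- that $\pi_{n-1}$ only sees $(U_P,U'_A,(B'_A)_{n-r-1})$ and is blind to the data $A_P$ and $A'$ --- is exactly the mechanism behind the paper's proof. But the way you deploy it contains two genuine gaps. First, the reduction ``the pushforward vanishes if $\pi_{n-1}$ has positive-dimensional fibers on every irreducible component of $Z$'' is not valid as stated. The class $[X(C_n)]S_1\cdots S_{r'}S_r$ has dimension $2n-r'-2$ and is supported on $Z$, but if some component of $Z$ has excess dimension, the class may be represented by cycles on \emph{proper} subvarieties of that component, over which the fibers of $\pi_{n-1}$ could be finite; positive-dimensional generic fibers over the components then prove nothing. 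What you actually need is $\dim\pi_{n-1}(Z)<2n-2-r'$. Second, your lower bound on the fiber dimension is an \emph{existence} claim: you must show that, with $(p_1,\dots,p_{r'},U'_A,B'_A)$ fixed, a positive-dimensional family of $A'$ genuinely occurs among points of $X(C_n)$, not merely among points of $E_1\cap\cdots\cap E_{r'}\cap E_r$. Knowing that rank-$(r-r')$ matrices in $L(C_{n-r'})$ with prescribed image form a positive-dimensional family (and even that count is delicate: the matrix is determined up to independent scaling of its nonzero blocks, so the dimension is $\min(r-r',\,n-r)-1$, not always $r-r'-1$) does not show that each of them, together with the fixed $B'_A$, assembles into a point lying in the closure $X(C_n)$. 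This is precisely the kind of converse statement the paper goes out of its way to avoid --- see the Remark following Lemma~\ref{lemma:recursion-XE_r}, where even the analogous claim for the $B$-part is left unproved because it is ``even more technical.''

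Both gaps are closed simultaneously by turning your inequality around: instead of bounding fibers from below, bound the image from above. Using only the \emph{containments} you already invoke (Corollary~\ref{cor:E1..Er} for the $p_i$'s, the block-form count for $U'_A$, and Lemma~\ref{lemma:recursion-XE_r} for $B'_A\in X(C_{n-r})$), one gets
\[
\dim\pi_{n-1}(Z)\ \le\ r'+(r-r')+\bigl(2(n-r)-1\bigr)\ =\ 2n-r-1\ <\ 2n-2-r',
\]
the last inequality being exactly the hypothesis $r>r'+1$; then $2n-2-r'$ general hyperplane sections pulled back by $\pi_{n-1}$ miss $Z$ entirely and the product vanishes. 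This is the paper's argument (phrased there via the projection to $\PP(S^2(\bigwedge^r V))\times\cdots\times\PP(S^2(\bigwedge^{n-1}V))$, which forgets the $A$-data just as you observe), and it requires no control of $\dim Z$ and no construction of points in the boundary of $X(C_n)$.
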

\begin{proof}
Assume that $r'=0$. Then 
 consider the projection \[\pi:\CQ(V)\rightarrow \PP\left( S^2\left(\bigwedge^{r} V \right) \right) \times\ldots\times \PP \left(S^2\left(\bigwedge^{n-1} V\right) \right).\]

We show that $\dim(\pi(X(C_n) \cap E_r))\le 2n-r-1$. This will imply that, if we intersect with $2n-2$ general hyperplanes, the intersection is empty. Therefore, the intersection was empty also before the projection, hence $[X(C_n)]\cdot S_r \cdot L_{n-1}^{2n-2}=0$. It remains to compute $\dim(\pi(X(C_n)\cap E_r))$.

The projection $\pi(P)$ of a point $P\in E_{r'}$ is given by $(U_P,B_P)$, where $U_P$ s the image of rank $r$ matrix in $L(C_n)$. By Corollary~\ref{lemma:blockform}, there is a decomposition of $V=\bigoplus V_i^T$ and decomposition $U_P=\bigoplus (U_P)_i^T$, such that $(U_P)_i^T$ is a codimension one subspace of $V_i^T$. If we fix the type $T$, then such spaces $U_P$ form a subvariety of $G(r,V)$ of dimension 
$$
(\dim(V_1)-1)+(\dim(V_2)-1)+\dots+(\dim(V_{n-r})-1)=n-(n-r)=r.
$$
Since there are only finitely many types, the dimension of $\pi(E_r)$ after projecting to $G(r,V)$ is at most $r$. Now we compute the dimension of the fiber of $\pi(S_r)$ over fixed $U_P\in G(r,V)$. By Lemma~\ref{lemma:recursion-XE_r}, this fiber is the subset of the space $X(C_{n-r})\subset ´\CQ(V/U_P)$, thus its dimension is at most $2(n-r)-1$.
Therefore, we obtain that $$\dim(\pi(X(C_n)\cap E_r))\le r+2(n-r)-1=2n-r-1,$$ which implies the desired conclusion.

In the case $r'>0$, we again consider the projection \[\pi:\CQ(V)\rightarrow \PP\left( S^2\left(\bigwedge^{r} V \right) \right) \times\ldots\times \PP \left(S^2\left(\bigwedge^{n-1} V\right) \right),\]

and show again that $\dim(\pi(X(C_n)\cap E_1\cap\dots\cap E_{r'} \cap E_r)\le 2n-r-1<2n-2-r'$, and conclude in the same way.

From Corollary \ref{cor:E1..Er} it follows that a general point of $X(C_n)\cap E_1\cap\dots\cap E_{r'}$ is given by the points $p_1,\dots,p_{r'}$ and $A\in X(C_{n-r'})$. If we fix $p_1,\dots,p_{r'}$ and project we know from the proof for $r'=0$ that the dimension is at most $2(n-r')-(r-r')-1=2n-r-r'-1$. If we fix the component of $X(C_n)\cap E_1\cap\dots\cap E_{r'}$, then each $p_i$ lies on a fixed line, thus by considering also the points $p_1,\dots,p_{r'}$, the dimension of the image of $\pi(X(C_n)\cap E_1\cap\dots\cap E_{r'})$ can increase by at most $r'$.

Thus $\dim(\pi(X(C_n)\cap E_1\cap\dots\cap E_{r'} \cap E_r))\le 2n-r-r'-1+r'=2n-r-1$.

\end{proof}

\begin{Lemma}\label{lemma:killL_i}
Let $r\le n-3$. Then $$[X(C_n)]S_1S_2\dots S_rL_rL_{n-1}^{2n-2-r}=\frac{2nr(n-r)}{n-1}[X(C_{n-1})]S_1S_2\dots S_{r-1}L_{n-2}^{2n-2-r}.$$
\end{Lemma}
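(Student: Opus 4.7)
My plan is to reduce the intersection product on the left-hand side to an analogous product for the smaller cycle $C_{n-1}$, by using the geometric decompositions from the previous section together with a combinatorial counting argument.

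To begin, I would apply Lemma~\ref{lemma:argument} to rewrite $S_r L_r = 2[\phi_r^{-1}(\Omega(\Lambda))]$ for a generic $(n-r-1)$-plane $\Lambda \subset \PP(V)$. Since $\phi_r^{-1}(\Omega(\Lambda))$ is supported on $E_r$ and Corollary~\ref{cor:E1..Er} gives a decomposition $X(C_n) \cap E_1 \cap \dots \cap E_r = \bigsqcup_{\tilde\Gamma} Y_{\tilde\Gamma}$ of the expected dimension, the product $[X(C_n)]\,S_1\cdots S_{r-1}(S_r L_r)$ equals $2\sum_{\tilde\Gamma} [Y_{\tilde\Gamma} \cap \phi_r^{-1}(\Omega(\Lambda))]$, after verifying that the intersections occur with scheme-theoretic multiplicity one. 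For each component $Y_{\tilde\Gamma}$ I would then replace $\Lambda$ by the specific subspace $\Lambda_{\tilde\Gamma}$ provided by Lemma~\ref{lemma:puzzle}, which is permissible because the class $[\phi_r^{-1}(\Omega(\Lambda))]$ is independent of $\Lambda$; the lemma then ensures that the intersection is transverse and decomposes into components $Y_{\tilde\Gamma,e}$ indexed by $e \in \mathcal E_{\tilde\Gamma}$. This yields
\[
[X(C_n)]\,S_1 \cdots S_r L_r L_{n-1}^{2n-2-r} = 2\sum_{\tilde\Gamma}\sum_{e \in \mathcal E_{\tilde\Gamma}} [Y_{\tilde\Gamma, e}] \cdot L_{n-1}^{2n-2-r}.
\]

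Next, I would identify each $[Y_{\tilde\Gamma, e}] \cdot L_{n-1}^{2n-2-r}$ computed in $\CQ(V)$ with the analogous intersection number $[Y_{\tilde\Gamma/e}] \cdot L_{n-2}^{2n-2-r}$ computed in $\CQ(V/q_e)$ for the smaller cycle $C_{n-1}$. The birational map from Lemma~\ref{lemma:puzzle} is induced by the quotient $V \to V/q_e$, and on both sides a general point of the component is encoded by the same inner datum $A \in X(C_{n-r}) \subset \CQ(V/\spann(p_1, \dots, p_r))$; since the last projection ($P_{n-1}$ in $\CQ(V)$, respectively $P_{n-2}$ in $\CQ(V/q_e)$) depends only on the last coordinate of $A$, the two hyperplane classes pull back to the same divisor on the common space. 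Regrouping the double sum by $\tilde\Gamma' = \tilde\Gamma/e$ and letting $P(\tilde\Gamma') = |\{(\tilde\Gamma, e) : \tilde\Gamma/e = \tilde\Gamma'\}|$, the LHS becomes $2\sum_{\tilde\Gamma' \in \tilde\Sigma_{n-1}} P(\tilde\Gamma') \cdot [Y_{\tilde\Gamma'}] L_{n-2}^{2n-2-r}$.

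The main obstacle is the combinatorial identity needed to match this with the right-hand side. Since $[Y_{\tilde\Gamma'}] L_{n-2}^{2n-2-r}$ is $D_{n-1}$-invariant, it suffices to show that for every $\mathbb Z_{n-1}$-orbit $O \subset \tilde\Sigma_{n-1}$ one has $\sum_{\tilde\Gamma' \in O} P(\tilde\Gamma') = \tfrac{nr(n-r)}{n-1}|O|$. A preimage of $\tilde\Gamma'$ is specified by a split position $j \in \{1, \dots, n\}$ in $C_n$ (which determines both the new edge $e = (j, j+1)$ and the lift of $\tilde\Gamma'$ to $C_n$) together with one of $r$ labels for $e$; the requirement $e \in \mathcal E_{\tilde\Gamma}$ forces a particular edge of $C_{n-1}$, depending on $j$, to be absent from $\tilde\Gamma'$. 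Summing over $\tilde\Gamma' \in O$ and invoking $\mathbb Z_{n-1}$-symmetry of the orbit, which forces each edge of $C_{n-1}$ to lie in a common number $k_O = |O|(r-1)/(n-1)$ of elements of $O$, the number of valid pairs $(j, \tilde\Gamma')$ is $n(|O| - k_O) = n|O|(n-r)/(n-1)$, giving $P_O = \tfrac{nr(n-r)}{n-1}|O|$. Combining this with the initial factor $2$ yields the claimed recursion.
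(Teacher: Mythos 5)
Your proof follows the same route as the paper's: rewrite $S_rL_r$ as $2[\phi_r^{-1}(\Omega(\Lambda))]$ via Lemma~\ref{lemma:argument}, intersect each component $Y_{\tilde{\Gamma}}$ transversally with $\phi_r^{-1}(\Omega(\Lambda_{\tilde{\Gamma}}))$ using Lemma~\ref{lemma:puzzle}, and then count the pairs $(\tilde{\Gamma},e)$ lying over a given $\tilde{\Gamma}/e$. The only (harmless) difference is that you obtain the factor $\frac{nr(n-r)}{n-1}$ by averaging over $\ZZ_{n-1}$-orbits, whereas the paper counts directly via the vertex-splitting description; both yield the same answer.
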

\begin{proof}
We know that $X(C_n)\cap E_1\cap\dots\cap E_r$ has $n(n-1)\dots(n-r+1)$ components, each of them is given by the graph $\tilde{\Gamma}\in\tilde{\Sigma}_n$ with $r$ edges. We denote this component by $Y_{\tilde{\Gamma}}$. By Lemma~\ref{lemma:argument}, we can intersect with $2[\phi^{-1}(\Omega(\Lambda))]$ instead of $S_rL_r$. Moreover, by Lemma~\ref{lemma:puzzle}, we can choose $\Lambda_{\tilde{\Gamma}}$, since then the corresponding variety intersects transversally with $Y_{\tilde{\Gamma}}$. Then we use Lemma~\ref{lemma:puzzle} to obtain
\begin{align*}[X(C_n)]S_1S_2\dots S_rL_rL_{n-1}^{2n-2-r}&=\sum_{\substack{\tilde{\Gamma}\in\tilde{\Sigma}_n\\|E(\tilde{\Gamma})|=r}} 2[Y_{\tilde{\Gamma}}\cap \phi^{-1}(\Omega(\Lambda_{\tilde{\Gamma}}))]L_{n-1}^{2n-2-r}\\
&=\sum_{\substack{\tilde{\Gamma}\in\tilde{\Sigma}_n\\|E(\tilde{\Gamma})|=r}}\sum_{e\in \mathcal E_{\tilde{\Gamma}}}2 [Y_{\tilde{\Gamma}/e}]L_{n-2}^{2n-2-r}.
\end{align*}
Now we would like to change this sum and sum trough all graphs with $r-1$ edges in $\tilde{\Sigma}_{n-1}$. We fix a graph $\tilde{\Upsilon}\in \tilde{\Sigma}_{n-1}$ and look how many pairs $(\tilde{\Gamma},e)$, where $e\in \mathcal E_{\tilde{\Gamma}}$ are there such that $\tilde{\Upsilon}=\tilde{\Gamma}/e$.

Firstly, it is clear that the edge $e$ can have any label from $1,\dots,r$ and if we change its label in $\tilde{\Gamma}$ to $r$ and decrease all higher labels by one we still get the same graph $\tilde{\Gamma}/e$. Thus, we may assume that $e$ has a label $r$ and multiply the number we get by $r$.

 Now, given a graph $\tilde{\Upsilon}$, consider its $n-1$ rotations, i.e. the graphs we obtain after acting with all elements in $\ZZ_n\subset\Aut(C_{n-1})$. Obviously, due to the labeling, all these graphs are different. We will compute the pairs $(\tilde{\Gamma},e)$ from which we obtain one of these graphs and $e$ has label $r$. We have to proceed in this way, since the graph $\tilde{\Gamma}/e$ is uniquely defined as the subgraph of $C_{n-1}$ only up to a rotation. We create the graph $\tilde{\Gamma}$ by replacing one of vertices of $\tilde{\Upsilon}$ by an edge. However we can use only vertices which are not left ends of an edge in $\tilde{\Upsilon}$, so we have $(n-1)-(r-1)=n-r$ vertices to choose from. The resulting graph is once again uniquely defined as the subgraph of $C_n$, only up to rotation. Thus, from all $n-1$ rotations of the graph $\tilde{\Upsilon}$ by replacing one vertex with an edge we can obtain all $n$ possible rotations of the resulting graph.
 
 Therefore, for any $\tilde{\Upsilon}\in\tilde{\Sigma}_{n-1}$ we get one of its $n-1$ rotations from $nr(n-r)$ pairs of $(\tilde{\Gamma},e)$. This means that the double sum is equal to
 
 $$[X(C_n)]S_1S_2\dots S_rL_rL_{n-1}^{2n-2-r}=\sum_{\substack{\tilde{\Gamma}\in\tilde{\Sigma}_n\\|E(\tilde{\Gamma})|=r}}\sum_{e\in \mathcal E_{\tilde{\Gamma}}}2 [Y_{\tilde{\Gamma}/e}]L_{n-2}^{2n-2-r}=$$
 $$= \sum_{\tilde{\Upsilon}\in\tilde{\Sigma}_{n-1}}\frac {2nr(n-r)}{n-1}
 [Y_{\tilde{\Upsilon}}]L_{n-2}^{2n-2-r}=\frac {2nr(n-r)}{n-1}[X(C_{n-1})]S_1\dots S_{r-1}L_{n-2}^{2n-2-r}.$$
 
 \end{proof}

Let us consider the following notations:
$$F(n, r):=\begin{cases} [X(C_n)]S_1\dots S_rL_{n-1}^{2n-1-r} & \text{if } 0\le r\le n-3\\
0 & \text{otherwise}
\end{cases}$$

$$H(n, r):=\begin{cases}
[X(C_n)]S_1\dots S_rS_{n-2}L_{n-1}^{2n-2-r}& \text{if } 0\le r\le n-3\\
0 & \text{otherwise}.
\end{cases}$$

Also let $q(n)=\frac{n+2}{4}{2n \choose n}-3\cdot 2^{2n-3}$.

\begin{Lemma} \label{lemma:FH-recurrence}
Let $n\ge 4$ and $0\le r\le n-3$. Then
$$F(n,r)=\frac{2nr}{n-1}\cdot F(n-1,r-1)+\frac 1{n-r}F(n,r+1)+\frac{(n-r-2)}{n-r}H(n,r)$$
\end{Lemma}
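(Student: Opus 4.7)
The plan is to derive the recurrence by substituting one factor of $L_{n-1}$ using identity~(\ref{relations}) with $i = r$:
\begin{equation*}
L_{n-1} = \frac{1}{n-r}\Bigl(L_r + \sum_{j=1}^{n-r-1} j\, S_{r+j}\Bigr).
\end{equation*}
Writing $F(n,r) = [X(C_n)]\, S_1\cdots S_r \cdot L_{n-1}^{2n-2-r} \cdot L_{n-1}$ and expanding the last factor via this relation expresses $F(n,r)$ as $\tfrac{1}{n-r}$ times a sum of intersection products of the form $[X(C_n)]\, S_1 \cdots S_r\, X\, L_{n-1}^{2n-2-r}$ with $X \in \{L_r, S_{r+1}, \dots, S_{n-1}\}$. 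My strategy is to match each surviving summand with one of the three terms on the right-hand side of the claimed recurrence, and to show the rest vanish.

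First I would apply Lemma~\ref{lemma:killL_i} to the $L_r$ piece to obtain $\tfrac{2nr(n-r)}{n-1}\, F(n-1, r-1)$; after dividing by $n-r$ this produces the first term of the recurrence. Next, the $j=1$ summand is by definition $F(n, r+1)$, contributing $\tfrac{1}{n-r} F(n,r+1)$, and the $j=n-r-2$ summand (coefficient $n-r-2$) matches $H(n,r)$, contributing $\tfrac{n-r-2}{n-r} H(n,r)$. For intermediate values $2 \le j \le n-r-3$ the extra factor $S_{r+j}$ satisfies $r+1 < r+j < n-2$, so Lemma~\ref{lemma:0} (with $r'=r$ and index $r+j$) forces the summand to vanish. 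For $j = n-r-1$ the extra factor is $S_{n-1}$, and since $2n-2-r \ge n$ whenever $r \le n-2$, the factor $S_{n-1} L_{n-1}^{2n-2-r}$ is a multiple of $S_{n-1} L_{n-1}^n$, which is zero by Lemma~\ref{lemma:Pataki}.

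The only delicate point is the boundary case $r = n-3$, where $j=1$ and $j=n-r-2$ coincide, both producing the single factor $S_{n-2}$. There the unique summand has coefficient $n-r-2 = 1$, which I would interpret as the $H(n,r)$ contribution; the $F(n, r+1) = F(n, n-2)$ term is simultaneously zero by convention, so the recurrence remains self-consistent. Similar sanity checks apply at $r=0$, where $L_0 = 0$ and the coefficient $\tfrac{2nr}{n-1} = 0$ both annihilate the $F(n-1, -1)$ term. I expect this boundary bookkeeping to be the only real subtlety; once the vanishing lemmas are applied carefully to the correct ranges of $j$, the rest is a short algebraic identity.
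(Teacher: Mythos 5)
Your proposal is correct and follows essentially the same route as the paper: expand one factor of $L_{n-1}$ via relation~(\ref{relations}) with $i=r$, convert the $L_r$ term with Lemma~\ref{lemma:killL_i}, identify the $j=1$ and $j=n-r-2$ summands with $F(n,r+1)$ and $H(n,r)$, and kill the remaining summands with Lemmas~\ref{lemma:0} and~\ref{lemma:Pataki}. Your treatment of the boundary cases $r=0$ and $r=n-3$ matches the paper's conventions as well, so there is nothing to add.
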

\begin{proof}
We expand $L_{n-1}$ using the equation (\ref{relations}) from Section~\ref{quadrics}:
\begin{align*}
F(n,r)&=[X(C_n)]S_1\dots S_rL_{n-1}^{2n-1-r}\\
&=\frac{1}{n-r}\left([X(C_n)]S_1\dots S_rL_rL_{n-1}^{2n-2-r}+\sum_{j=1}^{n-r-1}j[X(C_n)]S_1\dots S_rS_{r+j}L_{n-1}^{2n-1-r} \right)\\
&=\frac{1}{n-r}\Bigg([X(C_{n-1})]S_1\dots S_{r-1}L_{n-1}^{2n-2-r}+[X(C_n)]S_1\dots S_rS_{r+1}L_{n-1}^{2n-2-r}+\\
&\ +(n-r-2)[X(C_n)]S_1\dots S_rS_{n-2}L_{n-1}^{2n-2-r}\Bigg)\\
&=\frac{1}{n-r}\Bigg(\frac{2nr(n-r)}{n-1}\cdot F(n-1,r-1)+ F(n,r+1)+(n-r-2)H(n,r)\Bigg)\\
&=\frac{2nr}{n-1}\cdot F(n-1,r-1)+\frac 1{n-r}F(n,r+1)+\frac{(n-r-2)}{n-r}H(n,r).
\end{align*}
We used Lemma \ref{lemma:killL_i} for the first term and all terms of the sum are 0 by Lemmas \ref{lemma:0} and \ref{lemma:Pataki}. Note that for the border cases $r=0$ and $r=n-3$ the formula still holds even though some of the terms appearing in expansion of $L_{n-1}$ do not exist. If that is the case, the corresponding term $F(n-1,r-1)$ or $F(n,r+1)$ is defined to be 0. 
\end{proof}

In \cite{mateusz2}, the authors used the Lascoux coeffiecients $\psi_I$ to compute the intersections of divisors in $\CQ(V)$. For the definition we refer the reader to \cite{mateusz2} and \cite{LLT}. 
We will also use them later in this section. We will need the following result.

\begin{Lemma}\label{lemma:psi}
Let $0\le a<b$. Then $$\psi_{a,b}=\frac 12 \left({a+b+1 \choose a+1}+{a+b+1 \choose a+2}+\dots+{a+b+1 \choose b}\right).$$
\end{Lemma}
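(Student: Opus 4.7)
The formula is a closed-form evaluation of the Lascoux coefficient for a two-element strict index, so the natural plan is to combine the combinatorial/geometric definition of $\psi_{a,b}$ from \cite{mateusz2,LLT} with a small induction. I will proceed in three steps: (i) read off $\psi_{a,b}$ from its defining formula; (ii) check a base case; (iii) match a Pascal-type recursion on both sides.

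First, write
\[
R(a,b) := \frac{1}{2}\Bigl(\binom{a+b+1}{a+1} + \binom{a+b+1}{a+2} + \dots + \binom{a+b+1}{b}\Bigr),
\]
so that the claim is $\psi_{a,b} = R(a,b)$. Using $\binom{a+b+1}{k}=\binom{a+b+1}{a+b+1-k}$ together with $\sum_{k=0}^{a+b+1}\binom{a+b+1}{k}=2^{a+b+1}$, one immediately gets the equivalent reformulation
\[
R(a,b) = 2^{a+b} - \sum_{k=0}^{a}\binom{a+b+1}{k},
\]
which will be easier to match against the values of $\psi_{a,b}$ on small inputs; for the base case $a=0$ this gives $R(0,b)=2^{b}-1$, which is the standard value of $\psi_{0,b}$ and should be readable off directly from the definition in \cite{mateusz2}.

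Next, I would derive a recursion for $R$. Applying Pascal's rule $\binom{a+b+1}{k}=\binom{a+b}{k}+\binom{a+b}{k-1}$ to each term and reindexing the second piece ($k \mapsto k-1$), the sum splits as
\[
2R(a,b)=\sum_{k=a+1}^{b}\binom{a+b}{k}+\sum_{k=a}^{b-1}\binom{a+b}{k}.
\]
The first sum equals $2R(a,b-1)+\binom{a+b}{b}$ and the second equals $2R(a-1,b)+\binom{a+b}{a}$. Since $\binom{a+b}{a}=\binom{a+b}{b}$, this yields the shifted-Pascal recursion
\[
R(a,b)=R(a-1,b)+R(a,b-1)+\binom{a+b}{a},
\]
valid for $0<a<b$ (with the obvious conventions $R(a,a)=0$ and $R(-1,b)$ interpreted via the closed form). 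A parallel recursion for $\psi_{a,b}$ follows from the definition in \cite{LLT}: Lascoux coefficients for strict two-part indices satisfy exactly such a Pieri/Pascal identity, which is the key input I will quote from the references.

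Finally, I close the argument by induction on $a+b$: the base case $a=0$ was verified above, and the inductive step is immediate from matching the two Pascal-type recursions. The main obstacle is not the combinatorics but the bookkeeping at the boundary of the recursion, namely the cases $a=0$, $b=a+1$ (single-term sum), and the correct interpretation of $R(a-1,b)$ when $a=0$; these need to be checked by hand against the definition of $\psi_{a,b}$, but each is a one-line verification. No heavy machinery is required once the recursion on the $\psi$-side is taken from \cite{mateusz2,LLT}.
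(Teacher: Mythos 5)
Your plan reduces the lemma to matching a Pascal-type recursion on both sides, but both halves of that reduction have problems. On the $R$-side there is an algebra error: since $2R(a-1,b)=\sum_{k=a}^{b}\binom{a+b}{k}$, the second sum $\sum_{k=a}^{b-1}\binom{a+b}{k}$ equals $2R(a-1,b)-\binom{a+b}{b}$, not $2R(a-1,b)+\binom{a+b}{a}$. The two boundary binomials therefore cancel, and the correct recursion is simply $R(a,b)=R(a-1,b)+R(a,b-1)$; the recursion you state, with the extra term $\binom{a+b}{a}$, is false (for instance $R(1,2)=\frac12\binom{4}{2}=3$, whereas $R(0,2)+R(1,1)+\binom{3}{1}=3+0+3=6$). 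More seriously, on the $\psi$-side the entire content of the lemma is the claim that $\psi_{a,b}$ satisfies the same recursion with the same boundary values, and you never exhibit this: you only assert that a ``Pieri/Pascal identity'' for two-part Lascoux coefficients can be quoted from \cite{mateusz2,LLT}, without stating it or identifying where it appears. That assertion is the key step, so as written the argument has a genuine gap there in addition to the algebra error.

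The paper avoids all of this by quoting the explicit closed form from \cite[Proposition A.15(5)]{LLT}, namely $\psi_{a,b}=\frac12\binom{a+b}{a}+\binom{a+b}{a+1}+\dots+\binom{a+b}{b-1}+\frac12\binom{a+b}{b}$, rewriting it as $\sum_{k=a}^{b-1}\frac12\bigl(\binom{a+b}{k}+\binom{a+b}{k+1}\bigr)$, and applying Pascal's rule; this gives the stated formula in two lines with no induction and no boundary cases. If you want to salvage your route, you would need to either derive a recursion for $\psi_{a,b}$ from that same explicit formula (at which point the direct computation is shorter) or locate a genuinely independent recursion in \cite{LLT} and check that it is exactly $\psi_{a,b}=\psi_{a-1,b}+\psi_{a,b-1}$ with $\psi_{a,a}=0$ and $\psi_{0,b}=2^{b}-1$.
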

\begin{proof}
From \cite[Proposition A.15(5)]{LLT} and using the fact ${a+b\choose a}={a+b\choose b}$, we have 
\begin{align*}
    \psi_{a,b}&=\frac 12{a+b \choose a}+{a+b \choose a+1}+\dots+{a+b \choose b-1}+\frac 12{a+b \choose b}\\
    &=\frac12 \left({a+b \choose a}+{a+b \choose a+1}\right)+\frac12 \left({a+b \choose a+1}+{a+b \choose a+2}\right)+\dots+\frac12 \left({a+b \choose b-1}+{a+b \choose b}\right)\\
    &=\frac 12 \left({a+b+1 \choose a+1}+{a+b+1 \choose a+2}+\dots+{a+b+1 \choose b}\right).
\end{align*}
\end{proof}

Let $b_1\ge b_2$ and $a_1+\dots+a_k=b_1+b_2$. Then we define the coefficients $M(a_1,\dots,a_k;b_1,b_2)$ to be the numbers which satisfy the following identity:

$$s_{a_1}(x_1,x_2)s_{a_2}(x_1,x_2)\dots s_{a_k}(x_1,x_2)=\sum_{\substack{b_1<b_2\\
b_1+b_2=a_1+\dots+a_k}}M(a_1,\dots,a_k;b_1,b_2)s_{b_1,b_2}(x_1,x_2).$$

The most of the following proofs will be bijective proofs and we will be counting the number of paths in the grid with some properties. Our grid will be $\NN\times \NN$ and we denote the coordinates by $x$ and $y$.

\begin{Lemma}\label{lemma:pathidentity}
Let $r\le n-3$. Then the following identity holds:
$$\sum_{j=n-1-r}^{n-1} {2n-2-r \choose j}{j\choose n-1-r}{2n-2-r-j\choose n-1-r}=\frac{(2n-2-r)!2^r}{(n-r-1)!(n-r-1)!r!}.$$
\end{Lemma}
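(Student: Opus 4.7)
The plan is to substitute $N=2n-2-r$ and $k=n-1-r$ (so that $r=N-2k$) to rewrite the identity in a cleaner form, and then recognize the summand as a trinomial coefficient times a binomial coefficient.

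First, I would observe that the three binomial coefficients can be combined into a single multinomial coefficient. More precisely, a direct factorial manipulation gives
\[
\binom{N}{j}\binom{j}{k}\binom{N-j}{k} = \frac{N!}{k!\,k!\,(j-k)!\,(N-j-k)!}.
\]
Since $N-2k = r$, the two indices $j-k$ and $N-j-k$ are nonnegative integers summing to $r$. Pulling out the factor $\frac{N!}{k!\,k!\,r!}$ yields
\[
\binom{N}{j}\binom{j}{k}\binom{N-j}{k} = \binom{N}{k,k,r}\binom{r}{j-k}.
\]
This factorization is the main algebraic step; it is the right substitution to try because the right-hand side of the lemma is visibly a multinomial coefficient times $2^r$, suggesting the $2^r$ must arise from a full row sum of a binomial of order $r$.

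Next, summing over $j$ from $k$ to $N-k$ corresponds exactly to summing $\binom{r}{j-k}$ over $j-k = 0, 1, \ldots, r$, giving
\[
\sum_{j=k}^{N-k}\binom{r}{j-k} = \sum_{i=0}^{r}\binom{r}{i} = 2^r.
\]
Combined with the previous step, this immediately gives
\[
\sum_{j=n-1-r}^{n-1}\binom{2n-2-r}{j}\binom{j}{n-1-r}\binom{2n-2-r-j}{n-1-r} = \binom{N}{k,k,r}\,2^r = \frac{(2n-2-r)!\,2^r}{(n-r-1)!\,(n-r-1)!\,r!},
\]
as desired.

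There is no real obstacle here once one sees the factorization; the only thing to double-check is the range of summation (namely that $j-k$ really runs over all of $\{0,1,\ldots,r\}$, which follows from $N-k = k+r$). The identity is thus a one-line binomial manipulation after the right regrouping.
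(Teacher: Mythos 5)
Your proof is correct: the factorization
\[
\binom{N}{j}\binom{j}{k}\binom{N-j}{k}=\frac{N!}{k!\,k!\,(j-k)!\,(N-j-k)!}=\binom{N}{k,\,k,\,r}\binom{r}{j-k}
\]
checks out (since $(j-k)+(N-j-k)=N-2k=r$), the summation range $j=k,\dots,N-k$ does make $j-k$ sweep all of $\{0,\dots,r\}$, and the row sum gives $2^r$, matching the right-hand side because $(n-1-r)!=(n-r-1)!$. The paper proves the same identity by a bijective double count: it introduces two-colored lattice paths with $n-r-1$ blue right steps, $n-r-1$ blue up steps, and $r$ red steps of undetermined direction, counts them once by fixing the endpoint $(j,2n-2-r-j)$ (giving the left-hand side) and once by first choosing the linear arrangement of step types and then the $2^r$ direction choices for the red steps (giving the right-hand side). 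Your algebraic regrouping is exactly the generating-function shadow of that bijection --- the multinomial coefficient records the arrangement of the three step types and $\binom{r}{j-k}$ records how many red steps go right, which determines the endpoint --- but your version is shorter and avoids having to verify that a map is a bijection; the paper's version has the advantage of fitting stylistically with the neighboring Lemma on the coefficients $M(a_1,\dots,a_k;b_1,b_2)$, where genuinely bijective path arguments are needed. Either proof is complete and correct.
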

\begin{proof}
We will give a bijective combinatorial proof. Consider the set of two-colored paths in the grid with the following properties:
\begin{itemize}
    \item The path starts in $(0,0)$ and ends in one of the points $(n-1-r,n-1),(n-r,n-2),(n-r+1,n-3)\dots,(n-1,n-r-1)$.
    \item The path goes only up and right.
    \item Exactly $r$ steps of the path are colored red, the rest of the path is blue.
    \item There are exactly $n-r-1$ blue steps which go right and $n-r-1$ blue steps which go up.
\end{itemize}
We will show that both LHS and RHS represent the number of such paths.

LHS: We can fix the endpoint and denote it by $(j,2n-r-2-j)$, where $n-1-r\le j\le n-1$. Then we choose non-colored path from $(0,0)$ to $(j,2n-r-2-j)$ which goes only up and right. There are ${2n-2-r \choose j}$ options how to do it. Then we need to choose $n-r-1$ steps which will be colored blue from $j$ steps going right and also we need to choose $n-r-1$ steps which will be colored blue from $2n-r-2-j$ steps going up. Thus, for the coloring we have ${j\choose n-1-r}{2n-2-r-j\choose n-1-r}$ options. Together, the number of options is $$\sum_{j=n-1-r}^{n-1} {2n-2-r \choose j}{j\choose n-1-r}{2n-2-r-j\choose n-1-r},$$ which is precisely LHS.

RHS: 
We first choose the sequence of steps of length $2n-2-r$ with $n-r-1$ blue steps going up, $n-r-1$ blue steps going right and $r$ red steps. Note that we have not chosen the direction of the red steps yet. For this we clearly have $\frac{(2n-2-r)!}{(n-r-1)!(n-r-1)!r!}$ options. Next we decide for each red step whether it goes up or right, we multiply the number by $2^r$. 
We get exactly the paths with the four properties we mentioned at the beginning. This way we see that the number of such paths is equal to $\frac{(2n-2-r)!2^r}{(n-r-1)!(n-r-1)!r!}$ which concludes the lemma.
\end{proof}

\begin{Lemma}\label{lemma:M}
Let $a_1+\dots+a_k=b_1+b_2$ and $b_1\ge b_2$. Then $M(a_1,\dots,a_k;b_1,b_2)$ represents the number of all paths in the grid with marked points with the following properties:
\begin{itemize}
    \item The path starts in $(0,0)$ and ends in $(b_1,b_2)$.
    \item The paths goes only right and up, and whole path is under diagonal $x=y$.
    \item Exactly $k+1$ points on the path are marked, including the points $(0,0)$ and $(b_1,b_2)$.
    \item Between two marked points the path go first up, and then right. It is possible that the path goes only up or only right between two marked points.
    \item Between $i$-th and $(i+1)$-th marked points in the path there are exactly $a_i$ steps.
\end{itemize}
\end{Lemma}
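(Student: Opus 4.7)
The plan is to prove this bijectively using the iterated Pieri rule. The key observation is that in two variables $s_a(x_1,x_2)$ coincides with the complete symmetric function $h_a(x_1,x_2)$, so Pieri's rule applies cleanly: for any partition $\mu=(\mu_1,\mu_2)$ with at most two parts,
\[
s_\mu(x_1,x_2)\cdot s_a(x_1,x_2)=\sum_\lambda s_\lambda(x_1,x_2),
\]
where the sum runs over two-row partitions $\lambda=(\lambda_1,\lambda_2)$ with $\lambda_1\ge\mu_1\ge\lambda_2\ge\mu_2$ and $|\lambda|=|\mu|+a$, i.e.\ over horizontal strips of size $a$.

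First, I would iterate Pieri starting from $\mu=\emptyset$ to expand $s_{a_1}\cdots s_{a_k}$ and read off that
\[
M(a_1,\dots,a_k;b_1,b_2)=\#\bigl\{\text{chains }\emptyset=\lambda^{(0)}\subset\lambda^{(1)}\subset\cdots\subset\lambda^{(k)}=(b_1,b_2)\bigr\},
\]
where each $\lambda^{(i)}/\lambda^{(i-1)}$ is a horizontal strip of size $a_i$; equivalently, $\lambda^{(i)}_1\ge\lambda^{(i-1)}_1\ge\lambda^{(i)}_2\ge\lambda^{(i-1)}_2$ with $|\lambda^{(i)}|-|\lambda^{(i-1)}|=a_i$.

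Second, I would biject such chains with the lattice paths of the lemma. To a chain, associate the path whose $i$-th marked point is $P_i:=(\lambda^{(i)}_1,\lambda^{(i)}_2)$, and whose segment from $P_{i-1}$ to $P_i$ consists of $\lambda^{(i)}_2-\lambda^{(i-1)}_2$ up-steps followed by $\lambda^{(i)}_1-\lambda^{(i-1)}_1$ right-steps. By construction this segment has $a_i$ steps, passes through the corner $(\lambda^{(i-1)}_1,\lambda^{(i)}_2)$, and the requirement that the whole path stay weakly under the diagonal $y=x$ reduces exactly to $\lambda^{(i)}_2\le\lambda^{(i-1)}_1$ at every corner, i.e.\ to the horizontal-strip condition (together with $\lambda^{(i)}_1\ge\lambda^{(i)}_2$ at the marked points, which just says $\lambda^{(i)}$ is a partition). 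Conversely, from any path satisfying the four conditions of the statement, reading off the marked points produces such a chain, giving the inverse map.

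The main obstacle is purely notational: carefully aligning the horizontal-strip inequality $\lambda^{(i)}_2\le\lambda^{(i-1)}_1$ with the ``under the diagonal'' constraint imposed at the corner $(\lambda^{(i-1)}_1,\lambda^{(i)}_2)$, which is precisely what forces the ``up first, then right'' structure inside each segment. Once this correspondence is verified, the lemma follows directly from the iterated Pieri rule.
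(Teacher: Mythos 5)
Your proposal is correct and takes essentially the same route as the paper: both apply Pieri's rule (the paper phrases the result as semistandard fillings of the shape $(b_1,b_2)$ with content $(a_1,\dots,a_k)$, you phrase it as chains of partitions with horizontal-strip differences, which encode the same data) and then use the identical bijection sending partial shapes to marked points, second-row growth to up-steps, and first-row growth to right-steps. Your observation that the under-diagonal constraint bites precisely at the corners $(\lambda^{(i-1)}_1,\lambda^{(i)}_2)$ and is equivalent to the interlacing/column-strictness condition matches the paper's argument.
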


\begin{proof}
For the multiplication of Schur polynomials we apply the Pieri's rule. Thus, in the expansion of $s_{a_1}\dots s_{a_k}$ the coefficient of $s_{b_1,b_2}$ is the number of ways of filling the Young diagram $(b_1,b_2)$ with numbers $1,\dots,k$ such that in the rows the numbers are weakly increasing, in the columns the numbers are increasing and the number $i$ is used exactly $a_i$ times. 

We can interpret this as the path in the grid as follows: We look at the number of ones in the second row and make that many steps up. Then we look at the number of ones in the first row and make that many steps right. Then we mark the point we end on and repeat the procedure with $2,3\dots,k$.

We claim that the path constructed in this way lies under the diagonal $x=y$. Indeed, this follows from the fact that the columns are increasing, or in the other words, when we are multiplying Young diagrams, we can not have at any point more boxes in the second row than in the first. Moreover, the path satisfies all other required properties from the statement.

On the other hand, from every such path we can easily recover the filling of the Young diagram, which proves the lemma.
\end{proof}

\begin{Lemma}\label{lemma:multiplyingschurrows}
Let $r\le n-3$ and $n-r-1\le j\le n-1$. Then the following identity holds:
$$\sum_{\substack{\Gamma\in\Sigma'_n\\|E(\Gamma)|=r}}\sum_{\substack{b_1\ge b_2\ge 0\\b_1+b_2=r\\b_2\le n-1-j,j-(n-1-r)}} M(\gamma_1,\dots,\gamma_k;b_1,b_2)={j\choose n-1-r}{2n-2-r-j\choose n-1-r}.$$
\end{Lemma}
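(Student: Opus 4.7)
The plan is to reduce both sides to the extraction of a single monomial coefficient in $x_1,x_2$ from a simple rational generating function. Write $\alpha=j-(n-1-r)$ and $\beta=n-1-j$, so that $\alpha+\beta=r$, and set $c=\min(\alpha,\beta)$; note $c\le r/2$. The two-row Schur polynomial satisfies $s_{b_1,b_2}(x_1,x_2)=\sum_{i=b_2}^{b_1}x_1^{b_1+b_2-i}x_2^i$, so the monomial $x_1^{r-c}x_2^c$ appears with coefficient $1$ in $s_{b_1,b_2}$ exactly when $b_1\ge b_2\ge0$, $b_1+b_2=r$ and $b_2\le c\le b_1$; these conditions simplify (using $c\le r/2$) to $b_2\le c$. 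Comparing this with the constraints $b_2\le n-1-j$ and $b_2\le j-(n-1-r)$ in the LHS (which are precisely $b_2\le c$) and using the defining identity $\prod_i s_{\gamma_i}(x_1,x_2)=\sum_{b_1\ge b_2}M(\gamma_1,\ldots,\gamma_k;b_1,b_2)\,s_{b_1,b_2}(x_1,x_2)$, I obtain
\[
\text{LHS}=[x_1^{r-c}x_2^c]\sum_{\substack{\Gamma\in\Sigma'_n\\ |E(\Gamma)|=r}}\prod_{i=1}^{k}s_{\gamma_i}(x_1,x_2).
\]

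Next I would evaluate the inner sum via a generating function. A graph $\Gamma\in\Sigma'_n$ with $r$ edges corresponds to a word $w\in\{U,C\}^{n-1}$ with exactly $r$ letters $C$, whose maximal $C$-runs have lengths $\gamma_1,\ldots,\gamma_k$. Decomposing each word uniquely as $U^{g_0}C^{\gamma_1}U^{g_1}\cdots C^{\gamma_k}U^{g_k}$ with $g_0,g_k\ge0$ and $g_1,\ldots,g_{k-1}\ge1$, and summing over all words weighted by $t^{\#U(w)}\prod_i s_{\gamma_i}(x_1,x_2)$, a direct geometric series computation gives
\[
F(t):=\sum_{w}t^{\#U(w)}\prod_i s_{\gamma_i}(x_1,x_2)=\frac{1+S}{1-t(1+S)},\qquad S:=\sum_{\gamma\ge1}s_\gamma(x_1,x_2).
\]
Since $1+S=\sum_{\gamma\ge0}h_\gamma(x_1,x_2)=\frac{1}{(1-x_1)(1-x_2)}$, this simplifies to $F(t)=\frac{1}{(1-x_1)(1-x_2)-t}$, and extracting $[t^{n-1-r}]$ (words of length $n-1$ carry $n-1-r$ letters $U$) yields
\[
\sum_{\Gamma}\prod_i s_{\gamma_i}(x_1,x_2)=\frac{1}{\bigl((1-x_1)(1-x_2)\bigr)^{n-r}}.
\]

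Finally, the standard expansion $(1-x)^{-(n-r)}=\sum_{m\ge0}\binom{n-r+m-1}{m}x^m$ gives
\[
[x_1^{r-c}x_2^c]\frac{1}{\bigl((1-x_1)(1-x_2)\bigr)^{n-r}}=\binom{n-c-1}{r-c}\binom{n-r+c-1}{c}.
\]
Plugging in $c=\alpha=j-n+1+r$ (the case $\alpha\le\beta$) and using the identities $n-c-1=2n-2-r-j$, $r-c=n-1-j$, $n-r+c-1=j$ together with $\binom{N}{k}=\binom{N}{N-k}$ rewrites this as $\binom{j}{n-1-r}\binom{2n-2-r-j}{n-1-r}$, matching the RHS; the case $c=\beta$ gives the same answer by the $x_1\leftrightarrow x_2$ symmetry of the generating function. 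I expect the main technical point to be the geometric series computation for $F(t)$, where the asymmetry between the boundary gaps $g_0,g_k$ (allowed to be empty) and the internal gaps $g_i$ (required to be nonempty) must be treated carefully; all remaining steps are routine binomial manipulations.
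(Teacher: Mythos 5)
Your proof is correct, and it takes a genuinely different route from the paper's. The paper argues bijectively: it uses the lattice-path interpretation of $M(\gamma_1,\dots,\gamma_k;b_1,b_2)$ from Lemma~\ref{lemma:M} (Pieri's rule) to encode the left-hand side as a set $\mathcal{P}$ of marked sub-diagonal paths with assigned integers, encodes the right-hand side as pairs of binary sequences, equivalently a second family $\mathcal{P}'$ of marked paths with a prescribed endpoint and no diagonal constraint, and then matches $\mathcal{P}$ with $\mathcal{P}'$ by a reflection-type bijection that flips steps one at a time. You instead observe that the constraint $b_2\le\min(n-1-j,\,j-(n-1-r))$ is precisely the condition for $s_{b_1,b_2}(x_1,x_2)$ to contain the monomial $x_1^{r-c}x_2^c$ with coefficient one, so the entire left-hand side is a single monomial coefficient of $\sum_\Gamma\prod_i s_{\gamma_i}$, and the transfer-matrix computation $F(t)=(1+S)/(1-t(1+S))$ with $1+S=((1-x_1)(1-x_2))^{-1}$ collapses the sum over $\Gamma$ to $((1-x_1)(1-x_2))^{-(n-r)}$ --- more precisely to its degree-$r$ homogeneous part, which is how your displayed equality should be read, though this does not affect the subsequent coefficient extraction. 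I verified the geometric-series computation (including the asymmetric treatment of the boundary gaps $g_0,g_k$ versus the internal gaps) and the final binomial identifications in both cases $c=\alpha$ and $c=\beta$; all are correct, and in fact the formula $\binom{n-c-1}{r-c}\binom{n-r+c-1}{c}$ evaluates to the right-hand side for either choice without needing the symmetry argument. Your route bypasses Lemma~\ref{lemma:M} entirely and is shorter and less delicate than the double bijection; what it gives up is the explicit combinatorial correspondence between the two sides, which the paper's proof exhibits and illustrates in the example following the lemma.
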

\begin{proof}
Clearly, if we replace $j$ by $2n-2-r-j$, then the values of both sides do not change. Thus, we may assume $j\le (2n-2-r)/2$.

We consider the set $\mathcal P$ of all paths in the grid, with marked points with assigned numbers with the following properties:
\begin{itemize}
    \item The path starts in $(0,0)$ and ends in $(b_1,b_2)$, such that $b_1+b_2=r$ and $b_2\le n-1-j$.
    \item The paths goes only right and up, and whole path is under the diagonal $x=y$.
    \item Some of the points on the path are marked, including the points $(0,0)$ and $(b_1,b_2)$.
    \item Between two marked points the path go first up, and then right. It is possible that the path goes only up or only right between two marked points.
    \item To every marked point is assigned a non-negative integer such that the sum of all assigned integers is $n-1-r$. Moreover, this integer is positive on all marked points in the middle of the path, i.e. it can be 0 only in $(0,0)$ and $(b_1,b_2)$.
\end{itemize}

We show that LHS represents the number of such paths. 

Firstly, we pick the end point $(b_1,b_2)$, then we choose the number of marked points, which we denote by $k+1$. Clearly, $k\le n-r$, otherwise it would not be possible to assign the integers. Then we choose the numbers $a_1,\dots,a_k$ which represents the length of the path between the marked points. Then we choose the non-negative integers $c_1,\dots,c_{k+1}$ such that $c_1+\dots+c_{k+1}=n-1-r$ and $c_2,\dots,c_k>0$, which will be the assigned integers to the marked points. Then by Lemma~\ref{lemma:M}, the number of such paths without assigned integers is $M(a_1,\dots,a_k;b_1,b_2)$. Thus together the number of such paths is $$\sum_{k=1}^{n-r}\sum_{a_1+\dots+a_k=r}\ \sum_{\substack{c_1+\dots+c_{k+1}=n-1-r\\
c_2,\dots,c_k>0}}\sum_{\substack{b_1\ge b_2\ge 0\\b_1+b_2=r\\b_2\le n-1-j,j-(n-1-r)}} M(\gamma_1,\dots,\gamma_k;b_1,b_2)$$

We claim that every choice of $a_1,\dots,a_k,c_1,\dots,c_{k+1}$ corresponds to a graph $\Gamma\in\Sigma'_n$ with $\gamma_i=a_i$. Indeed, given the numbers $a_1,\dots,a_k,c_1,\dots,c_{k+1}$ we can construct the graph $\Gamma\in\Sigma'_n$ as follows: We look at $\Gamma$ as the subgraph of the path of the length $n-1$ and we start drawing this graph from one end. The first $c_1$ edges will not be the edges of $\Gamma$. Then the next $a_1$ edges will be edges of $\Gamma$, next $c_2$ edges will not be edges of $\Gamma$ and so on. It has $r$ edges and $\gamma_i=a_i$. Moreover, from the graph $\Gamma\in\Sigma'_n$ we can get back the numbers $a_1,\dots,a_k,c_1,\dots,c_{k+1}$.

Thus the number of elements in $\mathcal P$ is equal to LHS.

Next we consider another set $\mathcal P'$ of paths with marked points and assigned numbers and the following properties:

\begin{itemize}
    \item The path starts in $(0,0)$ and ends in $(r-n+1+j,n-1-j)$.
    \item The path goes only right and up, but it may goes above the diagonal.
    \item Some of the points on the path are marked, including the points $(0,0)$ and $(r-n+1+j,n-1-j)$.
    \item Between two marked points the path go first up, and then right. It is possible that the path goes only up or only right between two marked points.
    \item To every marked point is assigned a non-negative integer such that the sum of all assigned integers is $n-1-r$. Moreover, this integer is positive on all marked points in the middle of the path, i.e. it can be 0 only in $(0,0)$ and $(r-n+1+j,n-1-j)$.
\end{itemize}

We claim the number of such paths is the same as the number of pairs of binary sequences $(U,R)$, such that the sequence $U$ has $n-1-j$ ones and $n-1-r$ zeros and $R$ has $r-n+1+j$ ones and $n-1-r$ zeros. The number of such pairs of sequences is ${j\choose n-1-r}{2n-2-r-j\choose n-1-r}=RHS$. 

Consider a path from $\mathcal P'$. We construct the sequence $U$ as follows: We travel on our path from $(0,0)$ to $(r-n+1+j,n-1-j)$ and every time we find a marked point we write that many zeros to our sequence as is the assigned integer to that marked point. Everytime we find a step up, we write one in our sequence. Similarly, we construct a sequence $R$, the only difference is that we write one for every step right. Clearly, the sequences constructed in this way have the desired number of zeros and ones.

On the other hand, we consider a pair of binary sequences $(U,R)$. We denote by $u_1$ the number of ones in front of the first zero in $U$. Then we denote $u_2$ the number of ones between the first an the second zero in $U$ and so on, with $u_{n-r}$ being the number of ones behind the last zero in $U$. Analogously, we define $\rho_1,\dots \rho_{n-r}$. We construct a path as follows: We mark point $(0,0)$ and assign 0 to it. We make $u_1$ steps up and $\rho_1$ steps right, then we mark the last point and assign 1 to it. Then 
we make $u_2$ steps up and $\rho_2$ steps right and we mark the last point and assign 1 to it, and so on. There is one exception. Every time we have $u_i=\rho_i=0$ we obviously do not make any steps and there is no new point to be marked. When this happens we instead increase the assigned number to the last point by 1. Clearly, this way we obtain a path from $\mathcal P'$.

Moreover, it is not difficult to check that these constructions are inverses of each other and thus they are bijections between the paths and binary sequences.
Thus, the number of elements of $\mathcal P'$ is represented by the RHS of the equation from the statement.

To conclude the lemma, we need to show that the number of paths in $\mathcal P$ is the same as the number of paths in $\mathcal P'$. Again, we will give a bijective proof.

Consider a path from $\mathcal P'$. If the path is under the diagonal we do not change it and we have a path from $\mathcal P$. Otherwise, we look at the first point on the path that is most far away from the diagonal, i.e. on the first point on the path with maximal difference $y-x=a$. The step which got us here must be step up. Now we change the direction of this step, i.e. we go right instead of going up, and we do not change the direction of the other steps. We also do not change the marked points and the assigned numbers. Note that the next step is going right, otherwise we would have a point which is even further away from the diagonal. Thus the condition, that between two marked points we go first up and then right is still satisfied. Moreover our path is now under the line $y=x+a-1$ and ends in $(r-n+2+j,n-2-j)$.

We continue similarly: We find the first point on the new path which lies on the line $y=x+a-1$. Again, the step which got us here is the step up and we change the direction of this step. We obtain a path which is under the line $y=x+a-2$ and we continue until we get a path under the diagonal $x=y$. Clearly, this path is from $\mathcal P$. Thus, we assign to each path from $\mathcal P'$ a path from $\mathcal P$.

Now we consider a path from $\mathcal P$, which ends in $(b_1,b_2)$. If $b_2=n-1-j$ we do not change it and we have a path from $\mathcal P'$. Otherwise, we look at the last point on the path which is on the diagonal $y=x$. The step from this point must go right and we change its direction. This means this step goes up. We do not change the direction of the other steps, marked points and assigned numbers. Note that the previous step must be the step up, since our path is under the diagonal $x=y$. Thus, we do not break the condition about the path between two marked points. This way we obtain a path which ends in $(b_1-1,b_2+1)$ and is under the line $y=x+1$. If $b_2+1=n-1-j$ we finish and we have the path from $\mathcal P'$. Otherwise we continue. We find the last point on the new path which is on the line $y=x+1$. The step from this point goes right and we change its direction. We obtain a path which ends in $(b_1-2,b_2+2)$ and is under the line $y=x+2$. Then we look at the last point on this line and so on. We keep going until we obtain a path which ends in $(r-n+1+j,n-1-j)$. Clearly, this path will be from $\mathcal P'$.

Thus, we can associate to a path from $\mathcal P$ a path from $\mathcal P'$ and vice versa. It is straightforward to check that the two constructions above are inverses to each other which means that we constructed a bijection between $\mathcal P$ and $\mathcal P'$ which proves the lemma.
\end{proof}

\begin{Example}

We illustrate the proof of Lemma~\ref{lemma:multiplyingschurrows} in the following example by taking $r=9$, $n=15$, $j=10$, $b_1=7$, $b_2=2$.

Consider the following subgraph $\Gamma$ of $C_{15}$ in $\Sigma'_{15}$ with $\gamma_1=\gamma_2=3,\gamma_3=1,\gamma_4=2$.

\begin{figure}[H]
\begin{tikzpicture}
\foreach \x in {1,2,...,15}{
\filldraw [black] (\x,0) circle (3pt);}
\draw [black, ultra thick] (2,0)--(5,0); 
\draw [black, ultra thick] (6,0)--(9,0);
\draw [black, ultra thick] (11,0)--(12,0);
\draw [black, ultra thick] (13,0)--(15,0); 
\foreach \x in {1,2,...,15}{
\node [right, black, ultra thick] at (\x-0.2,-0.3) {\x};}
\end{tikzpicture}
\end{figure}

 Consider the following term from the multiplication of Young diagrams corresponding to the graph $\Gamma$:

\ytableausetup{centertableaux}
\[\begin{ytableau}
1 & 1 & 1 \\
\end{ytableau}
\cdot
\begin{ytableau}
2 & 2 & 2 \\
\end{ytableau}
\cdot
\begin{ytableau}
3 \\
\end{ytableau}
\cdot
\begin{ytableau}
4 & 4 \\
\end{ytableau}
\leadsto
\begin{ytableau}
1 & 1 & 1 & 2& 2 \\
2&3&4&4
\end{ytableau}\]

To this graph and Young diagrams correspond a path from $\mathcal P$, which is the blue path on the Figure~\ref{fig}. To this path correspond a path from $\mathcal P'$ which is the red path on the Figure~\ref{fig}.
\begin{figure}[H]
\begin{tikzpicture}
\tikzstyle{every node}=[font=\Large]
\draw [red, ultra thick](0,0) --(0,1) -- (1,1) -- (2,1) -- (2,2)-- (2,3)-- (2,4)-- (3,4)-- (4,4)--(5,4);
\draw [blue, ultra thick](0,0) --(3,0) -- (3,2) -- (7,2);
\filldraw [black] (0,0) circle (3pt);
\filldraw [black] (3,0) circle (3pt);
\filldraw [black] (4,2) circle (3pt);
\filldraw [black] (5,2) circle (3pt);
\filldraw [black] (7,2) circle (3pt);
\filldraw [black] (2,1) circle (3pt);
\filldraw [black] (2,4) circle (3pt);
\filldraw [black] (3,4) circle (3pt);
\filldraw [black] (5,4) circle (3pt);
\node [right, black, ultra thick] at (0,-0.3) {1};
\node [right, black, ultra thick] at (2,0.7) {1};
\node [right, black, ultra thick] at (2,3.7) {2};
\node [right, black, ultra thick] at (3,3.7) {1};
\node [right, black, ultra thick] at (5,3.7) {0};
\node [right, black, ultra thick] at (3,-0.3) {1};
\node [right, black, ultra thick] at (4,1.7) {2};
\node [right, black, ultra thick] at (5,1.7) {1};
\node [right, black, ultra thick] at (7,1.7) {0};

\draw [black, dashed, ultra thick](0,0) --(1,1) -- (2,2) -- (3,3) -- (4,4)-- (5,5);
\draw [black, thin] (3,0)--(7,0);
\draw [black, thin] (2,0)--(2,1)--(7,1);
\draw [black, thin] (0,2)--(3,2)--(3,5);
\draw [black, thin] (0,3)--(7,3);
\draw [black, thin] (0,1)--(0,4)--(2,4)--(2,5);
\draw [black, thin] (5,4)--(7,4);
\foreach \x in {1,4,5,6,7}{
\draw [black, thin] (\x,0)--(\x,5);}
\draw [black, thin] (0,4)--(0,5)--(7,5);
\end{tikzpicture}
\centering
\caption{A path in $\mathcal P$ and a path in $\mathcal P'$}\label{fig}
\end{figure}
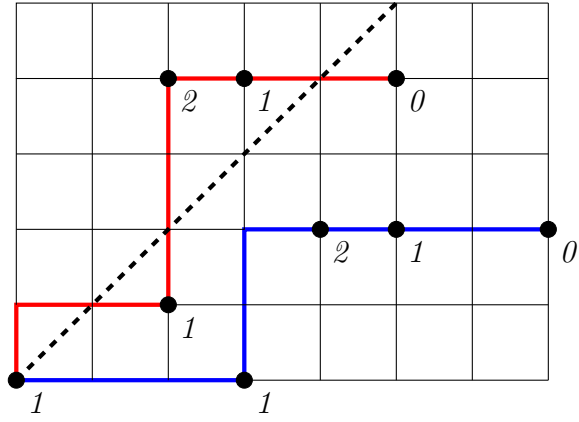

To the red path from $\mathcal P' $ corresponds the pair of binary sequences $U=010111000$, $R=0110001011$.
\end{Example}

\begin{Theorem}\label{lemma:Hformula}
Let $0\le r\le n-3$. Then the following formula holds:
$$H(n,r)=\frac{(2n-r-2)!\cdot n\cdot 2^{r-1}}{(n-r)!(n-r-1)!}$$
\end{Theorem}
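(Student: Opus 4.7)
The plan is to compute $H(n,r)$ as a sum of integrals over the components of $X(C_n)\cap E_1\cap\cdots\cap E_r\cap E_{n-2}$, and then apply the three combinatorial identities proved in Lemmas~\ref{lemma:psi}, \ref{lemma:pathidentity}, and \ref{lemma:multiplyingschurrows} to collapse the sum.

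By Lemma~\ref{lemma:XE1..ErEn-2}, the intersection $X(C_n)\cap E_1\cap\cdots\cap E_r\cap E_{n-2}$ has $n(n-1)\cdots(n-r+1)$ components $Y_{\tilde{\Gamma}}$, indexed by labeled subgraphs $\tilde{\Gamma}\in\tilde{\Sigma}_n$ with $r$ edges; each is birational to $S^2(\mathcal Q_{\tilde{\Gamma}})$, a $\PP^2$-bundle over the Schubert-type subvariety $G_{\tilde{\Gamma}}\subset G(n-2,V)$ whose class equals $\prod_i s_{1,\ldots,1}$ with columns of lengths $\gamma_1,\dots,\gamma_k$. Since $\dim Y_{\tilde{\Gamma}}=2n-2-r$, one has
$$H(n,r)=\sum_{\substack{\tilde{\Gamma}\in\tilde{\Sigma}_n\\|E(\tilde{\Gamma})|=r}}\int_{S^2(\mathcal Q_{\tilde{\Gamma}})}L_{n-1}^{2n-2-r}.$$

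Next I would push $L_{n-1}^{2n-2-r}$ forward through the $\PP^2$-fibration, following the framework of \cite{mateusz2}, so that each integral becomes a pairing on $G(n-2,V)$ of $[G_{\tilde{\Gamma}}]=\prod_i s_{1,\ldots,1}$ against a polynomial in the Chern classes of $\mathcal Q$. Expanding this product of column Schur classes by the Pieri rule as $\sum M(\gamma_1,\dots,\gamma_k;b_1,b_2)\,s_{b_1,b_2}$ (only two-row partitions appear, since $\mathcal Q$ has rank $2$), and observing that the pairing of the pushforward of $L_{n-1}^{2n-2-r}$ with $s_{b_1,b_2}$ is exactly the Lascoux coefficient $\psi_{b_2,b_1}$, one arrives at
$$H(n,r)=c\cdot\sum_{\tilde{\Gamma}}\sum_{b_1\ge b_2}M(\gamma_1,\dots,\gamma_k;b_1,b_2)\,\psi_{b_2,b_1},$$
for an elementary constant $c$ that tracks the $r!$ labelings of each unlabeled graph, the $n$-fold redundancy between $\Sigma_n$ and rotations of $\Sigma'_n$, and the $\tfrac12$ built into Lemma~\ref{lemma:psi}.

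Finally, I would substitute the binomial formula for $\psi_{b_2,b_1}$ from Lemma~\ref{lemma:psi} to introduce an additional summation index $j$; interchange the order of summation and apply Lemma~\ref{lemma:multiplyingschurrows} to collapse the inner sum over $(\tilde{\Gamma},b_1,b_2)$ at fixed $j$ into $\binom{j}{n-1-r}\binom{2n-2-r-j}{n-1-r}$; and then evaluate the remaining $j$-sum by Lemma~\ref{lemma:pathidentity} to $\frac{(2n-2-r)!\,2^r}{(n-r-1)!^2\,r!}$. Combining with $c=\tfrac{n\cdot r!}{2}$ produces the claimed formula $\frac{(2n-r-2)!\,n\,2^{r-1}}{(n-r)!(n-r-1)!}$. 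The main obstacle is the first geometric step, namely identifying the pushforward of $L_{n-1}^{2n-2-r}$ along $S^2(\mathcal Q)\to G(n-2,V)$ and checking that its pairings with two-row Schubert classes are exactly the Lascoux coefficients $\psi_{b_2,b_1}$; once this is established, the remainder is essentially the chaining of the three combinatorial lemmas, each of which is calibrated to collapse one layer of the nested sum.
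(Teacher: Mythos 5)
Your proposal follows essentially the same route as the paper: decompose $X(C_n)\cap E_1\cap\cdots\cap E_r\cap E_{n-2}$ into the $n(n-1)\cdots(n-r+1)$ components of Lemma~\ref{lemma:XE1..ErEn-2}, reduce each integral to a pairing on $G(n-2,V)$ of the product of column Schur classes against the Segre classes of $S^2(\mathcal Q)$ (whose coefficients are the Lascoux numbers, as in \cite{mateusz2}), and then chain Lemmas~\ref{lemma:psi}, \ref{lemma:multiplyingschurrows} and \ref{lemma:pathidentity} in exactly the order the paper does. The one error is your normalizing constant: passing from the sum over all labeled graphs to the sum over $\Sigma'_n$ (graphs avoiding the edge $(1,n)$) contributes a factor $n/(n-r)$, not $n$, since each rotation class of a graph with $r$ edges meets $\Sigma'_n$ in $n-r$ of its $n$ rotations; hence $c=\tfrac{n\cdot r!}{2(n-r)}$, and with your $c=\tfrac{n\cdot r!}{2}$ the computation returns $(n-r)$ times the claimed formula. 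Correcting that factor, your argument coincides with the paper's proof.
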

\begin{proof}
By Lemma~\ref{lemma:XE1..ErEn-2} we know that $X(C_n)\cap E_1\cap\dots\cap E_r\cap E_{n-2}$ has $n(n-1)\dots(n-r+1)$ components which corresponds to graphs $\tilde{\Gamma}\in \tilde{\Sigma}_n$. Thus, we need to just sum up over these components. Let us fix one of them. By Lemma~\ref{lemma:XE1..ErEn-2}, there is a map from this component to $\mathcal Q_{\tilde{\Gamma}}$ which is an isomorphism on an open subset of $\mathcal Q_{\tilde{\Gamma}}$.

We will compute in the Chow ring of Grassmannian $G(n-2,n)$. We know that the class of $G_{\tilde{\Gamma}}$ is given by $s_{1,\dots,1}\dots s_{1,\dots,1}$ where the number of ones are $\gamma_1,\dots,\gamma_k$. 

To simplify the notation, we will transpose every partition in this proof, which will not affect the final result. Thus we will work with partitions inside a $2\times(n-2)$ rectangle, and the class of $G_{\tilde{\Gamma}}$ is given by $s_{\gamma_1}\dots s_{\gamma_k}$. $L_{n-1}$ is just the hyperplane section in the $S^2(\mathcal Q)$. Thus, by intersecting with the power of $L_{n-1}$ we obtain the Segre classes of $S^2(\mathcal Q)$, which are described by Lascoux coefficients $\psi_{b_1,b_2}$, see~\cite{mateusz2}.

Since we are intersecting with general sections of $S^2(\mathcal Q)$ and we are obtaining just points, it is clear that we can assume that these points lie only in the open subset of $\mathcal Q_{\tilde{\Gamma}}$ which is isomorphic to the corresponding component of $X(C_n)\cap E_1\cap\dots\cap E_r\cap E_{n-2}$.

Thus, the intersection in the component corresponding to $\tilde{\Gamma}$ is equal to

$$s_{\gamma_1}\dots s_{\gamma_k}\Seg_{2n-4-r}(S^2(\mathcal Q))=s_{\gamma_1}\dots s_{\gamma_k}\sum_{b_1+b_2=2n-4-r,b_1\ge b_2}s_{b_1,b_2}\psi_{b_2,b_1+1}.$$

Moreover, the complementary partitions get multiplied to $s_{n-2,n-2}$ which is the class of a point, and otherwise the product is 0. Thus, by using Lemma \ref{lemma:psi}, we obtain:

\begin{align*}
    H(n,r)&=\sum_{\substack{\tilde{\Gamma}\in\tilde{\Sigma}_n\\
|E(\tilde{\Gamma})|=r}}\sum_{\substack{b_1\ge b_2\\
b_1+b_2=r}} M(\gamma_1,\dots,\gamma_k;b_1,b_2)\psi_{n-2-b_1,n-1-b_2}\\
&=\sum_{\substack{\Gamma\in\Sigma_n\\
|E(\Gamma)|=r}}r!\sum_{\substack{b_1\ge b_2\\
b_1+b_2=r}} M(\gamma_1,\dots,\gamma_k;b_1,b_2)\cdot \frac{1}{2}\sum_{j=n-1-b_1}^{n-1-b_2}{2n-2-r \choose j}
\end{align*}

When we act with $\sigma\in\ZZ_n\subset\Aut(C_n)=D_n$ on our graph $\Gamma$ the numbers $\gamma_1,\dots,\gamma_k$ do not change. Thus, without loss of generality, we may assume that $\Gamma$ does not contain the edge $(1,n)$ by rotating the graph $\Gamma$ such that it does not contain it. For any graph with $r$ edges there are $n-r$ possibilities how to do it. On the other hand, for any graph $\Gamma\in\Sigma_n$ which does not contain $(1,n)$ there are $n$ graphs which can be rotated to this one. Thus we can change the sum, to sum only through graphs in $\Sigma'_n$, but we have to multiply it by $n/(n-r)$. 
Moreover, we also change the order of summation and sum through $j$ first to obtain by Lemmas~\ref{lemma:multiplyingschurrows} and \ref{lemma:pathidentity} the following:

\begin{align*}
    H(n,r)&=\frac{r!n}{2(n-r)}\sum_{\substack{\Gamma\in\Sigma'_n\\|E(\Gamma)|=r}}\sum_{\substack{b_1\ge b_2\ge 0\\b_1+b_2=r}} M(\gamma_1,\dots,\gamma_k;b_1,b_2)\cdot \sum_{j=n-1-b_1}^{n-1-b_2}{2n-2-r \choose j}\\
    &=\frac{r!n}{2(n-r)}\sum_{j=n-1-r}^{n-1} {2n-2-r \choose j}\sum_{\substack{\Gamma\in\Sigma'_n\\|E(\Gamma)|=r}}\sum_{\substack{b_1\ge b_2\ge 0\\b_1+b_2=r\\b_2\le n-1-j,j-(n-1-r)}} M(\gamma_1,\dots,\gamma_k;b_1,b_2)\\
    &=\frac{r!n}{2(n-r)}\sum_{j=n-1-r}^{n-1} {2n-2-r \choose j}{j\choose n-1-r}{2n-2-r-j\choose n-1-r}\\
    &=\frac{r!n}{2(n-r)}\frac{(2n-2-r)!2^r}{(n-r-1)!(n-r-1)!r!}=\frac{(2n-2-r)!n2^{r-1}}{(n-r)!(n-r-1)!}.
\end{align*}
which proves the theorem. 
\end{proof}

\begin{Theorem}\label{thm:Fformula}
Let $0\le r\le n-2$. Then the following formula holds:
$$F(n,r)=q(n-r)\cdot \frac {2^{r+1}\cdot n\cdot(2n-r-1)!}{(2n-2r)!}$$
\end{Theorem}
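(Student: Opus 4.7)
The strategy is a double induction: an outer induction on $n$ and, for each fixed $n$, a downward induction on $r$ from $r = n-2$ down to $r = 0$, driven by the three-term recurrence of Lemma~\ref{lemma:FH-recurrence} and the closed form for $H(n,r)$ from Theorem~\ref{lemma:Hformula}. The inner base case $r = n-2$ is automatic: by definition $F(n, n-2) = 0$, while the proposed formula evaluates to $q(2) \cdot (\cdots) = 0$ because $q(2) = \binom{4}{2} - 3 \cdot 2 = 0$. An outer base case at $n = 3$ can be checked directly: $L(C_3)$ is the full space of symmetric matrices so $\deg L(C_3)^{-1} = 1 = q(3)$, matching the formula.

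For the inductive step, set $m := n - r$ and write $f(n, r) := q(m) \cdot \frac{2^{r+1} n (2n-r-1)!}{(2n-2r)!}$. By induction hypothesis $F(n-1, r-1) = f(n-1, r-1)$ (outer) and $F(n, r+1) = f(n, r+1)$ (inner). Substituting these together with the explicit value of $H(n, r)$ into the recurrence of Lemma~\ref{lemma:FH-recurrence} and dividing the resulting equation by $f(n,r)$, all factorials and powers of $2$ collapse and the claim reduces to the single arithmetic identity
$$(2m-1)\, q(m) - 4(2m-1)\, q(m-1) = \frac{m-2}{4} \binom{2m}{m}.$$
This reduction is where it is easiest to slip, so I would carry it out in full, paying attention to the boundary cases $r = 0$ (the coefficient $2nr/(n-1)$ kills the absent term $F(n-1,-1)$) and $r = n-3$ (the vanishing $F(n, n-2) = 0$ kills the middle term).

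To establish the displayed identity I substitute $q(m) = \frac{m+2}{4}\binom{2m}{m} - 3 \cdot 2^{2m-3}$. The exponential parts cancel exactly between $q(m)$ and $4 q(m-1)$, leaving
$$q(m) - 4\, q(m-1) = \frac{m+2}{4}\binom{2m}{m} - (m+1)\binom{2m-2}{m-1}.$$
Using the elementary relation $\binom{2m-2}{m-1} = \tfrac{m}{2(2m-1)}\binom{2m}{m}$, the right-hand side simplifies to $\tfrac{m-2}{4(2m-1)}\binom{2m}{m}$, and multiplying through by $(2m-1)$ gives exactly the identity required. Thus the main obstacle is not conceptual but algebraic bookkeeping: verifying the collapse of factorials in the recurrence and recognising the clean two-term identity for $q$. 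Once that identity is in hand, the double induction closes without further input.
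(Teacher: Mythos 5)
Your proposal is correct and follows essentially the same route as the paper: a double induction anchored at $r=n-2$ (where $q(2)=0$), driven by Lemma~\ref{lemma:FH-recurrence} and the closed form for $H(n,r)$, with the same treatment of the boundary cases $r=0$ and $r=n-3$. Your normalization by $f(n,r)$, reducing everything to the identity $(2m-1)\bigl(q(m)-4q(m-1)\bigr)=\frac{m-2}{4}\binom{2m}{m}$ with $m=n-r$, is just a cleaner packaging of the algebraic verification the paper carries out with all factors in place.
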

\begin{proof}
We prove it by induction on $n$ and $r$. For $r=n-2$ we have $F(n,n-2)=0$ by definition and also $q(2)=\frac{2+2}{4}{4\choose 2}-3\cdot 2=0$, thus the theorem holds. 

Now we assume that the statement holds for $(n',r')$ with $n'<n$ or $n'=n, r<r'\le n-3$. We use Lemma \ref{lemma:FH-recurrence}:
\begin{align*}
F(n,r)&=\frac{2nr}{n-1}\cdot F(n-1,r-1)+\frac 1{n-r}F(n,r+1)+\frac{n-r-2}{n-r}H(n,r)\\
&=\frac{2nr}{n-1}q(n-r)\frac {2^{r}\cdot (n-1)\cdot(2n-r-2)!}{(2n-2r)!}+q(n-r-1)\frac {2^{r+2}\cdot n\cdot(2n-r-2)!}{(n-r)(2n-2r-2)!}+\\
&\ +\frac{(n-r-2)(2n-r-2)!\cdot n\cdot 2^{r-1}}{(n-r)!(n-r-1)!(n-r)}\\
&=\frac {2^{r+1}\cdot n\cdot(2n-r-1)!}{(2n-2r)!}\Bigg(q(n-r) \frac{r}{(2n-r-1)}+q(n-r-1)\frac{4(2n-2r-1)}{(2n-r-1)}+\\
&\ +\frac{(n-r-2)(2n-2r)!}{4(2n-r-1)(n-r)!(n-r)!}\Bigg)\\
&=\frac {2^{r+1}\cdot n\cdot(2n-r-1)!}{(2n-2r)!}\Bigg(\bigg(\frac{n-r+2}{4}{2n-2r \choose n-r}-3\cdot 2^{2n-2r-3}\bigg)\frac{r}{(2n-r-1)}+\\
&\ + \bigg(\frac{n-r+1}{4}{2n-2r-2 \choose n-r-1}-3\cdot 2^{2n-2r-5}\bigg)\frac{4(2n-2r-1)}{(2n-r-1)}+{2n-2r \choose n-i}\frac{(n-r-2)}{4(2n-r-1)}\Bigg)\\
&=\frac {2^{r+1}\cdot n\cdot(2n-r-1)!}{(2n-2r)!}\Bigg(-3\cdot 2^{2n-2r-3}\cdot\bigg(\frac{r}{(2n-r-1)}+\frac{(2n-2r-1)}{(2n-r-1)}\bigg)+\\
&\ +\frac 14 {2n-2r\choose n-r} \bigg(\frac{r(n-r+2)}{2n-r-1}+\frac{2(n-r+1)(n-r)}{2n-r-1}+\frac{n-r-2}{2n-r-1} \bigg)\Bigg)\\
&=\frac {2^{r+1}\cdot n\cdot(2n-r-1)!}{(2n-2r)!}\Bigg(-3\cdot 2^{2n-2r-3}\cdot\frac{r+2n-2r-1}{2n-r-1}+\\
&\ +\frac 14 {2n-2r\choose n-r} \bigg(\frac{nr-r^2+2r+2n^2-4nr+2r^2+2n-2r+n-r-2}{2n-r-1}\bigg)\Bigg)\\
&=\frac {2^{r+1}\cdot n\cdot(2n-r-1)!}{(2n-2r)!}\Bigg(-3\cdot 2^{2n-2r-3}+\frac 14 {2n-2r\choose n-r}\frac{(n-r+2)(2n-r-1)}{(2n-r-1)}\Bigg)\\
&=q(n-r)\cdot \frac {2^{r+1}\cdot n\cdot(2n-r-1)!}{(2n-2r)!}.
\end{align*}
Note that in the case $r=0$, the term $\frac{2nr}{n-1}\cdot F(n-1,r-1)=0$ and it remains 0 when we plug in anything for $F(n-1,r-1)$. Thus, we may plug in the expression from the inductive hypothesis even though it is not true (formula for $r=-1$ does not return zero).
\end{proof}

From the Theorem \ref{thm:Fformula} we can easily conclude the Theorem~\ref{variety}:
\begin{proof}[Proof of Theorem \ref{variety}]
By Lemma \ref{CQ_formula}, we have that $\deg(L(C_n)^{-1})=[X(C_n)]L_{n-1}^{2n-1}$. By Theorem \ref{thm:Fformula}, we obtain 
\[\deg\Big(L(C_n)^{-1}\Big)=F(n,0)=q(n)\frac {2n\cdot(2n-1)!}{(2n)!}=q(n)=\frac{n+2}{4}{2n \choose n}-3\cdot 2^{2n-3}.\]

\end{proof}

\end{document}